\definecolor{orangebis}{rgb}{0.99,0.25,0.00}
\definecolor{greenbis}{rgb}{0.10,0.85,0.10}
\definecolor{bluebis}{rgb}{0.10,0.30,0.99}
\author{Christophe Garban\thanks{Universit\'e Lyon 1. 
Supported by the ANR grant Liouville 15-CE40-0013 and the ERC grant LiKo 676999
} 
 \and Hugo Vanneuville\footnotemark[1]}
\title{Exceptional times for percolation under exclusion dynamics}
\date{}
\theoremstyle{plain}
\newtheorem{thm}{Theorem}[section]
\newtheorem{prop}[thm]{Proposition}
\newtheorem{lem}[thm]{Lemma}
\newtheorem{conj}[thm]{Conjecture}
\newtheorem{cor}[thm]{Corollary}
\theoremstyle{definition}
\newtheorem{defi}[thm]{Definition}
\newtheorem{definition}[thm]{Definition}
\theoremstyle{remark}
\newtheorem{rem}[thm]{Remark}
\newtheorem{remark}[thm]{Remark}
\newcommand{\margin}[1]{\textcolor{magenta}{*}\marginpar[\textcolor{magenta} {  \raggedleft  \footnotesize  #1 }  ]{ \textcolor{magenta} { \raggedright  \footnotesize  #1 }  }}
\renewcommand{\margin}[1]{}
\newcommand{\N}{\mathbb{N}}
\newcommand{\R}{\mathbb{R}}
\newcommand{\Z}{\mathbb{Z}}
\newcommand{\Q}{\mathbb{Q}}
\newcommand{\half}{\mathbb{H}}
\newcommand{\Pro}{\mathbb{P}}
\newcommand{\E}{\mathbb{E}}
\newcommand{\T}{\mathbb{T}}
\renewcommand{\Q}{\mathbb{Q}}
\renewcommand{\Z}{\mathbb{Z}}
\renewcommand{\N}{\mathbb{N}}
\def\H{\mathbb{H}}
\def\T{\mathbb{T}}
\newcommand{\un}{\mathds{1}}
\newcommand{\petito}[1]{o\mathopen{}\left(#1\right)}
\newcommand{\grandO}[1]{O\mathopen{}\left(#1\right)}
\newcommand{\cond}{\, \Big| \,}
\renewcommand{\textbf}[1]{\begingroup\bfseries\mathversion{bold}#1\endgroup}
\let\qed=\QED
\def\calI{\mathcal{I}}
\def\calS{\mathcal{S}}
\def\SLE{\mathrm{SLE}}
\def\E{\mathbb{E}} 
\def\md{\mid}
\def \eps {\epsilon}
\def\Bb#1#2{{\def\md{\bigm| }#1\bigl[#2\bigr]}}
\def\Eb{\Bb\E}
\def\<#1{\langle #1\rangle}
\def\nn{\nonumber}
\def\bi{\begin{itemize}}  
\def\ei{\end{itemize}}
\def\bnum{\begin{enumerate}} 
\def\enum{\end{enumerate}}
\def\ni{\noindent}
\def\bf{\bfseries}
\numberwithin{equation}{section}
\begin{document}

\maketitle

\abstract{We analyse in this paper a conservative analogue of the celebrated model of \textbf{dynamical percolation} introduced by H{\"a}ggstr{\"o}m, Peres and Steif in \cite{olle1997dynamical}. 
It is simply defined as follows: start with an initial percolation configuration $\omega(t=0)$. Let this configuration evolve in time according to a simple exclusion process with symmetric kernel $K(x,y)$. 
We start with a general investigation (following \cite{olle1997dynamical}) of this dynamical process $t\mapsto \omega_K(t)$ which we call \textbf{$K$-exclusion dynamical percolation}. We then proceed with a detailed analysis of the planar case at the critical point (both for the triangular grid and the square lattice $\Z^2$) where we consider the power-law kernels $K^\alpha$
\[
K^{\alpha}(x,y) \propto \frac 1 {\|x-y\|_2^{2+\alpha}}\,.
\]
We prove that if $\alpha>0$ is chosen small enough,
there exist \textbf{exceptional times} $t$ for which an infinite cluster appears in $\omega_{K^{\alpha}}(t)$. (On the triangular grid, we prove that this holds for all $\alpha<\alpha_0= \frac {217}{816}$.) The existence of such exceptional times for standard i.i.d. dynamical percolation (where sites evolve according to independent Poisson point processes) goes back to the work by Schramm-Steif in \cite{SS10}. In order to handle such a $K$-exclusion dynamics, we push further the spectral analysis of \textbf{exclusion noise sensitivity} which has been initiated in \cite{BGS}. (The latter paper can be viewed as a conservative analogue of the seminal paper by Benjamini-Kalai-Schramm \cite{BKS} on i.i.d. noise sensitivity.) 
The case of a nearest-neighbour simple exclusion process, corresponding to the limiting case $\alpha=+\infty$, is left widely open.}

\tableofcontents

\section{Introduction}

\subsection{Dynamical percolation}\label{sectioniid}

We consider bond percolation on an infinite, countable, connected, locally finite graph $G=(V,E)$. 
We write $\Pro_p$ for the probability measure of (bond) \textbf{percolation of parameter~$p$} on $G$ i.e. the probability measure on $\Omega = \lbrace -1,1 \rbrace^{E}$ obtained by declaring each edge \textbf{open} with probability $p$ and \textbf{closed} with probability $1-p$, independently of the others ($1$ means open and $-1$ means closed). More formally, $\Pro_p$ is the product measure $\left( p\delta_1 + (1-p)\delta_{-1} \right)^{\otimes E}$ on $\Omega$ equipped with the product $\sigma$-algebra. An element $\omega \in \Omega$ is called a \textbf{percolation configuration}. Moreover, a connected component of the graph obtained by keeping only the open edges is called a \textbf{cluster}. It is a simple consequence of Kolmogorov's $0$-$1$ law that, for each $p$, $\Pro_p \left[ \exists \text{ an infinite cluster} \right] \in \lbrace 0,1 \rbrace$. Moreover, it is well known (see for instance~\cite{grimmett1999percolation} or~\cite{bollobas2006percolation}) that there exists a \textbf{critical point} $p_c = p_c(G) \in [0,1]$ such that:
\begin{eqnarray*}
\forall p \in [0,p_c), \, \Pro_p \left[ \exists \text{ an infinite cluster} \right] = 0 \, ,\\
\forall p \in (p_c,1], \, \Pro_p \left[ \exists \text{ an infinite cluster} \right] = 1 \, .
\end{eqnarray*}

The most studied model is bond percolation on the Euclidean lattice $\Z^d$, $d \geq 2$. 
For this model, it is known that $p_c = p_c(d) \in (0,1)$. In other words, there exists a phase transition. Moreover, it is a celebrated theorem by Kesten \cite{kesten1980critical} that $p_c(2)=1/2$ and it is conjectured that, for any $d \geq 2$, $\Pro_{p_c} \left[ \exists \text{ an infinite cluster} \right] = 0$. This last property has been proved for $d=2$ (\cite{harris1960lower})  and $d \geq 11$ (see \cite{hara1994mean,fitzner2015nearest}).
\smallskip

In \cite{olle1997dynamical}, H{\"a}ggstr{\"o}m, Peres and Steif define and study the model of \textbf{dynamical} (bond) \textbf{percolation} (this model was invented independently by Benjamini). Dynamical percolation of parameter $p \in [0,1]$ is defined very easily as follows: we sample a percolation configuration $\omega(0)$ according to some initial law and we then let evolve each edge independently of each other according to Poisson point processes: at rate one, the states of edges are resampled using $p\delta_1 + (1-p)\delta_{-1}$. 
We obtain this way a c\`adl\`ag Markov process $\left( \omega(t) \right)_{t \geq 0}$ on the space $\Omega$ (seen as the compact metric product space) with $\Pro_p$ as (unique) invariant probability measure. The main question is whether, if $\omega(0) \sim \Pro_p$\footnote{Where $X \sim P$ means that $P$ is the distribution of the random variable $X$.}, there exist \textbf{exceptional times} for which the percolation configuration is very atypical.  Exceptional times are defined as follows: if $\Pro_p \left[ \exists \text{ an infinite cluster} \right] = 0$, then an exceptional time is a time for which there is an infinite cluster. On the other hand, if $\Pro_p \left[ \exists \text{ an infinite cluster} \right] = 1$, then it is a time for which there is no infinite cluster.

From now on, we assume that $\omega(0) \sim \Pro_p$. Since $\Pro_p$ is an invariant measure, then (by Fubini) a.s. $\text{Leb}$-a.e. there is no exceptional time (where $\text{Leb}$ is the Lebesgue measure on $\R_+$). This does not imply that a.s. there does not exist any exceptional time. However, this is the case away from the critical point: the authors of~\cite{olle1997dynamical} have proved that, for any graph $G$, if $p \neq p_c$ then a.s. there is no exceptional time (see their Proposition~$1.1$).

The case $p=p_c$ is in general much more difficult. First, let us note that, for bond percolation on the Euclidean lattice $\Z^d$, this is for now interesting only for $d = 2$ and $d \geq 11$ since these are the only dimensions for which we know what happens at criticality. For $d \geq 11$, thanks to a result proved in~\cite{hara1994mean} for $d \geq 19$ (and extended very recently to $d \ge 11$ in~\cite{fitzner2015nearest}), the authors of \cite{olle1997dynamical} have proved that, even at criticality, a.s. there is no exceptional time (see their Theorem~$1.3$). However, for $d=2$, the following is proved in~\cite{GPS} (Theorem~$1.4$):
\[
\text{For dynamical bond percolation on } \Z^2, \text{ a.s. there are exceptional times if } p=p_c=1/2.
\]

Such a result had been proved earlier in \cite{SS10} for the model of \textbf{site percolation on the triangular lattice}. Let $\T$ denote the (planar) triangular lattice and let $\Pro_p$ denote the probability measure of site percolation on $\T$ (this is the analogous model where the sites - i.e. the vertices of $\T$ - are open or closed; in this context a cluster is a connected component of the graph obtained by keeping only the open sites). Kesten's work also implies that $p_c=1/2$ for this model. Of course, one can define dynamical site percolation on $\T$ in the same way as for dynamical bond percolation i.e. by associating exponential clocks to the sites of $\T$. 
Much more is known for site percolation on $\T$ than for bond percolation on $\Z^2$. Indeed conformal invariance (as the mesh goes to zero) has been proved by Smirnov in \cite{smirnov2001criticalp}, and the exact value of several critical exponents (see Subsection \ref{sectionarm}) has been derived in \cite{lawler2002onearm, smirnov2001critical} using the \textbf{Schramm Loewner Evolution (SLE)} processes introduced by Schramm.
Using the knowledge of these critical exponents,  the following is proved in~\cite{SS10} (Theorem~$1.3$):
\[
\text{For dynamical site percolation on } \T, \text{ a.s. there are exceptional times if } p=p_c=1/2.
\]

Finally, let us mention that in \cite{GPS} it is shown that, for critical site percolation on $\T$, the Hausdorff dimension of the set of exceptional times is a.s. $31/36$. For other results, see for instance~\cite{HPS15} where the authors show that typical exceptional times are intimately related to the so-called Incipient Infinite Cluster introduced by Kesten.

In both \cite{SS10} and \cite{GPS}, the main methods are related to the theory of \textbf{Fourier decomposition of Boolean functions}. In the present paper, we will also rely extensively on such tools, see Subsections~\ref{ss.spectral} and ~\ref{ss.SC}.

\subsection{Percolation 
 under exclusion dynamics}\label{sectionexclu}

We study in this paper the same question of existence of exceptional times but with a different underlying dynamical process: we let the configuration evolve according to a \textbf{symmetric exclusion process}. Percolation evolving according to an exclusion process has already been studied by Broman, the first author and Steif in~\cite{BGS} where the authors introduce and study the notion of \textbf{exclusion sensitivity}. (We will say more about this notion in Section~\ref{s.back}, see also~\cite{For15a, For15b}.) To define and study a symmetric exclusion process (which is a Feller Markov process), the most efficient way is to rely on its infinitesimal generator, see~\cite{liggett2005interacting}. However, we will sometimes need to use a more explicit construction of this dynamics, usually called a \textbf{graphical construction} after \cite{Harris}. We provide such a construction in Appendix \ref{a.graphical}. 
\begin{defi}\label{d.exclusion}
Consider a symmetric transition matrix $K$ on the set of edges $E$. Sample a percolation configuration $\omega_K(0)$ according to some initial law. To each pair of edges $\lbrace e,f \rbrace$, associate an exponential clock of parameter $K(e,f)=K(f,e)$  independent of the others and $\omega_K(0)$.
When the clock of a pair $\lbrace e,f \rbrace$ rings, exchange the states of the two edges. This way, we obtain a  càdlàg Markov process $\left( \omega_K(t) \right)_{t \geq 0}$ on the space $\Omega$ that is called a $K$-(symmetric) exclusion process. For every $p \in [0,1]$, $\Pro_p$ is an invariant measure for this process. In the following, we will always consider the case $\omega_K(0) \sim \Pro_p$  for some $p \in [0,1]$, and we will call the corresponding process a \textbf{$K$-exclusion dynamical percolation of parameter~$p$}. (Of course, a similar definition holds for a dynamics on site-configurations.)
\end{defi}

For more clarity, we call the dynamical percolation process of~\cite{olle1997dynamical} (defined in Subsection~\ref{sectioniid}) \textbf{i.i.d. dynamical percolation} (indeed, in this process, the states of the edges evolve independently of each other and according to the same law).


Following \cite{BGS}, our main motivation in this paper is guided by the following observation. For an exclusion process of parameter~$p$ which starts at equilibrium, one has $\omega_K(t) \sim \Pro_p$ for all $t\geq 0$. As such, one may ask the same natural  questions as for the i.i.d. dynamical process. In particular, we define exceptional times exactly in the same way. We shall consider in this article the following families of symmetric transition kernels $K$.

\begin{definition}[Symmetric transition kernels $K$ considered in this work]\label{d.kernels} $ $ 
\bnum
\item First, we shall analyse what happens away from the critical point $p_c=p_c(G)$ for any graph $G$ and at $p_c$ for percolation on $\Z^d$ with $d \geq 11$ (see Propositions \ref{outsidepc} and \ref{dgeq11}). The proofs in these cases are very close to the i.i.d. setting and work for \textbf{any} symmetric kernel $K$. 

\item Then, we focus on what happens at the critical point in the planar setting (critical bond percolation on $\Z^2$ or critical site percolation on $\T$). The most natural and most studied conservative dynamics in this case is certainly the \textbf{nearest-neighbour simple exclusion process} which on the triangular lattice $\T$ corresponds to the following kernel:
\[
K(v,w):= \frac 1 6 \un_{v\sim w} \; .
\]
(For bond percolation on $\Z^2$, one may consider several natural versions of nearest-neigbour exclusion process acting on edges.) 
As we shall explain later, we are far from being able to prove the existence of exceptional times (even on the triangular lattice $\T$) for this classical dynamics. This is why we consider the following kernels which in some sense interpolate between the i.i.d. case and the nearest-neighbour dynamics.

\item The main class of dynamics that we will analyse are the following \textbf{power-law kernels}. For any $\alpha\in (0,+\infty)$, let 
\[
\begin{cases}
K^\alpha(v,v) & := 0  \\
K^\alpha(v,w) & := c_\alpha\, \frac 1 {\|v-w\|_2^{2+\alpha} } \text{  if }v \neq w \, ,
\end{cases}
\]
where $c_\alpha$ is a normalization factor so that $K^\alpha$ is a transition kernel (i.e. $\sum_w K^\alpha(v,w)=1$). For bond percolation on $\Z^2$, one measures the Euclidean distance between two edges as the distance between their mid-points. 
The shape of the decay (in $r^{-(2+\alpha)}$) is chosen here in such a way that particles move at large scales according to  \textbf{$\alpha$-stable} processes. Note that by letting $\alpha \searrow 0$, one recovers in some sense an i.i.d. dynamics, while $\alpha \to +\infty$ converges to the nearest-neighbour simple exclusion process. 

\item Finally, the last family of kernels that we shall investigate are the following ones which are designed to be \textbf{super-heavy-tailed} (or in other words, they induce a very long-range exclusion process). The reason to consider these long-range dynamics is that the spectral analysis will be much simpler in this case than for the power-law kernels $K^\alpha$. Consider for any $a\in (0,+\infty)$,
\[
\begin{cases}
K_{\log}^a(v,v) & := 0  \\
K_{\log}^a(v,w) & := c_a \frac 1 {\|v-w\|_2^{2} \, \log(\|v-w\|_2+1)^{1+a} } \text{  if }v \neq w \, .
\end{cases}
\]
\enum
\end{definition}

\ni
We now list the main results proved in this paper. 

\subsection{Main results}\label{ss.results}

Our first two results are the direct analogues for exclusion dynamics of the main results on i.i.d. dynamics proved in the seminal paper on dynamical percolation \cite{olle1997dynamical}. The proofs follow very closely the ideas from \cite{olle1997dynamical} and do not require any assumption on the symmetric kernel $K$. 


\begin{prop}\label{outsidepc}
For any graph $G$ and any symmetric transition matrix $K$ on the edges of $G$, if $p \neq p_c$ then a.s. there is no exceptional time for the $K$-exclusion dynamical percolation of parameter~$p$. (This result is also true for site percolation and the proof is the same.)
\end{prop}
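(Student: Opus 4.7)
The strategy is to follow the proof of Proposition 1.1 in \cite{olle1997dynamical}; the only property of the exclusion dynamics entering the proof is that each single edge flips at rate at most $\sum_f K(e,f) = 1$, which is automatic from the stochasticity of $K$. Fix a time horizon $T > 0$; it is enough to rule out exceptional times in $[0, T]$.

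Preliminary observation: since the single-edge flip rate is bounded by $1$, for any finite edge set $F$ the number of $F$-flips in $[0,T]$ is a.s.\ finite. Combined with the c\`adl\`ag property of $t\mapsto \omega_K(t)$, this implies that for any \emph{local} event $A \subseteq \Omega$ the set $\{t \in [0,T] : \omega_K(t) \in A\}$ is a.s.\ a finite union of half-open intervals of positive length, and in particular is empty iff it has Lebesgue measure zero.

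Focus on the subcritical case $p < p_c$; the supercritical case reduces to it via planar duality on $\Z^2$ and $\T$, and in general by running the same scheme on the matching/closed-cluster event together with Burton--Keane uniqueness of the infinite cluster. Fix a vertex $v$ and set
\[
U_n := \bigl\{t \in [0,T] : v \leftrightarrow \partial B_n(v) \text{ in } \omega_K(t)\bigr\}.
\]
Any $t$ at which $v \leftrightarrow \infty$ in $\omega_K(t)$ belongs to $\bigcap_n U_n$, so a countable union over $v$ reduces the task to showing $\Pro[U_n\neq\emptyset]\to 0$ for each fixed $v$. The event $\{U_n\neq\emptyset\}$ happens either because $v \leftrightarrow \partial B_n(v)$ already at time $0$ (probability $\Pro_p[v\leftrightarrow \partial B_n(v)]$), or because some swap in $(0,T]$ involving a $B_n(v)$-edge creates the connection. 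Since $\Pro_p$ is invariant under each swap, the post-swap marginal is again $\Pro_p$; hence (using the bound $1$ on the single-edge flip rate) the expected number of such swaps is at most $T\,|E(B_n(v))|\cdot \Pro_p[v\leftrightarrow \partial B_n(v)]$. Markov's inequality then yields
\[
\Pro[U_n\neq\emptyset] \leq \bigl(1 + T\,|E(B_n(v))|\bigr)\,\Pro_p[v\leftrightarrow \partial B_n(v)],
\]
which tends to $0$ on $\Z^d$ and $\T$ as $n\to\infty$, since $|E(B_n(v))|$ grows polynomially while $\Pro_p[v\leftrightarrow \partial B_n(v)]$ decays exponentially by sharpness of the subcritical phase.

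The only genuine obstacle is this last quantitative decay step, $|E(B_n(v))|\cdot\Pro_p[v\leftrightarrow \partial B_n(v)]\to 0$; for a completely general graph without exponential subcritical decay one may need the slightly more delicate variant from \cite{olle1997dynamical}. All other ingredients rely only on the uniform bound $1$ on the single-edge flip rate and on the fact that each exclusion swap preserves $\Pro_p$---both automatic from the definition of $K$-exclusion dynamical percolation.
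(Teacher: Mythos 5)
Your argument is a genuinely different route from the paper's, and it contains a gap that makes it fall short of the stated proposition.

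The paper's proof adapts the H\"aggstr\"om--Peres--Steif idea of stochastic domination over a short time window. Concretely, it looks at the configuration $\omega_K^{(\epsilon)}$ obtained by declaring open every edge that has been touched in $[0,\epsilon]$, and shows (Lemma~\ref{increaslem}) that for every finite conditioning the conditional probability of being open is at most $p+\tfrac{1-p}{p}\epsilon$; combined with the abstract domination statement (Lemma~\ref{increas}) this gives $\omega_K^{(\epsilon)} \preceq \Pro_{p_0}$ for some $p_0<p_c$, hence no infinite cluster in any $\omega_K(t)$, $t\in[0,\epsilon]$. This works for \emph{every} graph, without any quantitative estimate on how fast the one-arm probability decays; the only input from static percolation is $\Pro_{p_0}[\exists\ \text{infinite cluster}]=0$ for $p_0<p_c$.

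Your scheme replaces this by a first-moment bound: you estimate $\Pro[U_n\neq\emptyset]$ by $\bigl(1+T\,|E(B_n(v))|\bigr)\Pro_p[v\leftrightarrow\partial B_n(v)]$ and send $n\to\infty$. Even granting the (glossed-over) Palm-calculus step that the post-swap configuration at a swap time is $\Pro_p$-distributed, the final limit genuinely fails for general graphs. The proposition is stated for \emph{any} graph, but on a $(d{+}1)$-regular tree one has $|E(B_n(v))| \asymp d^{n}$ while $\Pro_p[v\leftrightarrow\partial B_n(v)] \asymp (dp)^{n}$ for $p<p_c=1/d$, so the product behaves like $(d^2p)^n$ and diverges throughout $p\in(1/d^2,1/d)$. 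Your closing caveat attributes the difficulty to graphs ``without exponential subcritical decay,'' but that is not the obstruction: the decay is exponential on the tree, it is simply outpaced by volume growth. This is exactly the kind of quantitative dependence the domination argument is designed to avoid. Separately, your treatment of the supercritical case is too loose: planar duality is unavailable on a general graph, and it is not clear how ``running the same scheme on the matching/closed-cluster event together with Burton--Keane'' would control the event that the infinite cluster momentarily disappears. The paper handles $p>p_c$ symmetrically, by dominating \emph{from below} the configuration in which every touched edge is declared closed, which again needs nothing beyond the static dichotomy at $p_c$. In short: the structure of your argument is sound for lattices like $\Z^d$ and $\T$, but to prove the proposition as stated you should switch to the short-time stochastic domination route, which is what Lemmas~\ref{increas} and \ref{increaslem} are for.
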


\begin{prop}\label{dgeq11}
Let $d \geq 11$.\footnote{This estimate $d\geq 11$ follows from the recent strengthening \cite{fitzner2015nearest} of \cite{hara1994mean} which would have given $d\geq 19$ instead.} Then, for any symmetric transition matrix $K$ on the edges of the Euclidean lattice $\Z^d$, a.s. there is no exceptional time for the $K$-exclusion dynamical percolation of parameter~$p$ even if $p = p_c = p_c(d)$.
\end{prop}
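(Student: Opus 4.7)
The plan is to reduce to the i.i.d.\ argument in the proof of Theorem~$1.3$ of \cite{olle1997dynamical}. The key observation is that, although coordinates do not evolve independently under $K$-exclusion, the marginal rate at which any single edge $e$ sees its state change is $\sum_f K(e,f)=1$, independently of $K$ and~$e$; hence each edge is updated at the same (Poissonian) marginal rate as in the i.i.d.\ dynamics of \cite{olle1997dynamical}. Using the standard c\`adl\`ag and compactness reduction, it is enough to show that for the origin $0\in\Z^d$,
\[
\lim_{n\to\infty}\Pro\bigl[\exists\, t\in[0,1]\colon 0\leftrightarrow\partial B_n\text{ in }\omega_K(t)\bigr]=0.
\]

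Write $A_n:=\{0\leftrightarrow\partial B_n\}$ and let $N_n$ be the number of transitions of $t\mapsto A_n(\omega_K(t))$ on $(0,1]$. A first-moment / union bound gives
\[
\Pro\bigl[\exists\, t\in[0,1]\colon A_n(\omega_K(t))\bigr]\leq \Pro_{p_c}[A_n]+\E[N_n].
\]
A transition occurs only when the clock of some pair $\{e,f\}$ rings with $\omega(e)\neq\omega(f)$ and the swap $\sigma_{e,f}$ flips $A_n$; by stationarity of $\Pro_{p_c}$ under the dynamics,
\[
\E[N_n]\;=\;\sum_{\{e,f\}}K(e,f)\,\Pro_{p_c}\bigl[A_n(\omega)\neq A_n(\sigma_{e,f}\omega)\bigr].
\]
The crucial reduction is to bound each summand by a single-edge pivotal probability: if the swap flips $A_n$, telescoping through the intermediate configuration $\omega^e$ and using monotonicity of $A_n$ forces $e\in\mathrm{Piv}(A_n,\omega)$ or $f\in\mathrm{Piv}(A_n,\omega^e)$. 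Together with $\sum_f K(e,f)=1$ and the uniformly bounded Radon--Nikodym derivatives between the laws of $\omega$, $\omega^e$, $\omega^f$ under $\Pro_{p_c}$ (which uses $p_c(d)\in(0,1)$), this collapses the double sum to
\[
\E[N_n]\;\leq\; C(p_c)\,\E_{p_c}\bigl[|\mathrm{Piv}(A_n)|\bigr],
\]
with $C(p_c)$ independent of $K$ and $n$. This is, up to a constant, exactly the estimate controlling $\E[N_n]$ for the i.i.d.\ dynamics in the proof of Theorem~$1.3$ of \cite{olle1997dynamical}.

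The final step is then a direct appeal to the mean-field input underlying that proof: for $d\geq 11$, the triangle condition proved in \cite{hara1994mean} and extended in \cite{fitzner2015nearest} yields both $\Pro_{p_c}[A_n]\to 0$ and the required smallness as $n\to\infty$ of the pivotal first moment governing $\E[N_n]$. The main obstacle is therefore the pivotal reduction above: one must verify carefully that a pivotal swap always forces a genuine single-edge pivotal in a Hamming-distance-one perturbation of $\omega$, and that the marginal-rate identity $\sum_f K(e,f)=1$ is the only property of $K$ needed to absorb the double summation into a single-edge sum. Once this clean reduction is carried out, the remainder is a verbatim transcription of the high-dimensional argument of \cite{olle1997dynamical}, and no new percolation input is required.
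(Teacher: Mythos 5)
Your proposal has a genuine gap at the final step. You reduce to showing $\E[N_n]\to 0$, where $N_n$ is the number of transitions of $t\mapsto A_n(\omega_K(t))=\un\{0\leftrightarrow\partial B_n\}$, and the pivotal reduction you carry out (which is itself correct) shows $\E[N_n]\asymp \E_{p_c}\bigl[|\mathrm{Piv}(A_n)|\bigr]$, uniformly over all symmetric $K$. The problem is that $\E_{p_c}[|\mathrm{Piv}(A_n)|]$ does \emph{not} go to $0$ as $n\to\infty$; by Margulis--Russo it equals $\frac{d}{dp}\Pro_p[A_n]\big|_{p=p_c}$, and since $\Pro_p[A_n]\to\theta(p)$ with $\theta'(p_c^+)\in(0,\infty)$ in high dimensions (this is precisely the content of $\beta=1$), this pivotal first moment tends to a strictly positive constant. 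So your first-moment bound $\Pro[\exists t\in[0,1]:A_n]\le\Pro_{p_c}[A_n]+\E[N_n]$ stalls at a constant and cannot give the desired $\lim_n\Pro[\exists t:A_n]=0$. (As a sanity check: the same transition-count bound in the i.i.d.\ case would also only give $\E[N_n]\asymp 1$, so it cannot be ``exactly the estimate controlling $\E[N_n]$'' in HPS Theorem~1.3 — HPS do not run a transition-count argument at all.)

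The paper's proof is structurally different and avoids this obstacle entirely. It does not try to show $\Pro[\exists t:0\leftrightarrow\infty]=0$ directly by a first moment. Instead it (i) invokes the reversibility of the exclusion process (here symmetry of $K$ is essential) to get, via Lemma~2.3 of~\cite{peres1998number}, the dichotomy $\E[N(v)]\in\{0,+\infty\}$ for $N(v)=\#\{t\in[0,1]:v\leftrightarrow\infty\}$, so that it suffices to prove $\E[N(v)]<\infty$; and (ii) bounds $\Pro[\exists t\in[0,\epsilon]:v\leftrightarrow\infty]\le\Pro_{p_c+C'\epsilon}[v\leftrightarrow\infty]\le CC'\epsilon$ via a stochastic domination argument: turn on every edge that is touched during $[0,\epsilon]$ (this can only increase the event), and show via Lemmas~\ref{increas} and~\ref{increaslem} that the resulting configuration is stochastically dominated by $\Pro_{p_c+C'\epsilon}$. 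Summing over the $m$ intervals of a mesh of size $\epsilon=1/m$ and applying Fatou gives $\E[N(v)]<\infty$. Notice that this argument uses only the monotonicity of $\{v\leftrightarrow\infty\}$ together with the one-sided conditional-probability bound from Lemma~\ref{increaslem}; it never touches pivotal edges. If you want to salvage a pivotal/transition-style argument, you would have to first establish something like the $0$--$\infty$ dichotomy for the number of exceptional times, and then argue about the number of hitting times of $\{v\leftrightarrow\infty\}$ rather than the number of transitions of a finite-volume approximation.
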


Let us now state the main theorem of this paper (which answers a question that motivated~\cite{BGS}).


\begin{thm}\label{tropbien}
Let $K^\alpha$ be the transition matrix from Definition \ref{d.kernels} on the edges of $\Z^2$ or on the sites of $\T$. If $\alpha>0$ is sufficiently small, then a.s. there exist exceptional times for the $K^\alpha$-exclusion dynamical percolation of parameter $p=p_c=1/2$.

Moreover, in the case of dynamics on the sites of $\T$, one has the following explicit lower-bound (as a function of $\alpha$) on the Hausdorff dimension of exceptional times. Let 
\begin{align*}
d(\alpha) := 1 - \frac 5 {36}  \left(1 - \frac {68} {21} \alpha \right)^{-1} \,.
\end{align*}
The Hausdorff dimension of the set of exceptional times of a $K^\alpha$-exclusion dynamical percolation of parameter $p_c= 1/2$ is an a.s. constant that lies in $[d(\alpha),31/36]$. See Figure \ref{f.main} for a plot of this estimate. 
In particular, we see that we obtain the existence of exceptional times for any $\alpha < \alpha_0= 217/816$.\footnote{The proof we will need for $\Z^2$ implies that one can go in fact slightly above this threshold $\alpha_0$. See the upper-bound given by~\eqref{e.alphamax}.} Note also that our lower-bound $d(\alpha)$ converges to $31/36$ as $\alpha \searrow 0$, which is known to be the Hausdorff dimension of exceptional times for the \textbf{i.i.d. dynamical percolation} (\cite{GPS}).
\end{thm}

\begin{figure}[!htp]
\begin{center}
\includegraphics[width=0.7\textwidth]{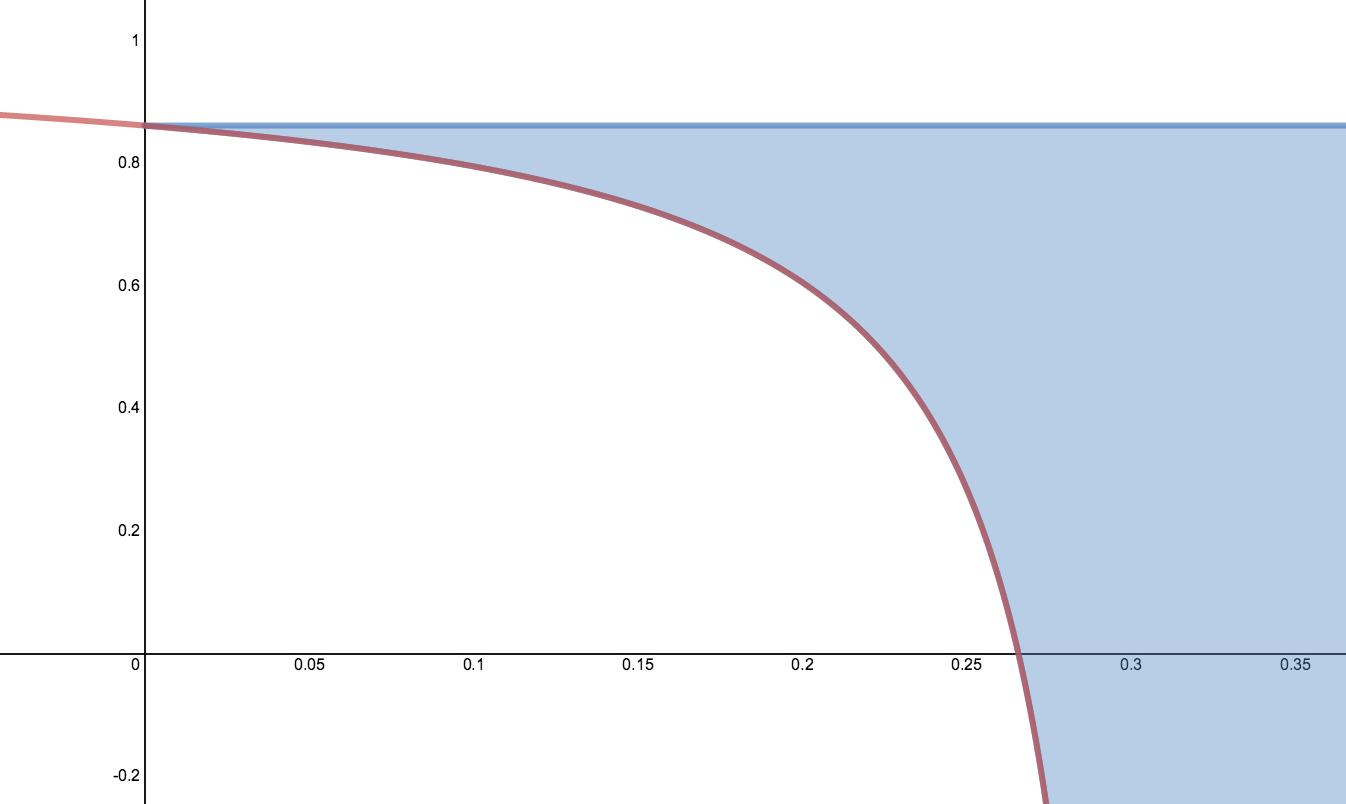}
\end{center}
\caption{The red curve represents the lower-bound $\alpha \mapsto d(\alpha)$ we obtain in Theorem \ref{tropbien} on the Hausdorff dimension of exceptional times for the power law kernels $K^\alpha(x,y) \propto \|x-y\|^{-(2+\alpha)}$. 
 As we see from this plot, we obtain in particular the existence of exceptional times for all $\alpha < \alpha_0 = 217/816 \approx 0.266$.
 The blue curve on the top is the (much easier to obtain) upper-bound equal to $31/36$.
 We conjecture that the dimension is a.s. equal to this value of $31/36$ for any $\alpha \in (0,+\infty]$ where $``\alpha=+\infty"$ would be the nearest-neighbour case.}\label{f.main}
\end{figure}

Theorem~\ref{tropbien} will be proved in Section~\ref{sectionexcepcriticality} (thanks to a result on a clustering effect for spectral sets of percolation whose proof will be postponed to Section~\ref{sectionspectralnotlocalized}). The proof will deeply use the fact that the dynamics we consider are not very localized.
With this in mind, it is not surprising that the proof is simpler (and gives a more precise estimate about the Hausdorff dimension) if we let $K=K_{\log}^{a}$ be the \textbf{long-range} symmetric transition matrix introduced in Definition~\ref{d.kernels} (on the edges of~$\Z^2$ or on the sites of~$\T$).
\begin{prop}\label{tropbienlog}
Take any $a>0$. 
 Then a.s. there exist exceptional times for the $K_{\log}^a$-exclusion dynamical percolation of parameter $p=p_c=1/2$. Moreover, in the case of dynamics on the sites of $\T$, a.s. the Hausdorff dimension of this set of times equals $31/36$.
\end{prop}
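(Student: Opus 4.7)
The plan is to adapt the second-moment method of Schramm--Steif~\cite{SS10} and Garban--Pete--Schramm~\cite{GPS} for i.i.d. dynamical percolation to the exclusion setting. For each $R \geq 1$, I set $f_R := \mathbf{1}_{\{0 \leftrightarrow \partial B_R\}}$ and
\[
X_R := \int_0^1 f_R\bigl(\omega_{K_{\log}^a}(t)\bigr)\, dt,
\]
so that $\mathbb{E}[X_R] = \alpha_1(R)$. The core step will be to show that $\mathbb{E}[X_R^2] \leq C\, \alpha_1(R)^2$ uniformly in $R$. Paley--Zygmund then yields $\mathbb{P}(X_R > 0) \geq c > 0$ uniformly in $R$, compactness produces a time in $[0,1]$ at which a $1$-arm to distance $R$ exists along a subsequence (hence an infinite cluster at that time), and the usual $0$--$1$ law gives almost sure existence of exceptional times.

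For the second moment, the graphical construction of Appendix~\ref{a.graphical} realises the exclusion dynamics through a random bijection $\sigma_t$ of $E$ (the interchange process), with $\omega_K(t)(e) = \omega_K(0)(\sigma_t^{-1}(e))$. Since $p_c = 1/2$, the Walsh basis $(\chi_S)$ is orthonormal under $\mathbb{P}_{1/2}$, and one obtains the key spectral identity
\[
\mathbb{E}\bigl[f_R(\omega_K(0))\, f_R(\omega_K(t))\bigr] - \alpha_1(R)^2 \;=\; \mathbb{E}_{\sigma_t}\!\left[\sum_{\emptyset \neq S} \hat f_R(S)\,\hat f_R\bigl(\sigma_t(S)\bigr)\right].
\]
The distinctive feature of $K_{\log}^a$ is that a positive fraction of its transposition rate is carried at every scale (its tail satisfies $\sum_{w:\|v-w\|\geq r} K_{\log}^a(v,w) \asymp (\log r)^{-a}$), so the interchange process $\sigma_t$ is very delocalized across scales. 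I expect this to let the BGS-type spectral comparison between i.i.d. and exclusion dynamics developed in Sections~\ref{ss.spectral}--\ref{ss.SC} produce essentially the same Fourier decorrelation as the eigenvalue factor $e^{-t|S|}$ gives in the i.i.d. setting, yielding $\int_0^1 \bigl(\mathbb{E}[f_R(\omega_K(0)) f_R(\omega_K(t))] - \alpha_1(R)^2\bigr)\,dt \lesssim \alpha_1(R)^2$ as required.

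For the Hausdorff dimension $31/36$, the upper bound is standard from the arm exponent $\alpha_1(R) = R^{-5/48+o(1)}$ on $\mathbb{T}$ via covering, while the lower bound comes from a refined second-moment argument applied against an independent Frostman measure of dimension $31/36 - \delta$ on $[0,1]$: the super-heavy log-tails should cost only $o(1)$ in the spectral comparison, so the sharp i.i.d. exponent survives. The main obstacle throughout is that the $\chi_S$ are \emph{not} eigenfunctions of the exclusion generator (unlike for i.i.d. flips), so one cannot invoke a closed-form eigenvalue decay and must instead quantify how efficiently the interchange process $\sigma_t$ destroys the spatial coherence of the spectral support of $f_R$. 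The log-kernel's super-heavy tails are precisely what keeps this loss purely logarithmic, which is why Proposition~\ref{tropbienlog} is markedly simpler than Theorem~\ref{tropbien}: the power-law kernels $K^\alpha$ produce a much more localized dynamics and so require the delicate clustering-of-spectral-sets machinery of Section~\ref{sectionspectralnotlocalized}.
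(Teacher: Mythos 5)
Your framework --- second moment method, graphical construction, spectral representation via the random permutation $\sigma_t$ (which the paper calls $\pi_t$) --- is the one the paper uses, and the spectral identity you write is correct: it is just~\eqref{premiersecondmoment} read through $K_t(S,S')=\Pro[\pi_t(S)=S']$. The genuine gap is the central step, where you assert that the super-heavy tails ``should produce essentially the same Fourier decorrelation as the eigenvalue factor $e^{-t|S|}$.'' No version of that claim can hold, and it is not what makes the argument work. The permutation $\sigma_t$ preserves $|S|$ exactly, so the mass $\widehat{\Pro}_{f_R}[|S|=m]$ at each level is conserved under the dynamics; a straight Cauchy--Schwarz bound on each level gives $\sum_{|S|=|S'|=m}\sqrt{\widehat{\Pro}_{f_R}[\{S\}]}\sqrt{\widehat{\Pro}_{f_R}[\{S'\}]}\,K_t(S,S')\le\widehat{\Pro}_{f_R}[|S|=m]$ with no $t$-dependent gain whatsoever, and summing over $m\ge 1$ and integrating $t\in[0,1]$ yields $\approx 1$, not the required $O(\alpha_1(R))$. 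This is precisely the obstruction flagged in Subsection~\ref{ss.outline}: the size-preserving nature of the exclusion dynamics kills any hope of mimicking $e^{-t|S|}$ decay, no matter how long-range the kernel is. Moreover, the ``BGS-type comparison'' of~\cite{BGS} you invoke only proves (non-quantitative) exclusion sensitivity; it does not address the lower tail of the spectral measure, which is what is needed here.

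What actually closes the gap is a quantitative \emph{singularity} between $\widehat{\Pro}_{f_R}$ and $K_t\star\widehat{\Pro}_{f_R}$, detected through geometry rather than through $|S|$, and this mechanism is absent from your write-up. The paper combines: (a) the abstract bilinear-form estimate Lemma~\ref{singular}, $\sum\sqrt{\nu(x)}\sqrt{\nu(y)}P(x,y)\le\nu(E)(\eta+2\sqrt{\delta})$, applied level by level; (b) the clustering property Proposition~\ref{soimportantlog}, which shows a spectral sample of size $<k$ lies in $(-\exp(k^{\epsilon_0}),\exp(k^{\epsilon_0}))^2$ outside a stretch-exponentially small event, controlling $\delta_k$; and (c) a diffusion estimate in the style of Lemma~\ref{lemmaeta}, showing that under $K_{\log}^a$ at least one of those $k$ particles jumps outside that box by time $t$ except with probability $\lesssim\exp(-c\,t\,k^{1-\epsilon_0(1+a)})$, controlling $\eta_k^a(t)$. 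The long range of $K_{\log}^a$ enters \emph{only} in (c), to guarantee large jumps from inside an exponentially small region --- not to resuscitate i.i.d.\ eigenvalue decay. Your Hausdorff-dimension claim (``the log-tails cost only $o(1)$'') is directionally right but again follows from (a)--(c) fed into~\eqref{e.SM2}: the relevant exponent in~\eqref{estimateetabis} becomes $\zeta/(1-\epsilon_0(1+a))$, which can be pushed down to $5/36$ by taking $\zeta\to 5/36$ (valid on $\T$ by Theorem~\ref{GPScool}) and $\epsilon_0\to 0$, giving $\gamma\to 31/36$. Without naming (a)--(c) the proof has no content at the key step.
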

Proposition~\ref{tropbienlog} will be proved in Section~\ref{sectionexcepcriticality}. As mentioned above, our proof does not work for very localized dynamics (and in particular not for the nearest-neighbour process). Still we conjecture:

\begin{conj}\label{tropbienconj}
Take any symmetric transition matrix $K$ on the edges of $\Z^2$ or on the sites of $\T$. Assume that $K$ equals $0$ on the diagonal. Then a.s. there are exceptional times for the $K$-exclusion dynamical percolation of parameter $p=p_c=1/2$. Moreover, a.s. the Hausdorff dimension of this set of times equals $31/36$.
\end{conj}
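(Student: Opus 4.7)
The plan is to run the standard second-moment strategy that already yields Theorem~\ref{tropbien} and Proposition~\ref{tropbienlog}, but to reduce it to a purely spectral statement robust enough to accommodate an arbitrary symmetric kernel $K$, including the nearest-neighbour one. With $A_R := \{0 \leftrightarrow \partial B_R\}$, $f_R := \un_{A_R}$ and
\[
X_R := \int_0^1 f_R\bigl(\omega_K(t)\bigr)\, dt\,,
\]
invariance of $\Pro_{p_c}$ under the dynamics gives $\E[X_R] = \alpha_1(R)$, and a Paley--Zygmund argument followed by an $R \to \infty$ compactness step reduces the almost sure existence of exceptional times to a uniform upper bound $\E[X_R^2] \leq C\,\alpha_1(R)$. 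The claimed Hausdorff dimension $31/36$ would then be recovered by inserting $\gamma$-energy weights as in \cite{GPS}. Everything therefore comes down to
\[
\int_0^1 \mathrm{Cov}\!\left( f_R(\omega_K(0)), f_R(\omega_K(t)) \right) dt \;\lesssim\; \alpha_1(R)\,.
\]

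To attack this covariance, one expands $f_R$ in the Fourier--Walsh basis and exploits that the generator of the $K$-exclusion process preserves each fixed-cardinality sector. In the spirit of~\cite{BGS} and Section~\ref{sectionspectralnotlocalized} this yields a representation
\[
\mathrm{Cov}\!\left( f_R(\omega_K(0)), f_R(\omega_K(t)) \right) \;=\; \sum_{S \neq \emptyset} \widehat f_R(S)^2\, q^K_t(S)\,,
\]
where $q^K_t(S) \in [0,1]$ is a return-type probability attached to $|S|$ indistinguishable $K$-exclusion particles started from the set $S$; this is the exclusion analogue of the i.i.d.\ identity $q_t(S) = e^{-|S|t}$. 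Introducing the spectral sample $\calS_R$ with law $\widehat f_R(S)^2/\alpha_1(R)$, the target bound becomes
\[
\E\!\left[\int_0^1 q^K_t(\calS_R)\, dt\right] \;\lesssim\; 1\,.
\]
To establish this for an \emph{arbitrary} $K$, I would aim at a quantitative multi-scale spreading statement for $\calS_R$: with good probability, $\calS_R$ has representatives in many dyadic annuli around the origin and, within each annulus, its points are not all concentrated at one site. Spreading of this type, combined with the fact that any symmetric $K \not\equiv 0$ off the diagonal moves at least one particle out of any prescribed finite configuration at a strictly positive rate, should force $q^K_t(\calS_R)$ to decay fast enough to match $\alpha_1(R)$.

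The main obstacle is to make this spreading quantitative at the \emph{finest} scales: for the nearest-neighbour exclusion only the small-scale geometry of $\calS_R$ governs decorrelation, since a nearest-neighbour swap only changes $f_R$ when both endpoints carry pivotal information, so clusters of spectral points on scale $O(\sqrt{t})$ are the enemy. A refinement of the clustering/scattering estimates of Section~\ref{sectionspectralnotlocalized}, valid at \emph{every} dyadic scale and paired with a diffusive control on the local mixing time of symmetric exclusion on finite sets of pivotals, would be required; this is exactly where the method used for $K^\alpha$ at small $\alpha$ breaks down. A natural intermediate target, and probably the cleanest point of entry, is to extend Theorem~\ref{tropbien} from small $\alpha$ to all $\alpha \in (0,+\infty)$ for the power-law kernels $K^\alpha$, since each decrease in $\alpha$ forces control of spectral clustering at progressively finer scales; the conjecture itself corresponds to pushing this program to $\alpha=+\infty$.
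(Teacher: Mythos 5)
The statement you are addressing is a conjecture, not a theorem proved in the paper, and the paper itself says explicitly (Remark~\ref{ongoingwork} and the ``open questions'' section) that its methods break down for very local dynamics such as nearest-neighbour exclusion. So your proposal is correctly a research program rather than a proof, and at the high level --- second-moment method, Proposition~\ref{secondmoment}/\ref{hausdorffcrit}, spectral analysis, identification of small-scale clustering as the obstruction --- you are in line with the paper's own diagnosis.

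However, the central spectral identity you write down is wrong, and this is not a small slip but the very point the paper is at pains to discuss. You claim
\[
\mathrm{Cov}\bigl(f_R(\omega_K(0)), f_R(\omega_K(t))\bigr) = \sum_{S\neq\emptyset} \widehat f_R(S)^2\, q^K_t(S)\,,
\]
as if the $K$-exclusion semigroup were diagonal in the Fourier--Walsh basis after restricting to a fixed-cardinality sector. It is not: within each sector $\{S : |S|=n\}$ the operator $K_t$ is a full (symmetric, doubly stochastic) transition matrix, and the correct identity is \eqref{premiersecondmoment},
\[
\E\bigl[f_R(\omega_K(0)) f_R(\omega_K(t))\bigr] = \sum_{S,S'} \widehat f_R(S)\,\widehat f_R(S')\,K_t(S,S')\,,
\]
with genuinely non-diagonal terms $K_t(S,S')$ for $S\neq S'$ of equal cardinality. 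This is precisely what Subsection~\ref{ss.SC} warns about: an eigenbasis diagonalizing the exclusion generator exists in principle but is non-local and non-explicit, so one keeps the Walsh basis and pays the price of cross terms. Your formula collapses that bilinear form onto the diagonal, and with it the difficulty disappears artificially: bounding $\widehat\E_{f_R}[q^K_t(\calS_R)]$ is a single-sample statement, whereas the real problem is to show that the (square root of the) spectral measure and its push-forward under $K_t$ are quantitatively singular, which is what Lemma~\ref{singular} and the $(\delta_k,\eta_k)$ decomposition in Section~\ref{sectionexcepcriticality} are for. Because of this, the rest of your geometric program points in the wrong direction: you ask for the spectral sample to \emph{spread} across dyadic annuli so that ``return probabilities'' decay, while the paper's mechanism is the opposite --- small spectral samples \emph{cluster} (Theorem~\ref{soimportant}), and it is the destruction of that clustering by diffusion that produces the needed singularity. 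A spreading statement would not, by itself, give you control of the cross terms $K_t(S,S')$. (A secondary issue: your second-moment criterion $\E[X_R^2]\leq C\,\alpha_1(R)$ is vacuous since $X_R\leq 1$ forces $\E[X_R^2]\leq\alpha_1(R)$ unconditionally; you need $\E[X_R^2]\leq C\,\alpha_1(R)^2$, as in Proposition~\ref{secondmoment}.) In short: to have any chance at the conjecture you must first replace the diagonal Ansatz with the bilinear form \eqref{premiersecondmoment} and then confront the singularity problem directly; the natural intermediate target you identify --- pushing $\alpha$ upward in Theorem~\ref{tropbien} --- is indeed the right one, but the bottleneck is the clustering estimate and the singularity detection, not return-probability decay.
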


\begin{rem}\label{ongoingwork}
As suggested by the above conjecture, in some sense any symmetric exclusion dynamical percolation should ``behave like the i.i.d. dynamical percolation" at the critical point. While the proofs of Theorem~\ref{tropbien} and Proposition~\ref{tropbienlog} heavily rely on the ``not localized" properties of the chosen dynamics, we have other clues to support this assertion.
This is actually the purpose of an ongoing work where we plan to prove that in some sense for any symmetric and translation invariant kernel $K$ on the sites of $\T$ (that equals $0$ on the diagonal), the $K$-exclusion dynamical percolation of parameter $p_c=1/2$ has a limit in the continuum that (if we change the time by only a constant factor) is the same as the limit of the i.i.d. process (this last limit has been proved to exist and studied in \cite{GPS2a} and \cite{GPS2b}). That would in particular imply that, for any such $K$, ``Conjecture~\ref{tropbienconj} is true in the continuum". 
\end{rem}


To conclude this section on main results, let us stress that another important contribution of this paper is a strengthening of some of the spectral estimates on the Fourier spectrum of critical percolation obtained in \cite{GPS}. It would be too early at this stage to state these results, but Theorem \ref{soimportant} will be an example of this.

\paragraph{Organization of the paper.} The paper is organized as follows: In Section~\ref{s.back}, we provide an outline of the proof of our main result Theorem~\ref{tropbien}. In particular, we discuss clustering effects for the spectral sets of percolation. Next, Section~\ref{sectionabsence} is devoted to the proof of Propositions~\ref{outsidepc} and~\ref{dgeq11}. In this section we do not use any spectral analysis tool and we follow the seminal paper~\cite{olle1997dynamical}. Then, in Sections~\ref{sectionexcepcriticality} and~\ref{sectionspectralnotlocalized}, we focus on the planar case at the critical point. Our main goal is to prove Theorem~\ref{tropbien}. As explained at the end of Subsection~\ref{ss.outline}, there are two steps in the proof of this theorem: \textit{(a)} showing a result about a clustering effect for spectral sets of percolation. The proof of this last result - in the spirit of~\cite{GPS} - is written in Section~\ref{sectionspectralnotlocalized}. \textit{(b)} Showing that this clustering effect implies a singularity property for the spectral sets of percolation when we let them evolve under our exclusion dynamics. This last step is the subject of Section~\ref{sectionexcepcriticality}.


\paragraph{Acknowledgments:} $ $ \\
We wish to thank C\'edric Bernardin for useful discussions on exclusion processes and duality formulas as well as Jeff Steif for pointing to us very useful references. Finally we wish to thank both anonymous referees for their very helpful and detailed comments.

\section{Background and outline of proof}\label{s.back}

\subsection{Arm events and some other notations}\label{sectionarm}





\ni
In this subsection, we list some classical notations/inequalities on (static) \textbf{critical} percolation. We refer for example to \cite{GPS, werner2007lectures} for more background.
We will focus from now on (except in Section \ref{sectionabsence}) on two models: bond percolation on $\Z^2$ and site percolation on $\T$. In both cases, we think about percolation configurations $\omega \in \Omega$ as colourings of the plane.\footnote{Consider a site percolation configuration on $\T$ and a hexagon $H$ of the hexagonal lattice dual to $\T$. We colour $H$ white (respectively black) if the corresponding site of $\T$ is open (respectively closed) in $\omega$. See Subsection~$2.1$ of~\cite{GPS} for the colouring of the plane induced by a percolation configuration on the edges of $\Z^2$. In both cases, an open (respectively a dual) path is a continuous path included in the white (respectively black) region of the plane.} Also, we let $\{ 0 \leftrightarrow R \}$ denote the event that there is an open path from $0$ to $\partial [-R,R]^2$.

The \textbf{tile} of a site/edge is the set of all points of the plane whose colour is determined by this site/edge.
Let $R \geq 0$. In the context of site percolation of $\T$, we let $\mathcal{I}_R$ denote the set of all sites whose tile intersects $[-R,R]^2$. In the context of bond percolation on $\Z^2$, we let $\mathcal{I}_R$ denote the set of the midpoints of all edges whose tile intersects $[-R,R]^2$ (we choose the midpoints only to obtain a discrete set). We then define $\Omega_R = \lbrace -1,1 \rbrace^{\mathcal{I}_R}$. In other words, $\Omega_R$ is the set of percolation configurations restricted to the window $[-R,R]^2$. We also write $\mathcal{I} = \cup_{R \geq 0} \mathcal{I}_R$.
We say that two disjoint subsets $A$ and $B$ of the plane are \textbf{percolation disjoint} if there is no tile that intersects both $A$ and $B$.

\paragraph{Arm events.} An annulus of the form $\left( x + [-R,R]^2 \right) \setminus \left( x + (-r,r)^2 \right)$ (where $0 \leq r \leq R$ and $x \in \R^2$) is called a square annulus. The square $x+[-R,R]^2$ (respectively $x+(-r,r)^2$) is called the outer square (respectively the inner square); $R$ and $r$ are called the outer radius and the inner radius. If $A$ is a square annulus, the $k$-arm event in $A$ is the event that there exist $k$ paths (included in $A$) of alternating colours from the boundary of the inner square of $A$ to the boundary of the outer square of $A$. Let $1 \leq r < R$. We write $\alpha_k(r,R)$ for the probability of this event with $A = [-R,R]^2 \setminus (-r,r)^2$ and $p=p_c=1/2$. We will also need the notion of $k$-arm events in the half-plane. We use the same definitions except that we ask that the paths live in the annulus intersected with the upper half-plane $\R \times \R_+$ (and the estimates that we are going to state below are also true for the lower, right and left half-planes). Finally, we will need the notion of $k$-arm event in the quarter plane that is the obvious analogue in the quarter-plane. The analogues of $\alpha_k(r,R)$ in the half-plane and in the quarter-plane are denoted (following~\cite{GPS}) by $\alpha^+_k(r,R)$ and $\alpha^{++}_k(r,R)$.

We write $\alpha_1(R)$ for the probability of $\lbrace 0 \leftrightarrow R \rbrace$. Note that $\alpha_1(R) \asymp \alpha_1(1,R)$. Also, we write $\alpha_k(R) := \alpha_k(1,R)$ for any $k \geq 2$. If $r \geq R$, we let $\alpha_k(r,R):=1$.

By using RSW techniques, one can prove that there exists $C =C(k) \in [1,+\infty)$ such, that for all $R \geq r$ large enough:
\begin{equation}\label{poly}
\frac{1}{C} \left( \frac{r}{R} \right)^C \leq \alpha_k(r,R) \leq C \left( \frac{r}{R} \right)^{1/C} \, .
\end{equation}
The obvious analogous results for $\alpha_k^+(r,R)$ and $\alpha_k^{++}(r,R)$ also hold. An important property (also true for $\alpha_k^+(\cdot,\cdot)$ and $\alpha_k^{++}(\cdot,\cdot)$) is the \textbf{quasi-multiplicativity} property (see~\cite{kesten1987scaling}, the Appendix of~\cite{SS10} or Section~$4$ of~\cite{nolin2008near}): there exists $C =C(k) \in [1,+\infty)$ such that, for all $r_3 \geq r_2 \geq r_1 \geq 1$:
\begin{equation}\label{quasimulti}
\frac{1}{C} \, \alpha_k(r_1,r_2) \, \alpha_k(r_2,r_3) \leq \alpha_k(r_1,r_3) \leq C \, \alpha_k(r_1,r_2) \, \alpha_k(r_2,r_3) \, .
\end{equation}

In this paper, we will only use $\alpha_4$, $\alpha_2$, $\alpha_1$, $\alpha_3^+$ and $\alpha_3^{++}$ (for this last quantity, we will actually only use that $\alpha_3^{++}(r,R) \leq \alpha_3^+(r,R)$). 
For site percolation on $\T$ it is proved in~\cite{lawler2002onearm} and~\cite{smirnov2001critical} that:
\begin{eqnarray}\label{5/48}
\alpha_1(r,R) & = & \left( r/R \right)^{5/48+\petito{1}} \, ,\\
\alpha_2(r,R) & = & \left( r/R \right)^{1/4+\petito{1}} \, ,\\
\label{5/4}
\alpha_4(r,R) & = & \left( r/R \right)^{5/4+\petito{1}} \, ,
\end{eqnarray}
where $R \geq r \geq 1$ and $\petito{1} \rightarrow 0$ as $r/R \rightarrow 0$.

Contrary to the above arm-exponents, the exponent of the $3$-arm event in the half-plane has been computed (thanks to RSW techniques) for both models: Let $R \geq r \geq 1$. It has been shown by Aizenman (see for example \cite{werner2007lectures}) that, for site percolation on $\T$ and for bond percolation on $\Z^2$:  
\begin{equation}\label{alpha3+}
\alpha_3^+(r,R) \asymp \left(r/R \right)^2\,.
\end{equation}

For bond percolation on $\Z^2$,  we have the following weaker estimates on $\alpha_4(r,R)$: there exists $C < +\infty$ and $\epsilon > 0$ such that for all $R \geq r \geq 1$:
\begin{equation}\label{alpha4}
\epsilon \; \frac{1}{\alpha_1(r,R)} \left( \frac{r}{R} \right)^{2-\epsilon} \leq \alpha_4(r,R) \leq C \frac{r}{R} \sqrt{\alpha_2(r,R)} \, .
\end{equation}
See the appendix of~\cite{GPS} for the left-hand inequality of~\eqref{alpha4} and Lemma~B$.1$ of~\cite{SS11} for the right-hand inequality (this is not exactly the content of this lemma but this is a direct consequence of its proof since the inequality~$($B$.6)$ can be replaced by $\E \left[ Y_j \right] \lesssim \alpha_2(r,R)$). See Chapter~$6$ of~\cite{book} for more references about such inequalities.



\subsection{Second moment method and exclusion sensitivity}

In what follows
, we fix a symmetric transition kernel $K$ from Definition \ref{d.kernels} on the sites of $\T$ or on the edges of $\Z^2$ and \textbf{we work at the critical point $p = p_c = 1/2$}. We shall always denote the associated exclusion dynamical percolation process by $\left( \omega_K(t) \right)_{t \geq 0}$. Inspired by the case of \textbf{i.i.d. dynamical percolation} from \cite{olle1997dynamical, SS10, GPS}, the only strategy which is known so far to identify the existence of exceptional times is to rely on the classical second moment method. In the present setting (see for example \cite{SS10,GPS,book}), it boils down to proving the following estimate: 

\begin{prop}\label{secondmoment}
Let $f_R : \Omega_R \rightarrow \lbrace 0,1 \rbrace$ be the indicator function of the radial event $\lbrace 0 \leftrightarrow R \rbrace$ ($f_R$ is well defined since this event only depends on the state of the sites/edges in $\mathcal{I}_R$, see Subsection~\ref{sectionarm} for the definitions of $\Omega_R$ and $\mathcal{I}_R$). To prove that a.s. there exist exceptional times for the $K$-exclusion dynamical percolation (of parameter $p_c=1/2$), it is sufficient to prove that there exists a constant $C=C(K)<+\infty$ such that, for all $R \geq 1$:
\begin{align}\label{e.SM1}
\int_0^1 \E \left[ f_R(\omega_K(0)) \, f_R(\omega_K(t)) \right] \, dt \leq C \alpha_1(R)^2 \, ,
\end{align}
where $f_R(\omega_K(t))$ means that we apply $f_R$ to the restriction of $\omega_K(t)$ to the sites/edges in $\mathcal{I}_R$.
\end{prop}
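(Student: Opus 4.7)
The plan is to follow the classical second-moment route. First I would define
\[
X_R \;:=\; \int_0^1 f_R(\omega_K(t))\,dt,
\]
which is well-defined since $t\mapsto f_R(\omega_K(t))$ is a càdlàg $\{0,1\}$-valued function (it depends on the finitely many coordinates indexed by $\mathcal I_R$, each of which is updated finitely often in $[0,1]$). Since $\Pro_{p_c}$ is invariant under the dynamics, $\omega_K(t)\sim \Pro_{p_c}$ for every $t$, so $\E[X_R]=\alpha_1(R)$. By Fubini together with stationarity,
\[
\E[X_R^2]\;=\;2\int_0^1(1-t)\,\E\bigl[f_R(\omega_K(0))\,f_R(\omega_K(t))\bigr]\,dt\;\le\;2C\,\alpha_1(R)^2,
\]
using the assumed bound. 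The Paley--Zygmund inequality then yields $\Pro[X_R>0]\ge \tfrac{1}{2C}$ uniformly in $R\ge 1$.

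Next I would run a compactness argument. The random set $T_R:=\{t\in[0,1]:f_R(\omega_K(t))=1\}$ is a finite union of half-open intervals and is non-empty whenever $X_R>0$; let $\bar T_R$ be its closure in the compact interval $[0,1]$. These $\bar T_R$ are nested decreasing closed sets, all non-empty on the event $\bigcap_R\{X_R>0\}$, which has probability $\ge 1/(2C)$ by monotone convergence. Cantor's intersection theorem then produces some $t^*\in\bigcap_R\bar T_R$. Choosing $t_n\in T_{R_n}$ with $R_n\to\infty$ and $t_n\to t^*$ and extracting a monotone sub-subsequence, either $t_n\downarrow t^*$, in which case right-continuity of $\omega_K$ forces $f_R(\omega_K(t^*))=1$ for every $R$ and $t^*$ is exceptional, or $t_n\uparrow t^*$, in which case the left-limit $\omega_K(t^*-)$ supports an infinite cluster through $0$ and any time strictly between the last jump before $t^*$ and $t^*$ itself is exceptional. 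So $\Pro[\text{exists exceptional time in }[0,1]]\ge 1/(2C)>0$.

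Finally, to upgrade positive probability to probability one, I would note that the event ``there exists an exceptional time in $\R_+$'' is invariant under the time-shift $(\omega_K(\cdot))\mapsto(\omega_K(\cdot+s))$. Because each kernel $K$ from Definition~\ref{d.kernels} is symmetric and irreducible on $\mathcal I$, the symmetric exclusion dynamics started from the Bernoulli product measure $\Pro_{p_c}$ is ergodic with respect to these time-shifts (a classical result of Liggett at equilibrium), so a $0$--$1$ law promotes the above positive lower bound to $1$. The only technical subtlety in the whole argument is the left-limit bookkeeping in the compactness step; everything else is routine, and the actual substance of the paper lies downstream, in proving the $L^2$ bound assumed above.
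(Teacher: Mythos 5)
Your overall strategy — second moment on $X_R=\int_0^1 f_R(\omega_K(t))\,dt$, Paley--Zygmund, compactness via the nested closed sets $\bar T_R$, and ergodicity to pass from positive probability to a.s. — is exactly the route the paper takes (following \cite{SS10}). The $L^2$ computation and the ergodicity step are fine, and the case $t_n\downarrow t^*$ is handled correctly by right-continuity.

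The gap is in the $t_n\uparrow t^*$ case. What you actually obtain there is that $f_R(\omega_K(t^*{-}))=1$ for every $R$, i.e.\ the coordinate-wise left limit $\omega_K(t^*{-})$ percolates. But $\omega_K(t^*{-})$ is not one of the configurations $\omega_K(u)$, and your claim that ``any time strictly between the last jump before $t^*$ and $t^*$ itself is exceptional'' does not follow: for each $R$ there is a maximal interval $(s_R,t^*)$ on which $f_R(\omega_K(\cdot))\equiv 1$, but $s_R$ depends on $R$ and can increase to $t^*$ as $R\to\infty$ (the full process has jump times dense in $[0,1]$). When $\sup_R s_R = t^*$ there is no single $u<t^*$ with $0\leftrightarrow R$ in $\omega_K(u)$ for all $R$, so no exceptional time is produced. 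This is exactly the subtlety the paper resolves via the lemma stated immediately after the proposition: a.s.\ the exceptional-time set of the ``closed-up'' process $\overline\omega_K$ (each coordinate's open set replaced by its closure) coincides with that of $\omega_K$. With that lemma the case $t_n\uparrow t^*$ is immediate, since $0\leftrightarrow\infty$ in $\omega_K(t^*{-})$ implies $0\leftrightarrow\infty$ in $\overline\omega_K(t^*)$, hence $t^*$ is exceptional for $\omega_K$ as well. The lemma itself (Lemma~3.2 of \cite{olle1997dynamical}, adapted here) is a genuine input: its proof controls, over the countable set of jump times, the probability that the jumping bit is pivotal for $\{0\leftrightarrow\infty\}$. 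Your argument quietly assumes the conclusion of that lemma; to be complete you need to state and use it.
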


\begin{proof}[Proof (sketch)]
First note that it follows from Kolmogorov $0$-$1$ law that either a.s. there are exceptional times or a.s. there is no exceptional time. Hence, it is sufficient to prove that~\eqref{e.SM1} implies that there are exceptional times with positive probability. As explained in the i.i.d. case in \cite{SS10} (in the paragraph above Lemma~$5.1$, see also Proposition~$11.3$ in \cite{book}), this is a simple consequence of a second moment inequality. The only properties of the dynamical process that are used in the paragraph above Lemma~$5.1$ in~\cite{SS10} are: \textit{(a)} the fact that $\Pro_{1/2}$ is an invariant measure and \textit{(b)} a topological property about the set of exceptional times. This topological property is Lemma~$3.2$ of~\cite{olle1997dynamical}, that we state below in the case of the exclusion process and whose proof is exactly the same as for the i.i.d. process.
\end{proof}

\begin{lem}
Let $\left( \overline{\omega}_K(t) \right)_{t \geq 0}$ be obtained from $\left( \omega_K(t) \right)_{t \geq 0}$ by setting, for every $i \in \mathcal{I}$ (i.e. every edge or every site depending on the model), the set $\lbrace t \geq 0 \, : \, \overline{\omega}_K(t)_i = 1 \rbrace$ to be the closure of $\lbrace t \geq 0 \, : \, \omega_K(t)_i = 1 \rbrace$. Then a.s.:
\[
\lbrace t \, : \, 0 \overset{\overline{\omega}_K(t)}{\longleftrightarrow} \infty \rbrace = \lbrace t \, : \, 0 \overset{\omega_K(t)}{\longleftrightarrow} \infty \rbrace \, .
\]
\end{lem}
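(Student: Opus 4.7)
The plan is to mirror the argument of Lemma~3.2 in~\cite{olle1997dynamical}. The inclusion $\supseteq$ is immediate: by construction $\overline{\omega}_K(t)_i \geq \omega_K(t)_i$ pointwise in $t$ and $i \in \mathcal I$, so any open path to infinity in $\omega_K(t)$ is still open in $\overline{\omega}_K(t)$.

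For the reverse inclusion I would first localise the times at which the two configurations can disagree. Each trajectory $s\mapsto \omega_K(s)_i$ is c\`adl\`ag and piecewise constant, so $\{s:\omega_K(s)_i=1\}$ is a countable disjoint union of half-open intervals, and its closure only adds the right endpoints, i.e.\ the instants at which $\omega_K(\cdot)_i$ has just jumped from $1$ to $-1$. Such jumps occur exclusively at clock rings of the pairs $\{i,j\}$ in the graphical construction of Appendix~\ref{a.graphical}, and since the independent Poisson clocks attached to distinct pairs a.s.\ never ring simultaneously, one obtains
\[
\#\{\,i\in\mathcal I : \overline{\omega}_K(t)_i\ne\omega_K(t)_i\,\} \le 1 \quad\text{for every } t\ge0, \text{ a.s.}
\]
In particular, outside the countable collection $(\tau_k)_{k\ge 1}$ of all swap times the two processes agree.

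It then remains to rule out discrepancies at $t=\tau_k$. By stationarity, $\omega_K(\tau_k)\sim\P_{1/2}$ for each $k$, and at the critical point of planar percolation on $\T$ or $\Z^2$ there is a.s.\ no infinite cluster at all (\cite{harris1960lower,kesten1980critical}); a countable union over $k$ gives that a.s.\ no $\omega_K(\tau_k)$ contains any infinite cluster. Since $\overline{\omega}_K(\tau_k)$ differs from $\omega_K(\tau_k)$ by opening at most one extra site, and opening a single site can only merge the cluster of $0$ with the finitely many finite clusters adjacent to that site, the cluster of $0$ in $\overline{\omega}_K(\tau_k)$ remains finite, so $0\not\leftrightarrow\infty$ in $\overline{\omega}_K(\tau_k)$. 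This yields the equality of the two sets of exceptional times. The only mildly delicate step is the ``at most one discrepancy per time'' bound, which relies on the independence and continuous distributions of the Poisson clocks; everything else is routine.
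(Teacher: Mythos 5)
Your proof is correct and follows the same route the paper itself invokes, namely Lemma~3.2 of H\"aggstr\"om--Peres--Steif: localise the discrepancies between $\overline{\omega}_K$ and $\omega_K$ to the countable set of clock-ring times, note that at each such (clock-measurable, hence $\omega(0)$-independent) time the configuration is still $\Pro_{1/2}$-distributed, and observe that turning on a single extra site or edge cannot create an infinite cluster. The one adaptation required by the exclusion setting -- that a swap of a pair $\{e,f\}$ produces a closure discrepancy only at the member of the pair that jumps from $1$ to $-1$, so still at most one site differs per time -- is exactly the observation you make.
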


From now on, for any $R \geq 1$, $f_R$ will always be the indicator function of $\lbrace 0 \leftrightarrow R \rbrace$ (defined on $\Omega_R$).
\medskip

In Theorem~\ref{tropbien} and Proposition~\ref{tropbienlog}, we are also interested in the Hausdorff dimension of the set of exceptional times. We have the following proposition similar to Proposition~\ref{secondmoment} which provides lower-bound estimates on the Hausdorff dimension (upper-bounds are much easier to obtain, see  Proposition~\ref{upperhausdorff}). 

\begin{prop}\label{hausdorffcrit}
Let $d \in [0,1]$. To prove that the Hausdorff dimension of the set of exceptional times of a $K$-exclusion dynamical percolation (of parameter $p_c=1/2$) is an a.s. constant larger than or equal to $d$, it is sufficient to show that for any $\gamma < d$ there exists a constant $C=C(K,\gamma)$ such that, for all $R \geq 1$:
\begin{align}\label{e.SM2}
\int_0^1 \left( \frac{1}{t} \right)^{\gamma} \E \left[ f_R(\omega_K(0)) \, f_R(\omega_K(t)) \right] \, dt \leq C \alpha_1(R)^2 \, .
\end{align}
\end{prop}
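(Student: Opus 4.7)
The plan is to apply Frostman's energy method: I will construct a natural family of random measures $(\mu_R)_{R\geq 1}$ on $[0,1]$ whose expected total mass is bounded below, whose expected $\gamma$-energy is bounded above by the hypothesis, and then extract a subsequential weak limit $\mu_\infty$ that charges the set $\mathcal{E}$ of exceptional times and has finite $\gamma$-energy.

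First, for each $R\geq 1$, define
$$d\mu_R(t) := \frac{1}{\alpha_1(R)}\, f_R(\overline{\omega}_K(t))\, dt,$$
where $\overline{\omega}_K$ is the closure process introduced in the topological lemma stated just before Proposition \ref{hausdorffcrit}. By invariance of $\Pro_{1/2}$ under the dynamics and $\E[f_R]\asymp \alpha_1(R)$, one has $\E[\mu_R([0,1])]\asymp 1$ uniformly in $R$. Now fix $\gamma\in(0,d)$. By stationarity and time-symmetry of the symmetric exclusion process,
$$\E[E_\gamma(\mu_R)] \;=\; \frac{1}{\alpha_1(R)^2}\int_0^1\!\!\int_0^1 |s-t|^{-\gamma}\, \E[f_R(\omega_K(s))f_R(\omega_K(t))]\, ds\, dt \;\leq\; \frac{2}{\alpha_1(R)^2}\int_0^1 t^{-\gamma}\, \E[f_R(\omega_K(0))f_R(\omega_K(t))]\, dt \;\leq\; 2C$$
by the hypothesis. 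Since $t^{-\gamma}\geq 1$ on $(0,1]$, the same hypothesis also bounds $\E[\mu_R([0,1])^2]$, so a Paley--Zygmund inequality yields a constant $c>0$ with $\P[\mu_R([0,1])\geq c]\geq c$, uniformly in $R$.

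The uniform second-moment bound makes the laws of $\mu_R$ tight among finite measures on $[0,1]$ (with the weak topology). A diagonal extraction along a subsequence $R_n\to\infty$ together with a Skorokhod coupling allows me to realize, on one probability space, $\mu_{R_n}\to\mu_\infty$ weakly and almost surely, jointly with the process $\overline{\omega}_K$. On the positive-probability event built above, $\mu_\infty([0,1])\geq c$ and $E_\gamma(\mu_\infty)\leq \liminf_n E_\gamma(\mu_{R_n})<\infty$ by lower semi-continuity of the $\gamma$-energy. To locate $\mathrm{supp}(\mu_\infty)$, I use the monotonicity $f_R\leq f_{R_0}$ for $R\geq R_0$: this forces $\mathrm{supp}(\mu_R)\subseteq F_{R_0}:=\{t\in[0,1] : f_{R_0}(\overline{\omega}_K(t))=1\}$, and $F_{R_0}$ is closed in $[0,1]$ precisely because we work with the closure process $\overline{\omega}_K$. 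Weak convergence preserves containment in a closed set, so intersecting over $R_0\to\infty$ and invoking the topological lemma gives $\mathrm{supp}(\mu_\infty)\subseteq\mathcal{E}$ almost surely on this event.

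Frostman's lemma then yields $\dim_H(\mathcal{E})\geq \gamma$ with positive probability. Breaking $[0,\infty)$ into unit intervals and invoking ergodicity of the equilibrium exclusion dynamics upgrades this to an a.s. lower bound, and also yields a.s. constancy of $\dim_H(\mathcal{E})$. Letting $\gamma\nearrow d$ concludes the proof. The delicate point, and the only reason for inserting the closure process $\overline{\omega}_K$ into the definition of $\mu_R$, is the support step: since \emph{support} is not continuous under weak convergence of measures, one genuinely needs the auxiliary sets $F_{R_0}$ to be closed in order to trap $\mathrm{supp}(\mu_\infty)$ inside $\mathcal{E}$. Everything else is a standard second-moment / energy-method manipulation, modelled on the i.i.d.\ case treated in \cite{SS10, GPS}.
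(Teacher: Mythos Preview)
Your proposal is correct and follows the same approach as the paper, which only sketches the argument and defers to \cite{SS10}, Section~6, for the identical second-moment/Frostman energy method (normalized occupation measures, expected $\gamma$-energy bound, weak subsequential limit supported on $\mathcal{E}$, Frostman, then ergodicity). One minor remark: the Skorokhod step is unnecessary and slightly awkward as stated, since Skorokhod changes the underlying probability space and one must then take care that the process $\overline{\omega}_K$ (and hence the closed sets $F_{R_0}$) is carried along coherently; since all the $\mu_R$ already live on the original space and $[0,1]$ is compact, it is cleaner to extract a weak subsequential limit directly on the event $\limsup_n\{\|\mu_{R_n}\|\geq c,\ E_\gamma(\mu_{R_n})\leq M\}$, which has positive probability by Paley--Zygmund, Markov and Fatou.
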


\ni
{\em Proof (sketch).}
The fact that the Hausdorff dimension of the set of exceptional times is an a.s. constant follows from Kolmogorov $0$-$1$ law.
So, it is sufficient to prove that if~\eqref{e.SM2} holds for any $\gamma < d$, then the Hausdorff dimension of the set of exceptional times is at least $d$ with positive probability. The analogous result for the i.i.d. process is proved in Section~$6$ of~\cite{SS10}, where the authors use compactness arguments and the classical Frostman's criterion. Since the proof is exactly the same in our case, we refer to~\cite{SS10}. \qed
\medskip

As one can see in estimates~\eqref{e.SM1} and~\eqref{e.SM2}, proving the existence of exceptional times (and estimating their ``size'') thus requires to obtain good enough quantitative estimates on the correlations
\[
(R,t) \mapsto \E \left[ f_R(\omega_K(0)) \, f_R(\omega_K(t)) \right] \, .
\]

Usually in this situation, a legitimate intermediate problem is to analyse the \textbf{noise sensitivity} of non-degenerate percolation events (such as left-right crossing events of rectangles whose probability do not degenerate to zero as for the events $\{0 \leftrightarrow R\}$). Identifying noise sensitivity is only an intermediate step as it is far from being quantitative enough in general to imply existence of exceptional times. For example, the seminal work \cite{BKS} on noise sensitivity was not sufficient to imply the existence of exceptional times, which was achieved only later in \cite{SS10}. In \cite{BKS}, Benjamini, Kalai and Schramm consider the so-called left-right crossing events of the square $[-n,n]^2$ which are Boolean functions $g_n : \{-1 ,1 \}^{\Omega_n} \to \{0,1\}$ (see \cite{BKS, SS10, GPS}). Their main theorem is to show that for any fixed $t >0$, if one runs an \textbf{i.i.d. dynamics}, then:
\begin{align*}
\text{Cov} \left( g_n(\omega(0)), g_n(\omega(t)) \right) \underset{n\to +\infty} \longrightarrow 0 \, .
\end{align*}

In order to identify existence of exceptional times with a conservative dynamics such as $\left( \omega_K(t) \right)_{t \geq 0}$, a legitimate first step (analogous to \cite{BKS} in the i.i.d. setting) is therefore to identify noise sensitivity of Boolean functions such as $g_n$ under exclusion dynamics.   
%
This is exactly what was achieved in \cite{BGS} where Broman, the first author and Steif study the \textbf{exclusion sensitivity of Boolean functions}. Let us say a few words about it. Consider a sequence of Boolean functions $h_R : \Omega_R=\{-1,1\}^{\mathcal{I}_R} \rightarrow \lbrace 0,1 \rbrace$ (see Subsection~\ref{sectionarm} for the definition of $\mathcal{I}_R$).
The notion of exclusion sensitivity is defined as the analogue of the notion of noise sensitivity (\cite{BKS}) in the context of exclusion processes. More precisely,  the sequence $(h_R)_R$ is \textbf{$K$-exclusion sensitive} if, for all $t>0$:
\[
\text{Cov} \left( h_R(\omega_K(0)), h_R(\omega_K(t)) \right) \underset{R \rightarrow +\infty}{\longrightarrow} 0 \, ,
\]
where $h_R(\omega_K(t))$ means that we apply $h_R$ to the restriction of $\omega_K(t)$ to the sites/edges in $\mathcal{I}_R$.

In \cite{BGS}, it is proved that, if $g_n$ is the above left-right crossing event of the square $[-n,n]^2$ and if $K=K^\alpha$ is an $\alpha$-power law kernel on the sites of $\T$ with $\alpha$ sufficiently small (see Definition \ref{d.kernels}), then $(g_n)_n$ is $K^\alpha$-exclusion sensitive. (Actually, the matrices studied in \cite{BGS} are not exactly the $\alpha$-stable matrices from Definition \ref{d.kernels} but their methods apply to these last matrices at least with $\alpha$ small.)
As we shall see in the next subsections, both for i.i.d. and exclusion dynamics, the main technology behind identifying noise sensitivity and exceptional times turns out to be a careful spectral analysis of Boolean functions such as $g_n$ and $f_R$. As we will see in more details, it is useful to keep in mind the following informal distinction between the two:
\bi
\item[i)] Identifying {\bf noise sensitivity} corresponds to proving that most of the spectral mass of $g_n$ or $f_R$ is supported on large frequencies (possibly in a quantitative manner, i.e. most of the spectral mass is supported on sets of size $n^\alpha$ for example). 
\item[ii)] While identifying \textbf{exceptional times} requires a much more delicate analysis of the spectral mass: quantitative upper-bounds on the \textbf{lower tail} of $\widehat{f}_R$ are required (see \cite{SS10, GPS}).  
\ei

\subsection{The spectral sample in the i.i.d. setting}\label{ss.spectral}

Our main goal is to prove that~\eqref{e.SM1} holds (for the transition kernels of Theorem~\ref{tropbien} and Proposition~\ref{tropbienlog}). To this purpose, we analyse the quantities $\E \left[ f_R(\omega_K(0)) \, f_R(\omega_K(t)) \right]$, first by following ideas of~\cite{BGS}, and next by applying results on the \textbf{spectral sample} of $f_R$. In order to define the spectral sample and explain its links with the correlations $\E \left[ f_R(\omega_K(0)) \, f_R(\omega_K(t)) \right]$, we first need to introduce the notion of \textbf{Fourier decomposition of Boolean functions} that is used to study percolation in the seminal work \cite{BKS} (see also~\cite{book}). In this context, we see $f_R$ as an element of $L^2\left( \Omega_R,\Pro_{1/2} \right)$ which is the space of functions from $\Omega_R$ to $\R$ endowed with the scalar product $<h,h'>=\E_{1/2} \left[ h(\omega)h'(\omega) \right]$. (The probability measure $\Pro_{1/2}$ can be seen equivalently as the restriction to $\Omega_R$ of the usual $\Pro_{1/2}$ defined on $\Omega = \{ -1,1 \}^\mathcal{I}$ or as the uniform measure on $\Omega_R$.) For every $S \subseteq  \mathcal{I}_R$ and every $\omega \in \Omega_R$, let:
\[
\chi_S(\omega)=\prod_{i \in S} \omega_i \, .
\]
(In particular $\chi_\emptyset$ is the constant function $1$.) It is not difficult to check that $\left( \chi_S \right)_{S \subseteq \mathcal{I}_R}$ is an orthonormal basis of $L^2\left( \Omega_R,\Pro_{1/2} \right)$. Therefore, for any function $h = h_R \in L^2\left( \Omega_R,\Pro_{1/2} \right)$ we can define the Fourier decomposition of $h$ as the unique family of real numbers $(\widehat{h}(S))_{S \subseteq \mathcal{I}_R}$ such that:
\[
h = \sum_{S \subseteq \mathcal{I}_R} \widehat{h}(S) \, \chi_S \, .
\]
(Note that $\widehat{h}(\emptyset)=\E_{1/2} \left[ h(\omega) \right]$.) The reason to introduce this orthonormal basis is that it diagonalizes the i.i.d. dynamics $t \mapsto \omega(t)$:
\[
\E \left[ \chi_S(\omega(0)) \chi_{S'}(\omega(t)) \right] = \delta_{S,S'} \, e^{-t|S|} \, .
\]
As a result, we have:
\[
\E \left[ h(\omega(0)) h(\omega(t)) \right] = \sum_{S \subseteq \mathcal{I}_R} \widehat{h}(S)^2 e^{-t|S|} \, .
\]

To gain more geometric intuitions, it is interesting to view the coefficients $\widehat{h}(S)^2$ as weights of a probability measure on the sets $S \subseteq \mathcal{I}_R$. This is the approach followed in~\cite{GPS}:

\begin{defi}[\textup{\cite{GPS}}]\label{spectralsample}
Let $R \geq 1$ and $h \in L^2\left( \Omega_R,\Pro_{1/2} \right) \setminus \lbrace 0 \rbrace$. A \textbf{spectral sample} of $h$ is a random variable on the sets $S \subseteq \mathcal{I}_R$ whose distribution $\widehat{\Pro}_{h}$ is given by:
\begin{align}\label{e.SM}
\widehat{\Pro}_{h} \left[ \lbrace S \rbrace \right] = \frac{\widehat{h}(S)^2}{\E_{1/2} \left[ h(\omega)^2 \right]} \, .
\end{align}
We write $\widehat{\E}_h$ for the corresponding expectation. Note that if $h=f_R$ then $\E_{1/2} \left[ h(\omega)^2 \right] = \alpha_1(R)$. We will sometimes work with the unnormalized measure $\widehat{\Q}_h$ given by:
\[
\widehat{\Q}_{h} \left[ \lbrace S \rbrace \right] = \widehat{h}(S)^2 \, .
\]
\end{defi}
For some ideas behind the study of the spectral sample and its links with the pivotal set, we refer to~\cite{book} (Chapters~$9$ and~$10$). We now state one of the main theorems from~\cite{GPS} which quantifies exactly what is the \textbf{lower tail} of the spectral measure of the above radial functions $f_R : \Omega_R \rightarrow \lbrace 0,1 \rbrace$. This theorem holds for our two models: site percolation on $\T$ and bond percolation on $\Z^2$.

\begin{thm}[Theorem~$7.3$ of~\cite{GPS}, see also Theorem~$10.22$ in~\cite{book} and Exercise $10.7$ of~\cite{book} for the lower-bound part]\label{GPScool}
Let $R \geq r \geq 1$, then:
\[
\widehat{\Pro}_{f_R} \left[ |S| < r^2\alpha_4(r) \right] \asymp \frac{\alpha_1(R)}{\alpha_1(r)} \, .
\]
Let $\rho(l)=\inf \lbrace r \, : \, r^2\alpha_4(r) \geq l \rbrace$. The above estimate implies that there exists some $C < + \infty$ such that, for all $l \in \N_+$ and all $R \geq 1$:
\[
\widehat{\Pro}_{f_R} \left[ |S| < l \right]  \leq C \frac{\alpha_1(R)}{\alpha_1(\rho(l))} \, .
\]
\end{thm}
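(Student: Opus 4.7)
The plan is to identify, up to universal constants, the spectral event $\{|S|<r^2\alpha_4(r)\}$ with the more geometric event $\{S\subseteq \mathcal{I}_r\}$, and to evaluate the probability of the latter via the standard identity
\[
\sum_{S\subseteq \mathcal{I}_r}\widehat{f_R}(S)^2 \;=\; \E\bigl[f_R(\omega^1)\,f_R(\omega^2)\bigr],
\]
where $\omega^1,\omega^2\in\Omega_R$ are sampled to agree on $\mathcal{I}_r$ and to be independent (each $\Pro_{1/2}$-distributed) on $\mathcal{I}_R\setminus \mathcal{I}_r$; this is a one-line character calculation using the independence outside $\mathcal{I}_r$ and the fact that $\E[\omega_i]=0$. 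Applied to $f_R$, the right-hand side factors into a ``common inside'' and ``independent outside'' contribution: both copies need an open arm from $0$ to $\partial[-r,r]^2$ inside $\mathcal{I}_r$, where they coincide (probability $\asymp \alpha_1(r)$), and then each independently needs an open arm across $[-R,R]^2\setminus(-r,r)^2$ in its private outer configuration (probability $\asymp \alpha_1(r,R)$ each, by RSW-type gluing). Using quasi-multiplicativity~\eqref{quasimulti} to rewrite $\alpha_1(r,R)\asymp\alpha_1(R)/\alpha_1(r)$, and dividing by $\|f_R\|^2=\alpha_1(R)$, one obtains
\[
\widehat{\Pro}_{f_R}[S\subseteq \mathcal{I}_r] \;\asymp\; \frac{\alpha_1(r)\,\alpha_1(r,R)^2}{\alpha_1(R)} \;\asymp\; \frac{\alpha_1(R)}{\alpha_1(r)}.
\]

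For the lower bound of the theorem, the plan is a first-moment control of $|S|$ on the event $\{S\subseteq \mathcal{I}_r\}$. One writes
\[
\sum_{S\subseteq \mathcal{I}_r}|S|\,\widehat{f_R}(S)^2 \;=\; \sum_{i\in \mathcal{I}_r}\sum_{S\ni i,\,S\subseteq \mathcal{I}_r}\widehat{f_R}(S)^2,
\]
and interprets each inner sum, via the same two-configuration identity but with a forced disagreement at $i$, as a probability that $i$ is ``pivotal for the inside-then-outside arm decomposition'' of $f_R$. Standard arm-exponent bookkeeping---combining a local $4$-arm event around $i$ (probability $\asymp\alpha_4(|i|)$), a $1$-arm from scale $|i|$ to scale $r$ inside, and two independent $1$-arms from scale $r$ out to $\partial[-R,R]^2$---followed by a dyadic summation over $|i|\asymp 2^k$ for $2^k\leq r$ (the sum $\sum_k(2^k)^2\alpha_4(2^k)\asymp r^2\alpha_4(r)$ being dominated by its top term thanks to~\eqref{poly}), yields a total of order $r^2\alpha_4(r)\cdot\alpha_1(R)^2/\alpha_1(r)$. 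Dividing by $\|f_R\|^2=\alpha_1(R)$ gives $\widehat{\E}_{f_R}[|S|\,\un_{S\subseteq \mathcal{I}_r}]\lesssim r^2\alpha_4(r)\cdot\alpha_1(R)/\alpha_1(r)$, and Markov's inequality combined with the first paragraph delivers $\widehat{\Pro}_{f_R}[|S|<Cr^2\alpha_4(r)]\gtrsim\alpha_1(R)/\alpha_1(r)$ for a suitable $C$; this constant is then absorbed by working at a slightly smaller scale $r'\asymp r$ via~\eqref{poly} and~\eqref{quasimulti}.

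The matching upper bound is the main obstacle, and is where the clustering phenomenon discussed in Section~\ref{sectionspectralnotlocalized} enters: one must rule out the possibility that $S$ is small \emph{while} spilling outside $\mathcal{I}_r$. My plan would be the multi-scale scheme of~\cite{GPS}: decompose $\mathcal{I}_R\setminus \mathcal{I}_r$ into dyadic annuli $A_k=\mathcal{I}_{2^{k+1}r}\setminus \mathcal{I}_{2^k r}$, and for each $k\geq 1$ bound
\[
\widehat{\Pro}_{f_R}\bigl[S\cap A_k\neq\emptyset,\;|S|<r^2\alpha_4(r)\bigr]
\]
by a summable-in-$k$ fraction of $\alpha_1(R)/\alpha_1(r)$. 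The central input is a local clustering statement at scale $\rho=2^k r$: conditionally on $S$ touching a box of radius $\rho$, with large probability $|S|$ contains a positive fraction of its natural local size $\rho^2\alpha_4(\rho)$, which already exceeds $r^2\alpha_4(r)$ as soon as $k\geq 1$. This clustering is itself proved by an inductive second-moment argument on sub-dyadic scales, an argument which is in essence the content of Section~\ref{sectionspectralnotlocalized}; this is the hardest step. Summing the above bound over $k\geq 1$ closes the $\asymp$ estimate. The corollary is then immediate: since $\rho(l)^2\alpha_4(\rho(l))\geq l$ by definition of $\rho$, one has $\{|S|<l\}\subseteq\{|S|<\rho(l)^2\alpha_4(\rho(l))\}$, and the main estimate applied with $r=\rho(l)$ produces the stated upper bound.
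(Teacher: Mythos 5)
The paper does not contain its own proof of Theorem~\ref{GPScool}: it is cited from~\cite{GPS} (Theorem~7.3, see also Theorem~10.22 and Exercise~10.7 of~\cite{book}). Your proposal therefore has to stand on its own, and I think parts of it are on the right track but the upper bound contains a genuine gap in the argument as sketched.

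Your first two paragraphs are essentially the standard route to the lower bound: the two-configuration identity $\sum_{S\subseteq\mathcal{I}_r}\widehat{f_R}(S)^2=\E[f_R(\omega^1)f_R(\omega^2)]$ with $\omega^1,\omega^2$ coupled inside $\mathcal{I}_r$, giving $\widehat{\Pro}_{f_R}[S\subseteq\mathcal{I}_r]\asymp\alpha_1(R)/\alpha_1(r)$, followed by a first-moment estimate on $\{S\subseteq\mathcal{I}_r\}$ and a Paley--Zygmund/Markov argument. One bookkeeping slip: your per-site contribution ``$\alpha_4(|i|)\cdot\alpha_1(|i|,r)\cdot\alpha_1(r,R)^2$'' omits the one-arm event from $0$ to scale $|i|$. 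With that factor $\alpha_1(|i|)$ restored, quasi-multiplicativity gives $\alpha_1(|i|)\,\alpha_1(|i|,r)\asymp\alpha_1(r)$ and the dyadic sum then lands on your stated total $r^2\alpha_4(r)\cdot\alpha_1(R)^2/\alpha_1(r)$, so the conclusion is right even if the displayed bookkeeping isn't. The deduction of the Corollary from the definition of $\rho(\cdot)$ is also fine.

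The gap is in the upper bound. You describe an annulus-by-annulus union bound driven by a ``local clustering statement: conditionally on $S$ touching a box of radius $\rho$, with large probability $|S|$ contains a positive fraction of $\rho^2\alpha_4(\rho)$.'' That statement, in that conditional form, is not what is proved in~\cite{GPS}, and the scheme you outline is not theirs. The actual mechanism in~\cite{GPS} is global, not annulus-wise: one first controls the \emph{number of $r$-boxes touched}, $\widehat{\Pro}_{f_R}[|S_r|=k]\leq g(k)\alpha_1(R)/\alpha_1(r)$ (this is~\eqref{verysmallspectrum} above), and then converts box-count to cardinality via the weak-independence statement Proposition~\ref{weakindependence} and the large-deviation lemma Proposition~\ref{largedev}; no decomposition of $\mathcal{I}_R\setminus\mathcal{I}_r$ into dyadic annuli appears, and no per-annulus clustering estimate is stated or needed. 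What your sketch most closely resembles is in fact this paper's \emph{new} Theorem~\ref{soimportant} (take $r_0=2^kr$ and sum over $k$), but that theorem is a genuine strengthening of~\cite{GPS} requiring the extra machinery of Section~\ref{sectionspectralnotlocalized} ($r_0$-decorated annulus structures, the half-plane-conditioned $4$-arm event of Lemma~\ref{lemmerigolo}); it cannot be taken for granted in a proof of Theorem~\ref{GPScool}. Moreover, even if some per-annulus clustering bound were granted, you assert but do not justify that it is ``summable-in-$k$'': the bare fact that $\rho^2\alpha_4(\rho)>r^2\alpha_4(r)$ for $k\geq1$ does not by itself give geometric decay in $k$. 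You should either reproduce the three-step GPS argument (box-count, weak independence, large deviation) without the annulus decomposition, or, if you want the annulus route, explicitly invoke an estimate of the type of Theorem~\ref{soimportant}, check its summability using~\eqref{alpha4} and the FKG bound $\alpha_2\lesssim\alpha_1^2$, and acknowledge that this is strictly more than what~\cite{GPS} proves.
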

Using this spectral estimate (which is highly non-trivial), it is not very hard to deduce the existence of exceptional times for \textbf{i.i.d. dynamical percolation} on $\T$ and $\Z^2$ at $p=p_c$. Indeed writing:
\begin{align*}
\Eb{f_R(\omega(0)) f_R(\omega(t))} & = \sum_{S \subseteq \mathcal{I}_R} \widehat f_R(S)^2 e^{- t |S|} =  \widehat{\mathbb{E}}_{f_R} \left[ e^{- t |S|} \right] 
\end{align*}
and using the above theorem, it is rather straightforward to check that the hypothesis from Propositions \ref{secondmoment} and \ref{hausdorffcrit} are satisfied (see \cite{GPS,book}).

\subsection{Spectral representation of correlations in the conservative case}\label{ss.SC}

In order to apply the above strategy to a \textbf{$K$-exclusion dynamics}, the  first natural idea would be to decompose the Boolean functions $f_R$ and $g_n$ on an appropriate basis which diagonalizes the dynamics $t \mapsto \omega_K(t)$. Unfortunately, such basis are both non-local and non-explicit. Therefore, we still project the Boolean functions $f_R$ on the above basis $\{ \chi_ S\}_S$, at the cost of having additional non-diagonal terms.
As observed in \cite{BGS}, 
for all $S,S' \subseteq \mathcal{I}_R$ one has the following simple correlation structure
\begin{align}\label{e.Kt}
\E \left[ \chi_S(\omega_K(0)) \, \chi_{S'}(\omega_K(t)) \right] = K_t(S,S')\,,
\end{align}
where the transition matrix $K_t$ on sets $S \subseteq \mathcal{I}_R$ is defined as follows.
\begin{definition}
 If $S$ and $S'$ are two finite subsets of $E$ we write:
\begin{equation}\label{matrixS}
K_t(S,S') = \Pro \left[ \pi_t(S) = S' \right]\,,
\end{equation}
where $\pi_t$ is the random permutation used in Appendix \ref{a.graphical} to obtain a graphical construction of the exclusion process $t\mapsto \omega_K(t)$.
It is not difficult to see that $K_t$ is a symmetric transition matrix and that $K_t(S,S') = 0$ if $|S| \neq |S'|$. Hence, for any non-negative integers $k \leq l$, $K_t$ restricted to $\lbrace S \subseteq \mathcal{I}_R \, : \, |S| \in [k,l] \rbrace$ is still a symmetric transition matrix.
\end{definition}

\begin{remark}
The correlation formula~\eqref{e.Kt} is used throughout in \cite{BGS}. It is reminiscent of the so-called \textbf{duality formula} for exclusion processes. 
Note that our assumption that the kernels $K$ from Definition~\ref{d.kernels} are \textbf{symmetric} is crucial if one wants to rely on~\eqref{e.Kt}. 
\end{remark}

The importance of this duality formula (as used in \cite{BGS})  is due to its following consequence. One has for any Boolean function $h = h_R : \Omega_R = \{-1,1\}^{\calI_R}\to \{0,1\}$: 
\begin{equation}\label{premiersecondmoment}
\E \left[ h(\omega_K(0)) \, h(\omega_K(t)) \right] = \sum_{S,S' \subseteq \mathcal{I}_R} \widehat{h}(S) \widehat{h}(S') K_t(S,S') \, .
\end{equation}

\subsection{Outline of proof and new spectral estimates}\label{ss.outline}

Let us now give a short outline of the proof of our main result Theorem~\ref{tropbien}, and of its easier analogue Proposition~\ref{tropbienlog}. 
In order to prove that there exist exceptional times, it is sufficient to show 
(combining Proposition~\ref{secondmoment} with equation~\eqref{premiersecondmoment}) that there exists $C = C(K) < + \infty$ such that, uniformly in $R \geq 1$:
\begin{equation}\label{secondsecondmoment}
\int_0^1 \sum_{S,S' \subseteq \mathcal{I}_R} \sqrt{\widehat{\Pro}_{f_R}\left[ \lbrace S \rbrace \right]} \sqrt{\widehat{\Pro}_{f_R}\left[ \lbrace S' \rbrace \right]} \, K_t(S,S') \, dt \leq C \, \alpha_1(R) \, .
\end{equation}

(The fact that it is $\alpha_1(R)$ on the right-hand side instead of $\alpha_1(R)^2$ as in~\eqref{e.SM1} is due to the fact that the spectral measure $\widehat{\Pro}_{f_R}$ is renormalized by $\alpha_1(R)$, see equation~\eqref{e.SM}.) To obtain a lower-bound on the Hausdorff dimension of these exceptional times, one needs to show the following strengthening: 
there exists $C=C(K,\gamma) < + \infty$ such that for all $R \geq 1$:
\begin{equation}\label{secondsecondmomenthausdorff}
\int_0^1 \left( \frac{1}{t} \right)^{\gamma} \sum_{S,S' \subseteq \mathcal{I}_R} \sqrt{\widehat{\Pro}_{f_R}\left[ \lbrace S \rbrace \right]} \sqrt{\widehat{\Pro}_{f_R}\left[ \lbrace S' \rbrace \right]} \, K_t(S,S')\, dt \leq C \, \alpha_1(R) \, .
\end{equation}
As in Proposition~\ref{hausdorffcrit}, the larger the value of $\gamma$ is, the better the lower-bound on the Hausdorff dimension is. 

In order to explain the intuition which underlies our proofs, let us write informally the above sum as ``$ \, \Big< \, \sqrt{\widehat{\Pro}_{f_R}} \, , \, K_t \star \sqrt{\widehat{\Pro}_{f_R}} \, \Big> \, $". With this in mind, our purpose becomes to show \textbf{quantitatively} that ``$ \, \sqrt{\widehat{\Pro}_{f_R}}$ and $K_t \star \sqrt{\widehat{\Pro}_{f_R}} \, $ are asymptotically singular". One way to interpret this is as follows: if we let a spectral sample (of $f_R$) evolve under a $K$-exclusion process for some 
time $t>0$, then it does not look like a spectral sample any more. In other words, if we sample a spectral sample $\mathcal{S} \sim \widehat{\Pro}_{f_R}$ independently of our exclusion process and if we let $\pi_t$ be the permutations defined in~\eqref{defiofpi}, then, for any fixed $t>0$ and any $R$ sufficiently large, $\pi_t(\mathcal{S})$ does not look like a typical spectral sample any more. 
In order to prove this, one needs to identify ``almost sure'' properties of the spectral sample $\mathcal{S}\sim \widehat{\Pro}_{f_R}$ which will no longer hold (with high probability) for $\pi_t(\mathcal{S})$, namely after diffusion. The main mathematical issue we face here is that the actual purpose of the previous works about the spectral sample was to estimate its size (as one can see for example from the above Theorem~\ref{GPScool} from~\cite{GPS}). 
This is not interesting for our purpose since $\mathcal{S}$ and $\pi_t(\mathcal{S})$ have equal size. 
%
What will help us is that the strategy in~\cite{GPS} is to study closely the geometry of the spectral sample. As such, our strategy will consist in  identifying  ``almost sure'' geometric properties of $\mathcal{S}\sim \widehat{\Pro}_{f_R}$ which will no longer hold for $\pi_t(\mathcal{S})$. 

This strategy is close to the strategy of~\cite{BGS} for the proof of exclusion sensitivity of the left-right crossing events. There is a significant difference though (very similar to the difference between \cite{BKS} and \cite{SS10}) as we need to obtain quantitative bounds essentially on the \textbf{``lower tail''} (i.e. on the atypically small spectral samples $|\mathcal{S}| \ll  \widehat{\E}_{f_R} \left[ |S| \right]$). The difficulty behind this is that we will need to find singularities for all sizes of spectral samples. More precisely, let $g_n$ be the indicator function of the crossing of $[-n,n]^2$ from left to right. In order to prove that $(g_n)_n$ is $K$-exclusion sensitivity, the authors of~\cite{BGS} had to show that:
\[
\sum_{\emptyset \neq S,S' \subseteq \mathcal{I}_n} \sqrt{\widehat{\Pro}_{g_n}\left[ \lbrace S \rbrace \right]} \sqrt{\widehat{\Pro}_{g_n}\left[ \lbrace S' \rbrace \right]} \, K_t(S,S') \underset{n \rightarrow +\infty}{\longrightarrow} 0 \ .
\]
Thanks to~\cite{GPS}, Theorem~$1.1$ (and thanks to the Cauchy-Schwarz inequality and the Markov property of $K_t(S,\cdot)$), it is easy to see that:
\[
\sum_{\emptyset \neq S,S' \subseteq \mathcal{I}_n \, : \atop |S| = |S'| \ll \widehat{\E}_{g_n} \left[ |S| \right] \text{ or } |S| = |S'| \gg \widehat{\E}_{g_n} \left[ |S| \right]} \sqrt{\widehat{\Pro}_{g_n} \left[ \lbrace S \rbrace \right]} \sqrt{\widehat{\Pro}_{g_n}\left[ \lbrace S' \rbrace \right]} \, K_t(S,S') \underset{n \rightarrow +\infty}{\longrightarrow} 0 \, .
\]
This way, the authors of~\cite{BGS} only had to take into account the sets $S$ whose size is roughly $\widehat{\E}_{g_n} \left[ |S| \right]$. This made the analysis in~\cite{BGS} easier as in this regime, the spectral sample $\calS \sim \widehat{\Pro}_{g_n}$ is known to be essentially ``fractal''. In our present setting, one cannot avoid a detailed analysis of what happens in the lower tail. Indeed if one were to apply the same trick (Cauchy-Schwarz and Markov property) to small spectral sets of size $|\mathcal{S}| < r^2 \alpha_4(r)$ with $r\ll R$ and $\mathcal{S} \sim \widehat{\Pro}_{f_R}$, then one would obtain thanks to Theorem \ref{GPScool} the following bound: For all $t>0$,
\[
 \sum_{\emptyset \neq S,S' \subseteq \mathcal{I}_R \, : \atop |S| = |S'| < r^2 \alpha_4(r) } \sqrt{\widehat{\Pro}_{f_R}\left[ \lbrace S \rbrace \right]} \sqrt{\widehat{\Pro}_{f_R}\left[ \lbrace S' \rbrace \right]} \, K_t(S,S') \leq \grandO{1}  \frac {\alpha_1(R)}{\alpha_1(r)} \, .
\]
Clearly, such a bound is not quantitative enough to imply what we need, namely:
\[
\int_0^1 \sum_{S,S' \subseteq \mathcal{I}_R } \sqrt{\widehat{\Pro}_{f_R}\left[ \lbrace S \rbrace \right]} \sqrt{\widehat{\Pro}_{f_R}\left[ \lbrace S' \rbrace \right]} \, K_t(S,S')\, dt \leq \grandO{1} \alpha_1(R) \, .
\]
Because of this, we are required to identify a geometric singularity between $\calS \sim \widehat {\Pro}_{f_R}$ and $\pi_t(\calS)$ even when $\calS$ is atypically small. In other words, we need to quantify the singularity between the sub-probability measures (when $r\ll R$):
\[
\un_{|S|<r^2 \alpha_4(r)}\widehat{\Pro}_{f_R}(dS) \text{ \;  and \;   } K_t \star \left[ \un_{|S|<r^2 \alpha_4(r)} \widehat{\Pro}_{f_R}(dS) \right].
\] 
Imagine for a second that such small spectral sets typically looked (under the conditional measure $\widehat{\Pro}_{f_R}\left[ \; \cdot \; \mid \; |S|<r^2 \alpha_4(r) \right]$) like macroscopic ``Poissonnian clouds'' of points. In that case, the above sub-probability measures would even be ``absolutely continuous'' with respect to each other. To prevent this, the geometric feature of these small spectral sets which will help us detecting singularity is a certain \textbf{clustering effect} which will be proved and made quantitative in this paper (see Theorem \ref{soimportant} below). More precisely, under the conditional measure $\widehat{\Pro}_{f_R}\left[ \; \cdot \; \mid \; |S|<r^2 \alpha_4(r) \right]$, spectral sets tend to be of ``small'' diameter. Note that such a clustering effect is far from being obvious (techniques from \cite{GPS} are not well designed for such properties) and it is still an open-problem for the left-right crossing events $g_n$, see Conjecture \ref{c.LR}. Summarising the above discussion, our proof of Theorem \ref{tropbien} is divided into the following two independent steps (see also Figure \ref{f.strategy}).

\bnum
\item[A.] \textbf{Clustering property for small radial spectral sets.} This step corresponds to Theorem \ref{soimportant} below (and its Corollary \ref{soimportantcor}). Note that this step of the proof is purely {\em static} (no dynamics here). It will be the purpose of Section \ref{sectionspectralnotlocalized}. 
 
\item[B.] \textbf{From clustering to singularity to exceptional times.} The second step of the proof 
consists in implementing the above clustering property into a sufficiently quantitative singular behaviour in order to obtain existence of exceptional times (main Theorem~\ref{tropbien}). 
This second step will be the purpose of Section~\ref{sectionexcepcriticality}.
\enum

\begin{figure}[!htp]
\begin{center}
\includegraphics[width=\textwidth]{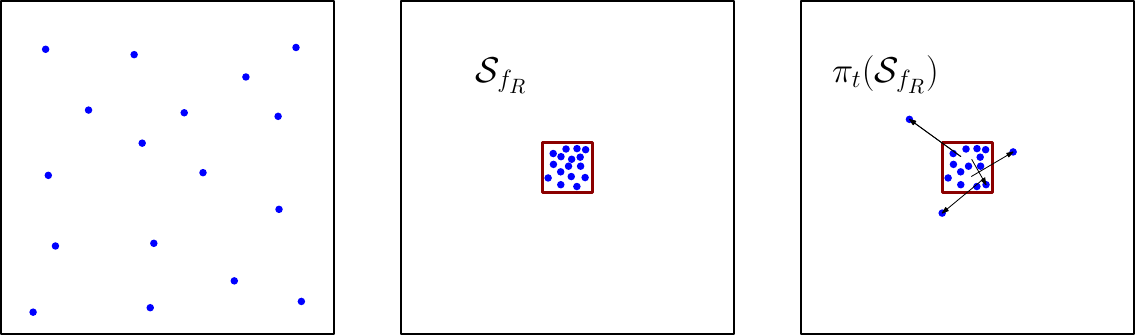}
\end{center}
\caption{This picture illustrates our strategy for our spectral analysis of the lower tail under a $K^\alpha$-exclusion dynamics. If spectral samples of radial events $f_R$ were to look like ``sparse'' random subsets of $\mathcal{I}_R$ as pictured on the left, it would be very hard to detect any singular behaviour between $\mathcal{S}$ and $\pi_t(\mathcal{S})$. This is why we provide a quantitative clustering property in Theorem~\ref{soimportant} and Corollary~\ref{soimportantcor} which shows that small spectral samples are with high probability concentrated in small balls (middle square). Once we combine all quantitative estimates, one can show that when the exponent $\alpha$ is chosen small enough, this clustering property does not hold any more after diffusion for $\pi_t(\mathcal{S})$ as pictured on the right square. This is how we manage to detect the desired (quantitative) singularity.}\label{f.strategy}
\end{figure}

We end this subsection with more details for steps A and B.

\paragraph{A. Clustering property.}
Our main result on the clustering property of the spectral sample of the \textbf{radial crossing event} $f_R$ can be stated as follows. In this result, we estimate the probability of a small residual spectral mass away from the origin.
\begin{thm}\label{soimportant}
There exist an exponent  $\epsilon > 0$ and a constant $C < +\infty$ such that, for all $1 \leq r \leq r_0 \leq R/2$:
\[
\widehat{\Pro}_{f_R} \left[ 0 < |S \setminus (-r_0,r_0)^2| < r^2 \alpha_4(r) \right] \leq C \, \frac{\alpha_1(R)}{\alpha_1(r_0)} \left( \frac{r_0}{r} \right)^{1-\epsilon} \alpha_4(r,r_0) \, .
\]
(As explained in Remark \ref{r.epscrucial}, this exponent $\eps>0$ even very small is crucial for the existence of exceptional times on $\Z^2$. It is related to the geometric event discussed in Appendix \ref{a.eps}, see also Remark \ref{aquantitaiveepsilon}.) 
\end{thm}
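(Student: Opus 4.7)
The proof extends the multi-scale spectral analysis of \cite{GPS} with a refined clustering step. Let me set $B := (-r_0,r_0)^2 \cap \calI_R$ and write $\calS_{\mathrm{out}} := \calS \setminus B$, where $\calS \sim \widehat\Pro_{f_R}$. The first step is a dyadic decomposition of the event $\calE := \{0 < |\calS_{\mathrm{out}}| < r^2 \alpha_4(r)\}$ into sub-events indexed by a dyadic scale $\rho \in \{r,2r,\ldots,R\}$ and a dyadic axis-parallel square $Q \subseteq \calI_R \setminus B$ of side $\rho$ which is the smallest (in some fixed ordering) enclosing $\calS_{\mathrm{out}}$. Write $d := r_0 \vee \dist(Q,0)$ for the outermost scale carrying spectral mass. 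The crucial observation is that on this event, since $|\calS \cap Q| \leq |\calS_{\mathrm{out}}| < r^2 \alpha_4(r) \leq \rho^2 \alpha_4(\rho)$ whenever $\rho \geq r$, the mass of $\calS$ inside $Q$ is atypically small as soon as $\rho > r$, and it is this atypicality that will drive the estimate.

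For fixed $(\rho,Q)$, the next step is to bound
\[
p_{\rho,Q} := \widehat\Pro_{f_R}\bigl[\calS_{\mathrm{out}} \subseteq Q,\ \calS \cap Q \neq \emptyset,\ |\calS \cap Q| < r^2 \alpha_4(r)\bigr].
\]
Using the spectral identities from \cite{GPS}, Section~4 (which express $\widehat\Pro_{f_R}[\,\cdot\,]$ in terms of conditional expectations of $f_R$), combined with the standard arm-event factorization of the radial crossing, $p_{\rho,Q}$ is bounded, up to a multiplicative constant, by the product of three essentially independent contributions: (a) an \emph{outer} factor $\alpha_1(R)/\alpha_1(d)$ arising from extending the one-arm event past the outermost scale $d$ carrying spectral mass; (b) a \emph{bridging} factor $\alpha_4(\rho,d)$ corresponding to the simultaneous presence of spectral mass in $B$ and in $Q$, through the four-arm spectral characterization of a disjoint piece of the sample; and (c) a \emph{clustering} factor bounded by $C(\rho/r)^{1-\epsilon}\alpha_4(r,\rho)$, estimating the probability that $\calS \cap Q$ is non-empty but atypically small inside $Q$.

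The exponent $1-\epsilon$ (rather than $1$) in factor (c) is the main novelty. The trivial bound --- applying Theorem~\ref{GPScool} inside $Q$ and summing over the scale of the smallest sub-square containing $\calS \cap Q$ --- only yields exponent $1$, which (as emphasized in Remark~\ref{r.epscrucial}) would be insufficient for the exceptional-times application. To extract the strict improvement $-\epsilon$, the plan is to isolate a specific geometric event at the smallest scale which must fail, with strictly positive polynomial cost, whenever the spectral mass inside $Q$ is non-empty but atypically sparse. The precise event and the quantitative bound on its probability are the subject of Appendix~\ref{a.eps}: on the triangular lattice, a concrete value of $\epsilon$ can be read off via the explicit arm exponents \eqref{5/48}--\eqref{5/4}; on $\Z^2$, its positivity follows from \eqref{alpha4} combined with standard RSW techniques.

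Finally, I sum $p_{\rho,Q}$ over dyadic $\rho \in [r,R/2]$ and over admissible positions of $Q$ (at a given scale $d$ there are $\sim (d/\rho)^2$ possible positions). The quasimultiplicativity \eqref{quasimulti} of $\alpha_4$ together with the polynomial decays \eqref{poly}, \eqref{alpha3+}, and \eqref{alpha4} collapse the double sum into a geometric series whose dominant contribution comes from $\rho = r_0$, $d = r_0$, yielding the bound stated in the theorem. The genuine obstacle is the clustering step in factor (c): the spectral machinery of \cite{GPS} is designed to capture the typical (fractal) geometry of the sample and does not automatically provide a strictly positive polynomial gap in the sparse regime. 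Identifying the right geometric event and bounding it quantitatively, uniformly on both $\Z^2$ and $\T$, is therefore the substantive new ingredient of the proof.
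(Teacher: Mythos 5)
The paper's proof of Theorem~\ref{soimportant} proceeds in two stages: first it establishes Proposition~\ref{smallspectrumbis}, a bound on $\widehat\Pro_{f_R}[|S_r^{r_0}|=k]$ via a new multi-scale combinatorial object (the ``$r_0$-decorated centered annulus structures''), and only then runs the box--tiling and large-deviations argument (Propositions~\ref{weakindependence} and~\ref{largedev}) exactly as in the proof of Theorem~7.3 of~\cite{GPS}. Your proposal takes a different route and has two substantive gaps.

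First, your decomposition conditions on the event that a fixed dyadic square $Q$ is the \emph{smallest} enclosing square of $\calS_{\mathrm{out}}$. That event mixes negative information ($\calS_{\mathrm{out}}\subseteq Q$, i.e.\ no spectral mass in $Q^c\setminus B$) with positive information ($\calS_{\mathrm{out}}$ touches each of the four ``margins'' of $Q$, so that no smaller sub-square contains it). The GPS spectral machinery --- in particular Proposition~\ref{weakindependence}, which is what ultimately feeds the large-deviations step --- only tolerates \emph{negative} conditioning of the form $\calS\cap W=\emptyset$. Remark~\ref{thepbbis} explains that precisely this obstruction is why Conjecture~\ref{conjquiseraitcool} remains open; your decomposition runs into the same wall. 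The paper sidesteps it by never conditioning on a smallest enclosing box: it sums over compatible annulus structures, which is pure negative information plus the requirement that the inner boxes of non-centered annuli be hit.

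Second, and more importantly, you misattribute the source of the exponent $1-\epsilon$. You place it at ``the smallest scale,'' arguing that an atypically sparse spectral sample inside $Q$ must fail some fixed-scale geometric event. In the paper the improvement has nothing to do with the smallest scale: it comes from the annuli centered on $\partial[-r_0,r_0]^2$ (the ``$r_0$-clusters''), for which the relevant weight is not $h(A)^2=\alpha_4(\rho,\rho')^2$ but $h^{r_0}(A)^2=\E_{1/2}\bigl[\Pro_{1/2}[4\text{-arm in }A\mid\calF_{\calI_R\cap(-r_0,r_0)^2}]^2\bigr]$, because only the spectral mass \emph{outside} $(-r_0,r_0)^2$ is constrained by the structure. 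Lemma~\ref{lemmerigolo} then shows this conditioned quantity satisfies $h^{r_0}(A)^2\leq C\,\alpha_4(\rho,\rho')(\rho'/\rho)^{-\epsilon}$, which is what replaces the naive $\alpha_4^2$ by a single power of $\alpha_4$ with an additional polynomial gain. Your ``bridging factor'' $\alpha_4(\rho,d)$ is in the right spirit of being a single power of $\alpha_4$, but you give no mechanism for why the square disappears --- that mechanism is exactly the half-plane conditioning, and proving it is essentially the content of Lemma~\ref{lemmerigolo}. (Relatedly, on $\T$ the exponent $\zeta_4^{|\half}$ is \emph{not} computable from the standard arm exponents \eqref{5/48}--\eqref{5/4}; the paper only bounds it in $(5/4,5/2]$ via Lemma~\ref{lemmerigolo}.) Finally, the new spectral inequality needed to push the half-plane conditioning through the annulus-structure bookkeeping is Lemma~\ref{JP}, a genuine extension of Lemma~2.2 of~\cite{GPS} to jointly pivotal sets; nothing in your proposal plays its role.
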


Theorem~\ref{soimportant} will be proved in Subsection~\ref{sectionlittlepart}. 
Its proof is mostly inspired by the global proof in~\cite{GPS}. There are three main steps in~\cite{GPS}, which correspond to Sections~4,~5 and~6. We will adapt the first step and then use the two other steps identically to Sections~5 and~6 of~\cite{GPS}. As the treatment of the first step differs in at least three key places, we provide a reasonably self-contained proof in Subsection~\ref{sss.verylittle}. To help identifying the differences with the proof in Section~$4$ of~\cite{GPS}, here are the three main ones: 
\bnum
\item First, one needs to introduce a new combinatorial \textbf{annulus structure} which is designed to analyse the spectral sample outside of some mesoscopic scale $(-r_0,r_0)^2$. 
\item In order to analyse this modified annulus structure, we need to introduce a new geometric percolation exponent (which is the exponent of the ``$4$-arm event conditioned on the percolation configuration in a half-plane'', see Lemma~\ref{lemmerigolo} and Lemma~\ref{exponenthalfplane}). This conditioned percolation event 
is at the root of the exponent $\eps>0$ in Theorem~\ref{soimportant} and will play a significant role while analysing the modified annulus structure. A key estimate on this conditioned percolation event which is also valid on $\Z^2$ is proved in Lemma~\ref{lemmerigolo}. See Remark~\ref{aquantitaiveepsilon} for the link between $\eps$ and this conditioned event and Remark~\ref{r.epscrucial} for the importance of $\eps>0$ in our proof of existence of exceptional times on $\Z^2$.
\item Finally, we need to adapt the useful spectral estimate Lemma~4.8 of \cite{GPS} to our annulus structure. That is the purpose of Lemma~\ref{generalizedannulusstructuresinequality} and Appendix \ref{a.B}.
\enum 
We shall also use extensively in Section \ref{sectionexcepcriticality} the following immediate corollary of Theorem~\ref{soimportant}, where we analyse the circumstance of a spectral sample of atypically high diameter given its size (see after the proof of Corollary~\ref{soimportantcor} for a further discussion):

\begin{cor}\label{soimportantcor} Let $\epsilon$ be the same constant as in Theorem~\ref{soimportant}. Then:
\begin{enumerate}
\item There exists a constant $C<+\infty$ such that, for all $1 \leq r \leq r_0$ and all $R \geq 1$:
\[
\widehat{\Pro}_{f_R} \left[ |S| < r^2 \alpha_4(r), \, S \nsubseteq (-r_0,r_0)^2 \right] \leq C \, \frac{\alpha_1(R)}{\alpha_1(r_0)} \left( \frac{r_0}{r} \right)^{1-\epsilon} \alpha_4(r,r_0) \, .
\]
We will use this result as follows:
\item Consider $\beta>1$ such that there exists some $\overline{k}=\overline{k}(\beta)$ such that for all $k \geq \overline{k}$ we have $\rho(2^{k+1}) \leq 2^{k\beta}$ (see Theorem~\ref{GPScool} for the definition of $\rho$). Then, there exists a constant $C=C(\beta)<+\infty$ such that, for all $k \in \N$ and all $R \geq 1$:
\[
\widehat{\Pro}_{f_R} \left[ |S| < 2^{k+1}, S \not\subseteq (-2^{k\beta},2^{k\beta})^2  \right] \leq C \, \frac{\alpha_1(R)}{\alpha_1(2^{k\beta})} \left( \frac{2^{k\beta}}{\rho(2^{k})} \right)^{1-\epsilon} \alpha_4(\rho(2^{k}),2^{k\beta}) \, .
\]
\end{enumerate}
\end{cor}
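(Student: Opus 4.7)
The plan is to deduce both parts of Corollary~\ref{soimportantcor} directly from Theorem~\ref{soimportant}, essentially by inclusion of events and by an appropriate choice of parameters.

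For part~1, I simply observe that on the event $\{|S|<r^2\alpha_4(r),\ S\nsubseteq(-r_0,r_0)^2\}$ one automatically has
\[
0<|S\setminus(-r_0,r_0)^2|\leq|S|<r^2\alpha_4(r),
\]
the left inequality because $S$ has at least one element outside the square $(-r_0,r_0)^2$, and the right because $|S\setminus(-r_0,r_0)^2|\leq|S|$. Hence the event in part~1 is contained in the event appearing in the statement of Theorem~\ref{soimportant}, and the bound follows immediately with the same constant~$C$ and exponent~$\epsilon$.

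For part~2, I want to apply part~1 with $r_0:=2^{k\beta}$ and a suitable $r$. The natural choice is $r:=\rho(2^{k+1})$, because by the very definition of $\rho$ one has $r^2\alpha_4(r)\geq 2^{k+1}$, and therefore $\{|S|<2^{k+1}\}\subseteq\{|S|<r^2\alpha_4(r)\}$. The hypothesis on $\beta$ guarantees that $r=\rho(2^{k+1})\leq 2^{k\beta}=r_0$ for every $k\geq\overline{k}(\beta)$, so the applicability condition $r\leq r_0$ of part~1 is met. Applying part~1 then yields exactly the right-hand side of the claimed inequality, except that $\rho(2^{k+1})$ appears in place of $\rho(2^{k})$. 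To close this minor gap I use that $\rho(2^{k+1})\asymp\rho(2^{k})$ up to a universal constant: this follows from the polynomial bounds~\eqref{poly} on $\alpha_4$, which imply that $r\mapsto r^2\alpha_4(r)$ grows quasi-polynomially, so doubling its output requires only multiplying its argument by a bounded factor. Combined with the quasi-multiplicativity~\eqref{quasimulti} of $\alpha_4$ and the elementary fact that $(r_0/r)^{1-\epsilon}$ is stable under replacing $r$ by a bounded multiple of itself, these adjustments can be absorbed into a $\beta$-dependent constant~$C$. For the finitely many $k<\overline{k}(\beta)$, the stated inequality holds trivially after further inflating $C$, since the left-hand side is always at most $\widehat{\Pro}_{f_R}[S\neq\emptyset]\lesssim \alpha_1(R)$, while the right-hand side is bounded below by a positive multiple of $\alpha_1(R)$ in any finite range of~$k$.

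I do not anticipate any real obstacle here: part~1 is a tautological inclusion given Theorem~\ref{soimportant}, and part~2 is just a substitution followed by cosmetic use of quasi-multiplicativity. The only point that requires a short verification is the asymptotic equivalence $\rho(2^{k+1})\asymp\rho(2^{k})$, but this is routine from~\eqref{poly} and the definition of $\rho$ and presents no genuine difficulty. Thus the corollary is essentially an immediate packaging of Theorem~\ref{soimportant} into the form that will actually be applied in Section~\ref{sectionexcepcriticality}.
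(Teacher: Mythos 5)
Your overall strategy mirrors the paper's, but there are two genuine gaps.

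\textbf{Gap in part~1.} Theorem~\ref{soimportant} is stated only for $1\leq r\leq r_0\leq R/2$. Your inclusion
\[
\left\{|S|<r^2\alpha_4(r),\ S\nsubseteq(-r_0,r_0)^2\right\}\subseteq\left\{0<|S\setminus(-r_0,r_0)^2|<r^2\alpha_4(r)\right\}
\]
is correct, but it lets you apply the theorem only when $r_0\leq R/2$. The corollary, however, is claimed for \emph{all} $1\leq r\leq r_0$ and $R\geq 1$, so you must also dispose of the regime $r_0>R/2$. The paper handles this separately: when $r_0>R+2$ the left-hand side vanishes because $\mathcal{I}_R\subseteq[-(R+2),R+2]^2$, and when $R/2<r_0\leq R+2$ one includes the event into the case $r_0'=R/2$ and absorbs the resulting overhead via quasi-multiplicativity and~\eqref{poly}. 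Without this your proof of part~1 is incomplete.

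\textbf{Gap in part~2, small $k$.} Your claim that the left-hand side is at most $\widehat{\Pro}_{f_R}[S\neq\emptyset]\lesssim\alpha_1(R)$ is false: $\widehat{\Pro}_{f_R}$ is a genuine probability measure (normalized by $\alpha_1(R)$), so $\widehat{\Pro}_{f_R}[S\neq\emptyset]=1-\alpha_1(R)$, which tends to $1$, not to $0$. To obtain the needed factor $\alpha_1(R)$ for fixed small $k$ one must instead invoke Theorem~\ref{GPScool}, which gives $\widehat{\Pro}_{f_R}[|S|<2^{k+1}]\leq C\,\alpha_1(R)/\alpha_1(\rho(2^{k+1}))$; for $k$ bounded this is $\lesssim\alpha_1(R)$, and the right-hand side of the claim is bounded below by a constant multiple of $\alpha_1(R)$ for each fixed $k<\overline{k}$. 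This is exactly how the paper treats case~(a).

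A third, more minor point: the statement $\rho(2^{k+1})\lesssim\rho(2^k)$ does not follow from~\eqref{poly} alone — the polynomial lower bound on $\alpha_4$ could in principle have exponent larger than $2$, in which case $r\mapsto r^2\alpha_4(r)$ need not increase. One needs the left-hand inequality of~\eqref{alpha4} (which says the $4$-arm decay exponent is strictly less than $2$) together with quasi-multiplicativity, as the paper points out. Your argument is salvageable once this is cited correctly.
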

\begin{proof} We first prove item~$1$. We distinguish between three cases : \textit{(a)} If $r_0 \leq R/2$, then this is a direct consequence of Theorem~\ref{soimportant} since:
\[
\left\lbrace |S| < r^2 \alpha_4(r), \, S \not\subseteq (-r_0,r_0)^2 \right\rbrace \subseteq \left\lbrace 0 < |S \setminus (-r_0,r_0)^2| < r^2 \alpha_4(r) \right\rbrace \, .
\]
(We may have lost a lot in this inclusion, see Conjecture~\ref{conjquiseraitcool}.) \textit{(b)} If $r_0 > R+2$ then the left-hand side equals $0$ (since we have $\mathcal{I}_R \subseteq [-(R+2),R+2]^2$). \textit{(c)} If $R/2 < r_0 \leq R+2$, then this is a simple consequence of the case $r_0=R/2$ (together with the quasi-multiplicativity property and~\eqref{poly}) since we have:
\[
\left\lbrace |S| < r^2 \alpha_4(r), \, S \not\subseteq (-r_0,r_0)^2 \right\rbrace \subseteq \left\lbrace |S| < r^2 \alpha_4(r), \, S \not\subseteq (-R/2,R/2)^2 \right\rbrace \, .
\]

Let us now prove item~$2$. We distinguish between two cases: \textit{(a)} if $k < \overline{k}$, such an estimate is a direct consequence of Theorem~\ref{GPScool}. \textit{(b)} We now assume that $k \geq \overline{k}$. Then, this is a direct consequence of item~$1$ since we have $1 \leq \rho(2^{k+1}) \leq 2^{k\beta}$ and $\rho(2^{k+1})^2 \, \alpha_4(\rho(2^{k+1})) \geq 2^{k+1}$ (actually, it is not difficult to see that this last inequality is an equality). Note also that we have used the fact that $\rho(2^{k+1}) \leq \grandO{1} \rho(2^k)$, which is a simple consequence of the quasi-multiplicativity property and the left-hand inequality of~\eqref{alpha4}.
\end{proof}

At the level of this outline, let us analyse a little more the results of Theorem~\ref{soimportant} and Corollary~\ref{soimportantcor}. These results imply that if the spectral sample is small then it is ``localized in the neighbourhood of the origin":

Let us estimate $\widehat{\Pro}_{f_R} \left[ S \subseteq (-r_0,r_0)^2 \cond 0 < |S| < r^2 \alpha_4(r) \right]$. Thanks to Theorem~\ref{GPScool} and Corollary~\ref{soimportantcor} (together with the quasi-multiplicativity property), we have:
\begin{eqnarray*}
\widehat{\Pro}_{f_R} \left[ S \nsubseteq (-r_0,r_0)^2 \cond 0 < |S| < r^2 \alpha_4(r) \right] & \leq & \grandO{1} \frac{\alpha_1(r)}{\alpha_1(r_0)} \left( \frac{r_0}{r} \right)^{1-\epsilon} \alpha_4(r,r_0)\\
& \leq & \grandO{1} \frac{\alpha_4(r,r_0)}{\alpha_1(r,r_0)} \left( \frac{r_0}{r} \right)^{1-\epsilon} \, .
\end{eqnarray*}
The right-hand inequality of~\eqref{alpha4} and the FKG inequality imply that the above is at most $\grandO{1} \left( \frac{r_0}{r} \right)^{-\epsilon} \frac{\sqrt{\alpha_2(r,r_0)}}{\alpha_1(r,r_0)} \leq \grandO{1} \left( \frac{r_0}{r} \right)^{-\epsilon} \;$\footnote{Here we can see the importance of the constant $\epsilon$ in Theorem~\ref{tropbien}, even very small.}, which goes to $0$ as $r/r_0$ goes to $0$. (In the case of $\T$, thanks to the computation of the critical exponents we even know that $\frac{\alpha_4(r,r_0)}{\alpha_1(r,r_0)} \left( \frac{r_0}{r} \right)^{1-\epsilon} = \left( \frac{r_0}{r} \right)^{-7/48-\epsilon+\petito{1}}$.)
\medskip

We now state the (easier) analogue of Corollary~\ref{soimportantcor} which will be relevant for the long-range dynamics $K^a_{\log}$, namely to prove Proposition~\ref{tropbienlog}:
\begin{prop}\label{soimportantlog}
For all $\epsilon_0>0$ there exists a constant $C=C(\epsilon_0)<+\infty$ such that, for all $k \in \N_+$ and all $R \geq 1$:
\[
\widehat{\Pro}_{f_R} \left[ |S| < k, S \not\subseteq (-\exp(k^{\epsilon_0}),\exp(k^{\epsilon_0}))^2 \right] \leq C \, \alpha_1(R) \, \exp(-k^{\epsilon_0}/C) \, .
\]
\end{prop}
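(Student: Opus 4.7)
The plan is to deduce Proposition~\ref{soimportantlog} directly from Corollary~\ref{soimportantcor}(1), using Theorem~\ref{GPScool} for the few initial values of $k$. The decisive point is that the super-polynomial window $\exp(k^{\epsilon_0})$ is so much larger than any polynomial scale that all polynomial losses are absorbed by an arbitrarily mild weakening of the exponential decay; in particular, no explicit information about the one-arm exponent is required, so the argument is uniform in $\T$ and $\Z^2$.

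Concretely, I would apply Corollary~\ref{soimportantcor}(1) with inner scale $r := \rho(k)$ and outer scale $r_0 := \exp(k^{\epsilon_0})$. This is legitimate as soon as $\rho(k) \leq r_0$, which holds for all but a bounded range of $k$ because $\rho(k) \leq k^{O(1)}$ by~\eqref{alpha4} and~\eqref{poly}, while $r_0$ is super-polynomial in $k$. The corollary then yields
\[
\widehat{\Pro}_{f_R}\!\left[|S| < k,\; S \not\subseteq (-r_0, r_0)^2\right] \;\leq\; C\,\alpha_1(R)\cdot\frac{1}{\alpha_1(r_0)}\left(\frac{r_0}{r}\right)^{\!1-\epsilon}\alpha_4(r, r_0).
\]

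The crux is to simplify the right-most factor. I would combine the Schramm--Steif-type bound $\alpha_4(r,r_0)\leq C(r/r_0)\sqrt{\alpha_2(r,r_0)}$ from~\eqref{alpha4} with Reimer's inequality applied to two arms of opposite colours (whose witnesses are automatically disjoint), which gives $\alpha_2(r,r_0)\leq \alpha_1(r,r_0)^2$ on both models. Together with the quasi-multiplicativity~\eqref{quasimulti} $\alpha_1(r,r_0)\asymp\alpha_1(r_0)/\alpha_1(r)$, this collapses the factor above to $C(r/r_0)^{\epsilon}/\alpha_1(r)$, completely eliminating any dependence on $\alpha_1(r_0)$ (hence on the one-arm exponent). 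Plugging in $r=\rho(k)\leq k^{O(1)}$ (so that $1/\alpha_1(\rho(k))\leq k^{O(1)}$ by~\eqref{poly}) and $r_0=\exp(k^{\epsilon_0})$ gives $\leq k^{O(1)}\exp(-\epsilon\, k^{\epsilon_0})$, which is dominated by $\exp(-k^{\epsilon_0}/C)$ after a harmless enlargement of $C$.

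For the bounded remaining range $k < k^{*}(\epsilon_0)$ in which $\rho(k)$ exceeds $r_0$, Theorem~\ref{GPScool} directly gives $\widehat{\Pro}_{f_R}[|S|<k]\leq C\alpha_1(R)/\alpha_1(\rho(k))\leq C'\alpha_1(R)$ since $\rho(k)$ is then bounded by $\rho(k^{*})=O_{\epsilon_0}(1)$, and this is $\leq C\alpha_1(R)\exp(-k^{\epsilon_0}/C)$ once $C$ is chosen large enough, because $\exp(-k^{\epsilon_0}/C)$ is bounded away from $0$ for $k\leq k^{*}$. I do not anticipate any genuine obstacle: the heart of the argument is a routine substitution in the static clustering estimate of Corollary~\ref{soimportantcor}, and Reimer's inequality is precisely the tool that lets one avoid any quantitative exponent information, making this the ``easy'' analogue that the statement advertises.
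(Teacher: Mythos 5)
Your proposal is correct and corresponds to the first of the two routes the paper itself identifies: at the end of Subsection~2.5 the authors explicitly state that Proposition~\ref{soimportantlog} ``is a simple consequence of Corollary~\ref{soimportantcor} (and of Theorem~\ref{GPScool} for $k$ small),'' which is exactly what you carry out. Your algebra for killing the $1/\alpha_1(r_0)$ factor — combining $\alpha_4(r,r_0)\lesssim (r/r_0)\sqrt{\alpha_2(r,r_0)}$ with $\alpha_2\lesssim\alpha_1^2$ and quasi-multiplicativity — mirrors the computation the paper performs in~\eqref{deuxiemeestimatedelta}, and the treatment of the bounded range of $k$ via Theorem~\ref{GPScool} is as anticipated. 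The paper's written-out proof in Subsection~\ref{sectionlog}, however, deliberately follows its \emph{second} route: it quotes Remark~4.5 and Subsections~4.2/4.4 of~\cite{GPS}, which already contain (at the super-polynomial tolerance relevant here) a clustering estimate for atypically small spectral samples, so that Proposition~\ref{soimportantlog} can be obtained without invoking Theorem~\ref{soimportant} and the new $r_0$-decorated annulus-structure machinery of Section~5.2. Your route is logically self-contained within the paper but technically heavier, resting on the paper's main clustering theorem; the paper's route is lighter and emphasises that this ``easy'' log-range case needs none of the new estimates. One cosmetic point: the paper attributes the bound $\alpha_2\leq O(1)\alpha_1^2$ to FKG (applied to one increasing and one decreasing event), whereas you attribute it to Reimer/BK; both yield the same inequality here.
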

Proposition~\ref{soimportantlog} is a simple consequence of Corollary~\ref{soimportantcor} (and of Theorem~\ref{GPScool} for $k$ small), but is also a direct consequence of some results of Section~$4$ of~\cite{GPS}, see Subsection~\ref{sectionlog}.

\paragraph{B. From clustering to singularity.} 
Let us give a short heuristics which explains how to derive our main result Theorem~\ref{tropbien} using the above clustering property.
Let $\mathcal{S} \sim \widehat{\Pro}_{f_R}$ be a spectral sample independent of our exclusion process. Remember that we want to show that for all $t>0$ if $R$ is sufficiently large, then $\pi_t(\mathcal{S})$ does not look like a spectral sample with high probability. 
Corollary~\ref{soimportantcor} implies that, if $\beta$ is large enough, then $\mathcal{S}$ is included in the square $(-|\mathcal{S}|^\beta,|\mathcal{S}|^\beta)^2$ with high probability. Remember the definition of the transition matrices $K^\alpha$ in Definition~\ref{d.kernels}: each point of $\mathcal{S}$ has roughly probability $t/|\mathcal{S}|^{\alpha\beta}$ to ``jump" a distance greater than $3|\mathcal{S}|^\beta$. So, if $\alpha \ll 1/\beta$ then with high probability there exists a particle which has jumped a distance greater than $3|\mathcal{S}|^\beta$, and we have what we want: $\pi_t(\mathcal{S})$ is not included in $(-|\mathcal{S}|^\beta,|\mathcal{S}|^\beta)^2$, hence it is very different from a typical spectral sample  (in particular, if $|\mathcal{S}|^\beta$ is larger than $R+2$, then with high probability there even exists a particle which has jumped outside the domain of $\widehat{\Pro}_{f_R}$). We see from this heuristics why our bounds are worse and worse as the exponent $\alpha$ increases.

In order to derive our main result (Theorem \ref{tropbien}) we need a quantitative (and rigorous) version of this heuristics. This is the purpose of Section~\ref{sectionexcepcriticality}.


\section{Warm-up without spectral analysis: proofs of Propositions~\ref{outsidepc} and~\ref{dgeq11}}\label{sectionabsence}

In this section, we prove Propositions~\ref{outsidepc} and~\ref{dgeq11}. As mentioned in Subsection~\ref{sectionexclu}, the general ideas are the same as for the analogous results of \cite{olle1997dynamical}. However, there is a slightly new difficulty due to the lack of independence and that is the reason why we need the following two lemmas.

\begin{lem}\label{increas}
Let $K$ be a symmetric transition matrix on the edges of a graph $G=(V,E)$. Consider a law $\mu$ on the set of bond percolation configurations $\Omega=\lbrace -1,1 \rbrace^E$ that satisfies the following: there exists $p_0 \in [0,1]$ such that, for any $n \in \N$, any $e_1, \cdots , e_{n+1}$ distinct edges and any $i_1, \cdots , i_n \in \lbrace -1,1 \rbrace$, we have:
\[
\mu \left[ \omega(e_1)=i_1, \, \cdots , \, \omega(e_n)=i_n \right] > 0
\]
and:
\[
\mu \left[ \omega(e_{n+1})=1 \cond \omega(e_1)=i_1, \, \cdots , \, \omega(e_n)=i_n \right] \leq p_0 \, .
\]
Then, 
for any increasing event $A \in \mathcal{F}$ (i.e. an event such that, if $\omega \leq \omega'$ and $\omega \in A$, then $\omega' \in A$)
that depends on only finitely many edges, we have $\mu[A] \leq \Pro_{p_0}[A]$. 
\end{lem}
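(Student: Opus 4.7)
The plan is to construct an explicit coupling of $\mu$ with the product measure $\mathbb{P}_{p_0}$ in which the $\mu$-configuration is pointwise dominated by the $\mathbb{P}_{p_0}$-configuration on the finitely many edges that matter for $A$. Once such a coupling exists, the monotonicity of $A$ yields $\mathds{1}_A(\omega_\mu) \leq \mathds{1}_A(\omega_{p_0})$ pointwise, and taking expectations gives the claim.

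Concretely, let $e_1,\dots,e_N$ be an enumeration of the edges on which $A$ depends, and draw i.i.d. uniforms $U_1,\dots,U_N$ on $[0,1]$. I would build the two configurations inductively: at step $i$, having already defined $\omega_\mu(e_1),\dots,\omega_\mu(e_{i-1})$, set
\[
q_i := \mu\bigl[\,\omega(e_i)=1 \,\big|\, \omega(e_1)=\omega_\mu(e_1),\dots,\omega(e_{i-1})=\omega_\mu(e_{i-1})\,\bigr],
\]
which is well-defined by the positivity hypothesis, and put $\omega_\mu(e_i):=1$ iff $U_i \leq q_i$, while $\omega_{p_0}(e_i):=1$ iff $U_i \leq p_0$. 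By construction $q_i \leq p_0$, so $\omega_\mu(e_i) \leq \omega_{p_0}(e_i)$ deterministically for every $i$.

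It remains to check that the marginals are correct. By the inductive construction and the tower property, the joint law of $\bigl(\omega_\mu(e_1),\dots,\omega_\mu(e_N)\bigr)$ is exactly the pushforward of $\mu$ under the projection onto $\{e_1,\dots,e_N\}$; and since the $U_i$ are i.i.d.\ uniform, $\bigl(\omega_{p_0}(e_1),\dots,\omega_{p_0}(e_N)\bigr)$ is an i.i.d.\ $\mathrm{Ber}(p_0)$ vector, i.e.\ the pushforward of $\mathbb{P}_{p_0}$. Since $A$ depends only on these $N$ edges and is increasing, $\mathds{1}_A(\omega_\mu) \leq \mathds{1}_A(\omega_{p_0})$, and taking expectations gives $\mu[A] \leq \mathbb{P}_{p_0}[A]$.

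The only mild subtlety, rather than a genuine obstacle, is making sure the conditional probabilities $q_i$ are legitimate numbers (not a $0/0$ ratio); this is exactly what the positivity assumption $\mu[\omega(e_1)=i_1,\dots,\omega(e_n)=i_n]>0$ is there to guarantee, which is why it was included in the hypotheses alongside the upper bound on the conditional one-edge probabilities.
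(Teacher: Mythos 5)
Your proof is correct. The paper itself omits the argument, remarking only that ``the proof of this lemma is straightforward,'' and the sequential monotone coupling you describe (drawing i.i.d.\ uniforms, deciding each edge by comparing to the $\mu$-conditional probability and to $p_0$, and using the hypothesis to guarantee the former never exceeds the latter) is exactly the canonical way to establish this stochastic domination; your observation that the positivity assumption is there precisely to make the conditional probabilities $q_i$ well-defined is also the right reading of the hypotheses.
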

The proof of this lemma is straightforward. It is applied in our context as follows: since our graphs are locally finite, the lemma holds with:
\[
A = A_n(v) := \lbrace \exists \text{ a self avoiding path starting at } v \text{, of length } n \text{, and made of open edges} \rbrace \, .
\]
As a result, it is also true with $A = \lbrace v \leftrightarrow \infty \rbrace$ since we have $\lbrace v \leftrightarrow \infty \rbrace = \bigcap_{n \in \N} \downarrow A_n(v)$. We deduce that, if $p_0 < p_c$, then for all $v$, $\mu$-a.s. $v$ is not in an infinite cluster. Therefore, $\mu \left[ \exists \text{ an infinite cluster} \right] = 0$ (remember that the vertex set is countable).

\begin{lem}\label{increaslem}
Let $p \in (0,1)$ and let $\left( \omega_K(t) \right)_{t \geq 0}$ be a $K$-exclusion dynamical percolation of parameter $p$. Write $\omega_K^{(\epsilon)}$ for the configuration that equals $\omega_K(0)$ except that we set $\omega_K(0)_e = 1$ for every edge $e$ such that a clock associated to $e$ has rung between time $0$ and time $\epsilon$. If $e_1, \cdots , e_{n+1}$ are distinct edges and $i_1, \cdots , i_n \in \lbrace -1,1 \rbrace$, then:
\[
\Pro \left[ \omega^{(\epsilon)}_K(e_{n+1})=1 \cond \omega^{(\epsilon)}_K(e_1)=i_1, \, \cdots , \, \omega^{(\epsilon)}_K(e_n)=i_n \right] \leq p + \frac{1-p}{p} \epsilon \, .
\]
\end{lem}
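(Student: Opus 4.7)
The plan is to decompose $\omega_K^{(\eps)}(e) = X_e \vee C_e$, where $X_e := \omega_K(0)_e$ is the initial Bernoulli$(p)$ value and $C_e$ is the indicator that at least one clock incident to $e$ has rung during $[0,\eps]$. Since the $X_e$'s are i.i.d. and independent of the Poisson clocks, the two sources of randomness separate cleanly. Writing $B_j := \{\omega_K^{(\eps)}(e_j) = i_j\}$, the first observation is that $X_{e_{n+1}}$ is independent of $(C_{e_{n+1}}, B_1, \ldots, B_n)$, so conditioning on it gives
\[
\Pro \left[ \omega_K^{(\eps)}(e_{n+1}) = 1 \cond B_1, \ldots, B_n \right] = p + (1-p)\,\Pro \left[ C_{e_{n+1}} = 1 \cond B_1, \ldots, B_n \right].
\]
It therefore suffices to show that $\Pro \left[ C_{e_{n+1}} = 1 \cond B_1, \ldots, B_n \right] \leq \eps/p$.

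For this I will apply a union bound over the clocks incident to $e_{n+1}$: let $R_{\{e,f\}}$ be the indicator that clock $\{e,f\}$ rang (so $\Pro[R_{\{e,f\}} = 1] \leq K(e,f)\eps$ and the $R_{\{e,f\}}$'s are mutually independent), and split
\[
\Pro \left[ C_{e_{n+1}} = 1, B_1, \ldots, B_n \right] \leq \sum_{f \neq e_{n+1}} \Pro \left[ R_{\{e_{n+1},f\}} = 1, B_1, \ldots, B_n \right]
\]
into three cases. When $f \notin \{e_1, \ldots, e_n, e_{n+1}\}$, the clock $R_{\{e_{n+1},f\}}$ is independent of every variable appearing in the $B_k$'s, so the term factors as $\leq K(e_{n+1}, f)\,\eps \cdot \Pro[B_1, \ldots, B_n]$. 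When $f = e_j$ with $i_j = -1$, the ringing forces $C_{e_j} = 1$, which contradicts $B_j$, so the contribution is $0$. When $f = e_j$ with $i_j = 1$, the ringing forces $C_{e_j} = 1$ and hence $B_j$ is automatic; using independence of $R_{\{e_{n+1},e_j\}}$ from $(X_k, C_{e_k})_{k \neq j}$, the contribution is $\leq K(e_{n+1}, e_j)\,\eps \cdot \Pro[B_k,\, k \neq j]$.

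The hard part is this last case: I need to control $\Pro[B_k,\, k \neq j]$ in terms of $\Pro[B_1, \ldots, B_n]$, and the bound $\Pro[B_k,\, k \neq j] \leq \Pro[B_1, \ldots, B_n]/p$ is exactly what produces the factor $1/p$ in the final answer. To establish it, I condition on the $\sigma$-algebra $\mathcal{C}$ of all clocks: given $\mathcal{C}$, the $C_{e_k}$'s are determined, the $X_k$'s remain i.i.d., and so the $B_k$'s become conditionally independent. For $i_j = 1$ one has $\Pro[B_j \cond \mathcal{C}] = p$ if $C_{e_j} = 0$ and $= 1$ if $C_{e_j} = 1$, hence $\Pro[B_j \cond \mathcal{C}] \geq p$ almost surely; averaging gives $\Pro[B_j \cond B_k,\, k \neq j] \geq p$, which is the required bound.

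Summing the three contributions and using the row-sum identity $\sum_{f \neq e_{n+1}} K(e_{n+1}, f) \leq 1$ then yields $\Pro[C_{e_{n+1}} = 1, B_1, \ldots, B_n] \leq (\eps/p)\,\Pro[B_1, \ldots, B_n]$, and combining with the reduction from the first paragraph gives the claimed inequality $p + (1-p)\eps/p$. The only genuinely non-trivial point is the $1/p$ factor: when $i_j = 1$, conditioning on $B_j$ can overweight the clocks involving $e_{n+1}$ by a factor of $1/p$, because the clock $\{e_j, e_{n+1}\}$ ringing provides a ``free'' way to satisfy $B_j$ that bypasses the $X_j = 1$ baseline of probability $p$. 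Everything else is direct union-bound bookkeeping combined with the independence of the Poisson clocks from the initial percolation configuration.
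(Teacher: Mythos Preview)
Your proof is correct and follows essentially the same route as the paper's: both separate out the contribution of $X_{e_{n+1}}=\omega_K(0)_{e_{n+1}}$, apply a union bound over the clocks incident to $e_{n+1}$, split into the three cases $f\notin\{e_1,\dots,e_n\}$, $f=e_j$ with $i_j=-1$, and $f=e_j$ with $i_j=1$, and reduce everything to the key inequality $\Pro[B_k,\,k\neq j]\le \Pro[B_1,\dots,B_n]/p$. The only cosmetic difference is in how that last inequality is obtained: the paper argues directly via the inclusion $\{X_{e_j}=1\}\cap\bigcap_{k\neq j}B_k\subseteq \bigcap_k B_k$ together with the independence of $X_{e_j}$ from $(B_k)_{k\neq j}$, whereas you condition on the clock $\sigma$-algebra and use conditional independence of the $B_k$'s; both arguments are short and equivalent.
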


\begin{proof}
Define the event $C_{e,f}^{(\epsilon)}$ as follows:
\[
C_{e,f}^{(\epsilon)} = \lbrace \text{the clock of }  \lbrace e,f \rbrace \text{ has rung between time } 0 \text{ and time } \epsilon \rbrace \, .
\]
Consider $e_1, \, \cdots , \, e_{n+1}$ and $i_1, \, \cdots , \, i_n$ as in the statement of the lemma. Note that $\omega_K(0)_{e_{n+1}}$ is independent of $\left\lbrace \omega^{(\epsilon)}_K(e_1)=i_1, \cdots, \omega^{(\epsilon)}_K(e_n)=i_n \right\rbrace$ and of this event intersected with $\lbrace \exists f \in E, \, C^{(\epsilon)}_{e_{n+1},f} \rbrace$. So, if we distinguish between the two cases $\omega(0)_{e_{n+1}}=1$ and $\omega(0)_{e_{n+1}}=-1$, we obtain:
\begin{align*}
& \Pro \left[ \omega^{(\epsilon)}_K(e_{n+1}) = 1 \cond \omega^{(\epsilon)}_K(e_1)=i_1, \, \cdots , \, \omega^{(\epsilon)}_K(e_n)=i_n \right]\\
& = p + (1-p) \, \Pro \left[ \exists f \in E, \; C^{(\epsilon)}_{e_{n+1},f} \cond \omega^{(\epsilon)}_K(e_1)=i_1, \, \cdots , \, \omega^{(\epsilon)}_K(e_n)=i_n \right]\\
& \leq p + (1-p) \sum_{f \in E} \Pro \left[ C_{e_{n+1},f}^{(\epsilon)} \cond \omega^{(\epsilon)}_K(e_1)=i_1, \, \cdots , \, \omega^{(\epsilon)}_K(e_n)=i_n \right] \, .
\end{align*}
If $f \notin \lbrace e_1, \cdots , e_n \rbrace$, then $C_{e_{n+1},f}^{(\epsilon)}$ is independent of $\left\lbrace \omega^{(\epsilon)}_K(e_1)=i_1, \, \cdots , \, \omega^{(\epsilon)}_K(e_n)=i_n \right\rbrace$. Moreover, if $f=e_j$ and $C_{e_{n+1},f}^{(\epsilon)}$ holds, then $\omega^{(\epsilon)}_K(e_j)=1$. Therefore, the above equals:

\begin{align}\label{sumovereedges}
& p + (1-p) \Bigg( \sum_{f \notin \lbrace e_1, \cdots , e_n \rbrace} \Pro \left[ C_{e_{n+1},f}^{(\epsilon)} \right] \nonumber \\
& + \sum_{j \in \lbrace 1, \cdots , n \rbrace: \atop i_j = 1} \frac{\Pro \left[ C_{e_{n+1},e_j}^{(\epsilon)}, \forall k \in \lbrace 1, \, \cdots, \, n \rbrace \setminus \lbrace j \rbrace, \, \omega^{(\epsilon)}_K(e_k)=i_k \right]}{\Pro \left[ \omega^{(\epsilon)}_K(e_1)=i_1, \cdots , \omega^{(\epsilon)}_K(e_n)=i_n \right]} \Bigg) \nonumber \\
& = p + (1-p) \Bigg( \sum_{f \notin \lbrace e_1, \cdots , e_n \rbrace} \Big( 1-\exp \big( -\epsilon K(e_{n+1},f) \big) \Big) \nonumber \\
& + \sum_{j \in \lbrace 1, \cdots, n \rbrace: \atop i_j = 1} \Big( 1-\exp \big( -\epsilon K(e_{n+1},f) \big) \Big) \frac{\Pro \left[ \forall k \in \lbrace 1, \, \cdots, \, n \rbrace \setminus \lbrace j \rbrace, \, \omega^{(\epsilon)}_K(e_k)=i_k \right]}{\Pro \left[ \omega^{(\epsilon)}_K(e_1)=i_1, \cdots , \omega^{(\epsilon)}_K(e_n)=i_n \right]} \Bigg) \, .
\end{align}
Using that $\omega_K(0)_{e_j}$ is independent of $\left\lbrace \forall k \in \lbrace 1, \cdots, n \rbrace \setminus \lbrace j \rbrace, \, \omega^{(\epsilon)}_K(e_k)=i_k \right\rbrace$, we obtain that, for all $j$ such that $i_j = 1$:
\begin{align*}
& \Pro \left[ \omega^{(\epsilon)}_K(e_1)=i_1, \, \cdots , \, \omega^{(\epsilon)}_K(e_n)=i_n \right]\\
& \geq \Pro \left[ \omega_K(0)_{e_j} = 1, \forall k \in \lbrace 1, \cdots, n \rbrace \setminus \lbrace j \rbrace, \, \omega^{(\epsilon)}_K(e_k)=i_k \right]\\
& = p \, \Pro \left[ \forall k \in \lbrace 1, \cdots, n \rbrace \setminus \lbrace j \rbrace, \, \omega^{(\epsilon)}_K(e_k)=i_k \right].
\end{align*}
Therefore, \eqref{sumovereedges} is smaller than or equal to:
\begin{align*}
& p + (1-p) \sum_{f \notin \lbrace e_1, \cdots , e_n \rbrace} \Big( 1-\exp \big( -\epsilon K(e_{n+1},f) \big) \Big) + (1-p) \sum_{j \in \lbrace 1, \cdots, n \rbrace \atop i_j = 1} \frac{1-\exp \left( -\epsilon K(e_{n+1},e_j) \right)}{p}\\
& \leq p + (1-p) \sum_{f \in E} \frac{1-\exp \left( -\epsilon K(e_{n+1},e_j) \right)}{p}  \leq p + \frac{1-p}{p} \, \sum_{f \in E} \epsilon \, K(e_{n+1},f) = p + \frac{1-p}{p}\epsilon
\end{align*}
(since $K$ is a symmetric transition matrix).
\end{proof}

\begin{proof}[Proof of Proposition~\ref{outsidepc}] We follow the ideas of~\cite{olle1997dynamical}, proof of Proposition~$1.1$. Let $p<p_c$. Note that: \textit{(a)} For any edge $e$, if there exists some time $t$ in $[0,\epsilon]$ such that $e$ is open at time $t$, then $e$ is open in $\omega_K^{(\epsilon)}$. \textit{(b)} The event $\lbrace \exists \text{ an infinite cluster} \rbrace$ is increasing. Therefore, if there exists an exceptional time between time $0$ and time $\epsilon$, then there is an infinite cluster in $\omega_K^{(\epsilon)}$. Furthermore, Lemma~\ref{increaslem} implies that if $\epsilon$ is sufficiently small, then there exists $p_0 < p_c$ such that the distribution of $\omega_K^{(\epsilon)}$ satisfies the hypotheses of Lemma~\ref{increas}. We deduce that, if $\epsilon$ is sufficiently small, then a.s. there is no exceptional time between times $0$ and $\epsilon$, which easily implies the result.

If $p > p_c$, then the same proof works with results analogous to Lemma~\ref{increas} and Lemma~\ref{increaslem} (with opposite inequalities).
\end{proof}
 
\begin{proof}[Proof of Proposition~\ref{dgeq11}] We follow the ideas of~\cite{olle1997dynamical}, proof of Theorem~$1.3$. Consider a graph $G$, a symmetric transition matrix $K$ on the edges of $G$ and a parameter $p \in (0,1)$ such that $\Pro_p \left[ \exists \text{ an infinite cluster } \right] = 0$. Let $\left( \omega_K(t) \right)_{t \geq 0}$ be a $K$-exclusion dynamical percolation of parameter~$p$. Let $v$ be a vertex of $G$ and write $N(v) \in \N \cup \lbrace + \infty \rbrace$ for the number of times $t \in [0,1]$ such that $\lbrace v \overset{\omega_K(t)}{\longleftrightarrow} \infty \rbrace$ holds. As explained in~\cite{olle1997dynamical} in the case of i.i.d. dynamical percolation, we can show that either $\E \left[ N(v) \right] = 0$ or $\E \left[ N(v) \right] = +\infty$. This is actually a consequence of general results about reversible Markov processes - see Lemma~$2.3$ of~\cite{peres1998number} - and these results are also true for our $K$-exclusion processes. (For more explanations and further references, see~\cite{peres1998number}; note that the fact that we consider symmetric matrices is important here since it implies that our exclusion processes are reversible.)

Now, take $d \geq 11$, let $K$ be a symmetric transition matrix on the edges of the Euclidean lattice $\Z^d$, and let $\left( \omega_K(t) \right)_{t \geq 0}$ be a $K$-exclusion dynamical percolation of parameter $p_c=p_c(d)$ (if $p \neq p_c$ then the result is a direct consequence of Proposition~\ref{outsidepc}). The above observation implies that, in order to prove Proposition~\ref{dgeq11}, it is sufficient to show that, for every $v \in \Z^d$, $\E \left[ N(v) \right] < +\infty$. That is the purpose of what follows.

It is known (see~\cite{hara1994mean} for $d \geq 19$ and the recent work~\cite{fitzner2015nearest} for the extension to $d \geq 11$) that there exists $C = C(d) < +\infty$ such that, for all $p \geq p_c=p_c(d)$:
\begin{equation}\label{betais1}
\Pro_p \left[ v \leftrightarrow \infty \right] \leq C \, (p-p_c) \, .
\end{equation}
For each $m \in \N_+$, write $N_m(v)$ for the number of intervals of the form $I_k^m=[k/m,(k+1)/m]$, $k \in \lbrace 0, \cdots, m-1 \rbrace$, such that there exists $t \in I_k^m$ for which $\lbrace v \overset{\omega_K(t)}{\longleftrightarrow} \infty \rbrace$ holds. Lemmas~\ref{increas} and~\ref{increaslem} (with $\epsilon = 1/m$) imply that:
\[
\Pro \left[ \exists t \in I_0^m, \,  v \overset{\omega_K(t)}{\longleftrightarrow} \infty \right] \leq \Pro_{p_c+C'/m} \left[ v \leftrightarrow \infty \right] \,  ,
\]
where $C'=(1-p_c)/p_c$. Using~\eqref{betais1} we obtain:
\[
\Pro \left[ \exists t \in I_0^m, \, v \overset{\omega_K(t)}{\longleftrightarrow} \infty \right] \leq CC'/m \, .
\]
Since our process is time-stationary, the above is also true for any $I_k^m$. Therefore, $\E \left[ N_m(v) \right] \leq m \, CC'/m = CC' < +\infty$ and by Fatou's lemma we are done since $N(v) \leq \underset{m \rightarrow +\infty}{\text{lim inf }} N_m(v)$.
\end{proof}

Kesten and Zhang~\cite{kesten1987strict} have proved that~\eqref{betais1} is not true when $d=2$. For site percolation on $\T$, it has even been proved in~\cite{smirnov2001critical} (see also~\cite{werner2007lectures}) that:
\begin{equation}\label{theta(p)and5/36}
\Pro_p \left[ v \leftrightarrow \infty \right] = (p-1/2)^{5/36+\petito{1}} \, ,
\end{equation}
where $\petito{1}$ goes to $0$ as $p \searrow 1/2$. If we follow the proof of Proposition~\ref{dgeq11} with~\eqref{theta(p)and5/36} instead of~\eqref{betais1}, we obtain the following: For any symmetric transition matrix $K$ on the sites of $\T$, any site $v \in \T$, and any $\delta >0$, there exists a constant $C=C(\delta) < + \infty$ such that:
\begin{equation}\label{hausupper}
\E \left[ N_m(v) \right] \leq C \, m.m^{\delta-5/36} = C \, m^{\delta+31/36} \, ,
\end{equation}
for the $K$-exclusion dynamical percolation of parameter $p_c=1/2$. The analogue of~\eqref{hausupper} for i.i.d. dynamical percolation is shown in~\cite{SS10} in the proof of their Theorem~$6.3$. Moreover, this is the only property that they use to prove that a.s. the Hausdorff dimension of the set of exceptional times is at most $31/36$. Therefore, we have the following result (and we refer to~\cite{SS10} for more details):

\begin{prop}\label{upperhausdorff}
Let $K$ be any symmetric transition matrix on the sites of $\T$ and consider a $K$-exclusion dynamical percolation of parameter $p=p_c=1/2$. Then, a.s. the Hausdorff dimension of the set of exceptional times is at most $31/36$.
\end{prop}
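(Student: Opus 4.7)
The plan is to reduce the statement to a direct application of the first-moment method for Hausdorff dimension, with the key input being the estimate~\eqref{hausupper} that was just derived. Fix a vertex $v_0 \in \T$ and write
\[
\calE(v_0) := \bigl\{ t \in [0,1] \, : \, v_0 \overset{\omega_K(t)}{\longleftrightarrow} \infty \bigr\}.
\]
By countable additivity of the event ``some vertex is in an infinite cluster'', the set $\calE$ of exceptional times in $[0,1]$ satisfies $\calE = \bigcup_{v \in \T} \calE(v)$ (up to a null set). Since Hausdorff dimension is stable under countable unions and under shifting by integer multiples of any fixed time horizon, it will suffice to prove that a.s. $\dim_{\calH}(\calE(v_0)) \leq 31/36$, and by translation invariance of the dynamics one may take $v_0$ arbitrary.

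For any $m \geq 1$, the set $\calE(v_0)$ is covered by those intervals $I_k^m = [k/m,(k+1)/m]$ with $k \in \{0,\dots,m-1\}$ for which the event $\{\exists\, t \in I_k^m \,:\, v_0 \overset{\omega_K(t)}{\longleftrightarrow} \infty\}$ holds. The number of such intervals is exactly $N_m(v_0)$. For any $s \in (31/36,1)$, choose $\delta > 0$ with $31/36 + \delta < s$. Then the $s$-Hausdorff pre-measure of $\calE(v_0)$ at mesh $1/m$ is bounded above by
\[
\calH^s_{1/m}(\calE(v_0)) \leq N_m(v_0) \cdot \left( \frac{1}{m}\right)^s.
\]
Taking expectations and invoking~\eqref{hausupper} yields
\[
\E\bigl[\calH^s_{1/m}(\calE(v_0))\bigr] \leq C \cdot m^{31/36 + \delta - s} \xrightarrow[m \to \infty]{} 0.
\]

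By Fatou's lemma, $\E\bigl[\liminf_m \calH^s_{1/m}(\calE(v_0))\bigr] = 0$, so a.s. along a (random) subsequence the $s$-Hausdorff pre-measure at arbitrarily small mesh tends to $0$, which forces $\calH^s(\calE(v_0)) = 0$ and hence $\dim_{\calH}(\calE(v_0)) \leq s$ almost surely. Intersecting over a countable sequence $s_n \downarrow 31/36$ gives $\dim_{\calH}(\calE(v_0)) \leq 31/36$ a.s., which combined with the countable union above yields the same bound for $\calE$. I do not anticipate any genuine obstacle: all non-trivial content has already been absorbed into the derivation of~\eqref{hausupper} via Lemmas~\ref{increas} and~\ref{increaslem} together with the Smirnov--Werner computation~\eqref{theta(p)and5/36}; the remaining argument is the standard first-moment covering scheme, exactly as in~\cite{SS10}.
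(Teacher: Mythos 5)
Your argument is correct and is exactly the first-moment covering argument from Schramm--Steif that the paper invokes, with~\eqref{hausupper} as the only non-trivial input. One small slip: you appeal to ``translation invariance of the dynamics'' to reduce to a single vertex $v_0$, but the kernel $K$ in the statement is an arbitrary symmetric transition matrix and need not be translation invariant; this is harmless because~\eqref{hausupper} holds for every vertex $v$ (with a constant depending only on $\delta$), so the countable union $\calE = \bigcup_v \calE(v)$ together with the countable stability of Hausdorff dimension already finishes the proof without any invariance.
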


\section{Exceptional times at criticality: proofs of Theorem~\ref{tropbien} and Proposition~\ref{tropbienlog}}\label{sectionexcepcriticality}

We need the following lemma (which is a quantitative version of Lemma~$7.1$ in \cite{BGS}):

\begin{lem}\label{singular}
Let $(E,\nu)$ be a countable set endowed with a sub-probability measure $\nu \neq 0$. Furthermore, let $P$ be a symmetric sub-transition matrix on $E$. Let $F \subseteq E$ and define $\delta, \, \eta$ as follows: $\nu(F) = (1 - \delta)\nu(E)$; $\eta = \max_{x \in F} P(x,F)$. We have:
\[
\sum_{x,y \in E} \sqrt{\nu(x))}\sqrt{\nu(y)} P(x,y) \leq \nu(E)(\eta + 2\sqrt{\delta}) \, .
\]
\end{lem}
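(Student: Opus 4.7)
The plan is to view the left-hand side as the quadratic form $\langle\sqrt{\nu},P\sqrt{\nu}\rangle$ on $\ell^2(E)$ and to split $\sqrt{\nu}=v+w$, where $v:=\sqrt{\nu}\,\un_F$ and $w:=\sqrt{\nu}\,\un_{E\setminus F}$, so that $\|v\|_2^2=\nu(F)\leq\nu(E)$ and $\|w\|_2^2=\nu(E\setminus F)=\delta\,\nu(E)$. The two hypotheses then play complementary roles: the localization bound $P(x,F)\leq\eta$ for $x\in F$ is only useful for the ``$FF$'' block $\langle v,Pv\rangle$, whereas the mass estimate $\delta\,\nu(E)=\nu(E\setminus F)$ is the right tool for the three remaining blocks.

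First I would bound the diagonal block by Cauchy--Schwarz in the form
\[\sum_{x,y\in F}\sqrt{\nu(x)\nu(y)}\,P(x,y)\;\leq\;\sqrt{\sum_{x,y\in F}\nu(x)\,P(x,y)}\;\sqrt{\sum_{x,y\in F}\nu(y)\,P(x,y)}.\]
The two square roots are equal by symmetry of $P$, and each reduces to $\sum_{x\in F}\nu(x)\,P(x,F)\leq\eta\,\nu(F)\leq\eta\,\nu(E)$, which accounts for the $\eta\,\nu(E)$ contribution in the final bound.

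For the remaining three blocks, the key move is not to estimate them one by one but to regroup them as
\[\langle v,Pw\rangle+\langle w,Pv\rangle+\langle w,Pw\rangle\;=\;\langle\sqrt{\nu},Pw\rangle+\langle w,Pv\rangle.\]
The operator $P$ has norm at most $1$ on $\ell^2(E)$ by Schur's test, since it is symmetric and sub-stochastic, so both row and column sums of $|P|$ are bounded by $1$. Hence Cauchy--Schwarz gives each of the two scalar products above at most $\|\sqrt{\nu}\|_2\cdot\|w\|_2=\sqrt{\nu(E)}\cdot\sqrt{\delta\,\nu(E)}=\sqrt{\delta}\,\nu(E)$, contributing a total of $2\sqrt{\delta}\,\nu(E)$ and yielding the claimed bound.

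I do not expect any genuine obstacle: once the Hilbert-space reformulation is adopted everything is routine. The only spot where carelessness would spoil the constants is precisely this last regrouping --- bounding $\langle w,Pw\rangle$ on its own would yield an extra $\delta\,\nu(E)$ term, which since $\delta\leq\sqrt{\delta}$ still has the correct shape but produces a factor $3\sqrt{\delta}$ rather than the sharper $2\sqrt{\delta}$ stated in the lemma.
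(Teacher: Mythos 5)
Your proof is correct and follows the same broad strategy as the paper's: isolate the $F\times F$ block and bound it by $\eta\,\nu(E)$, then control the rest by $2\sqrt{\delta}\,\nu(E)$ via the small mass of $E\setminus F$ and the fact that $P$ contracts. The packaging differs in two places. For the non-$FF$ blocks you pass to the $\ell^2$ operator bound $\|P\|_{\mathrm{op}}\leq 1$ (Schur's test) together with the exact regrouping $\langle\sqrt{\nu},Pw\rangle+\langle w,Pv\rangle$; the paper instead over-estimates the full sum by $A_1+2A_2$ with $A_2=\sum_{x\in E,\,y\in E\setminus F}\sqrt{\nu(x)\nu(y)}\,P(x,y)$ (double-counting the $(E\setminus F)\times(E\setminus F)$ block, harmlessly since all terms are nonnegative) and then bounds $A_2\leq\sqrt{\delta}\,\nu(E)$ by two hands-on applications of Cauchy--Schwarz, one in $x$ against the sub-probability $P(\cdot,y)$ and one in $y$ against counting measure. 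These are the same estimates in substance, since Schur's test is itself a packaged double Cauchy--Schwarz, and both groupings deliver the sharp factor $2\sqrt{\delta}$ rather than $3\sqrt{\delta}$. One point in your favour: your treatment of the $F\times F$ block, namely
\[
\sum_{x,y\in F}\sqrt{\nu(x)\nu(y)}\,P(x,y)\leq\sum_{x\in F}\nu(x)P(x,F)\leq\eta\,\nu(F)\leq\eta\,\nu(E),
\]
is the correct argument. The paper's printed justification of $A_1\leq\eta\,\nu(E)$ first bounds $P(x,y)\leq\eta$ pointwise and then asserts $\bigl(\sum_{x\in E}\sqrt{\nu(x)}\bigr)^2\leq\nu(E)$ ``by Cauchy--Schwarz,'' which is false in general (for $\nu$ uniform on two points the left side is $2$ and the right side is $1$); your Cauchy--Schwarz, applied against $P$ rather than against the constant, is what actually closes that step.
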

\begin{proof}
Since $P$ is symmetric, we have the following inequality:
\begin{multline*}
\sum_{x,y \in E} \sqrt{\nu(x)} \sqrt{\nu(y)} P(x,y) \leq \sum_{x \in F, y \in F} \sqrt{\nu(x)} \sqrt{\nu(y)} P(x,y)\\
+ 2 \sum_{x \in E, y \in E \setminus F} \sqrt{\nu(x)} \sqrt{\nu(y)} P(x,y) \, .
\end{multline*}
Write $A_1$ and $A_2$ for the two sums of the right-hand side of the above inequality. Let us show that $A_1 \leq \nu(E) \, \eta$ and $A_2 \leq \nu(E) \, \sqrt{\delta}$. We have (by using the Cauchy-Schwarz inequality for the measure $(x,y) \mapsto P(x,y)$):
\begin{eqnarray*}
A_1 & \leq & \sqrt{\sum_{x \in F, y \in F} \nu(x) P(x,y)} \sqrt{\sum_{x \in F, y \in F} \nu(y) P(x,y) } \\
& = & \sqrt{\sum_{x \in F} \nu(x) P(x,F)} \sqrt{\sum_{y \in F} \nu(y) P(y,F) } \text{ (since } P \text{ is symmetric)}\\
& \leq & \nu(F) \eta \leq \nu(E) \eta \, .
\end{eqnarray*}
Moreover, by using the Cauchy-Schwarz inequality twice (first for the sub-probability measures $P(.,y)$ then for the counting measure), we obtain:
\begin{eqnarray*}
A_2 & = & \sum_{y \in E \setminus F} \sqrt{\nu(y)} \; \sum_{x \in E} \sqrt{\nu(x)} P(x,y)\\
& \leq & \sum_{y \in E \setminus F} \sqrt{\nu(y)} \; \sqrt{\sum_{x \in E} \nu(x) P(x,y)} \\
& \leq & \sqrt{ \sum_{y \in E \setminus F} \nu(y) } \; \sqrt{ \sum_{y \in E \setminus F} \Big( \sum_{x \in E} \nu(x) P(x,y) \Big) }\\
& = & \sqrt{ \nu(E) \, \delta } \; \sqrt{ \sum_{x \in E} \nu(x) \Big( \sum_{y \in E \setminus F} P(x,y) \Big) } \, .
\end{eqnarray*}
We are done since $\sum_{x \in E} \nu(x) \Big( \sum_{y \in E \setminus F} P(x,y) \Big) \leq \sum_{x \in E} \nu(x) = \nu(E)$.
\end{proof}

Now, we use Lemma~\ref{singular}, Theorem~\ref{GPScool}, and Corollary~\ref{soimportantcor} in order to prove Theorem~\ref{tropbien}. Remember~\eqref{secondsecondmoment} and~\eqref{secondsecondmomenthausdorff}: we need to study the following quantities: 
\[
\sum_{S, S' \subseteq \mathcal{I}_R} \sqrt{\widehat{\Pro}_{f_R} \left[ \lbrace S \rbrace \right]}  \sqrt{\widehat{\Pro}_{f_R} \left[ \lbrace S' \rbrace \right]} \, K_t^\alpha(S,S') \, .
\]

\begin{proof}[Proof of Theorem~\ref{tropbien}]
Let $\beta > 1$ and $\alpha > 0$. For any $k \in \N$, let $E = E_k = E_k(R) = \left\lbrace S \subseteq \mathcal{I}_R \, : \, |S| \in [2^k,2^{k+1}-1] \right\rbrace$ and:
\begin{align*}
F = F_k = F_k(\beta,R) & = \left\lbrace S \in E_k \, : \, S \subseteq (-2^{k\beta},2^{k\beta})^2 \right\rbrace \\
& = \left\lbrace  S \subseteq \mathcal{I}_R \, : \, |S| \in [2^k,2^{k+1}-1]  \text{ and } S \subseteq (-2^{k\beta},2^{k\beta})^2 \right\rbrace.
\end{align*}
Let $\nu = \nu_k = \nu_k(R)$ be $\widehat{\Pro}_{f_R}$ restricted to $E_k$. Also, let $P = P_k^\alpha(t) = P_k^\alpha(t,R)$ be $K^\alpha_t$ restricted to $E_k$. Finally, let $\delta = \delta_k=\delta_k(\beta,R)$ and $\eta = \eta_k^\alpha(t)=\eta_k^\alpha(\beta,R,t)$ be as in Lemma~\ref{singular} (if $\nu_k=0$ for some $k$ then we let $\delta_k=\eta_k^\alpha(t)=0$).

Remember that, if $|S| \neq |S'|$, then $K_t^\alpha(S,S') = 0$. Thus:
\begin{multline*}
\sum_{S, S' \subseteq \mathcal{I}_R} \sqrt{\widehat{\Pro}_{f_R} \left[ \lbrace S \rbrace \right]}  \sqrt{\widehat{\Pro}_{f_R} \left[ \lbrace S' \rbrace \right]} \, K_t^\alpha(S,S')\\
= \alpha_1(R) + \sum_{k \in \N} \sum_{S, S' \in E_k} \sqrt{\nu_k(S)} \sqrt{\nu_k(S')} \, P_k^\alpha(t)(S,S') \, .
\end{multline*}
(The term $\alpha_1(R)$ is just the contribution of $S=S'=\emptyset$.) By applying Lemma~\ref{singular} for each $k$, we obtain:

\begin{equation}\label{whathelpstoprove}
\sum_{S, S' \subseteq \mathcal{I}_R} \sqrt{\widehat{\Pro}_{f_R} \left[ \lbrace S \rbrace \right]}  \sqrt{\widehat{\Pro}_{f_R} \left[ \lbrace S' \rbrace \right]} \, K_t^\alpha(S,S')
\leq  \alpha_1(R) + \sum_{k \in \N} \widehat{\Pro}_{f_R} \left[ E_k \right] \left( \eta_k^\alpha(t) + 2 \sqrt{\delta_k} \right) \, .
\end{equation}
\ni
Theorem~\ref{GPScool} gives good estimates for $\widehat{\Pro}_{f_R} \left[ E_k \right]$. It thus remains to estimate $\delta_k$ and $\eta_k^\alpha(t)$.

\paragraph{An estimate for \textbf{$\delta_k$}.} In this paragraph, we assume that $\beta$ satisfies the hypothesis of Corollary~\ref{soimportantcor}. We have the following estimate on $\delta_k$ which is a direct consequence of Corollary~\ref{soimportantcor} since $\widehat{\Pro}_{f_R} \left[ E_k \right] \delta_k = \widehat{\Pro}_{f_R} \left[ |S| \in [2^{k},2^{k+1}-1], \, S \nsubseteq (-2^{k\beta},2^{k\beta})^2 \right]$.

\begin{lem}\label{lemmadirectconsequence}
There exists a constant $C=C(\beta) < +\infty$ such that, for all $k \in \N$:
\[
\widehat{\Pro}_{f_R} [ E_k ] \, \delta_k \leq C \frac{\alpha_1(R)}{\alpha_1(2^{k\beta})} \left( \frac{2^{k\beta}}{\rho(2^k)} \right)^{1-\epsilon} \alpha_4( \rho(2^k), 2^{k\beta} ) \, ,
\]
where $\epsilon$ is the constant of Theorem~\ref{soimportant}.
\end{lem}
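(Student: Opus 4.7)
The plan is to observe that this lemma is essentially a one-line deduction from the second item of Corollary~\ref{soimportantcor}. Indeed, the left-hand side of the inequality we want to prove has been identified in the excerpt as
\[
\widehat{\Pro}_{f_R}[E_k]\,\delta_k \;=\; \widehat{\Pro}_{f_R}\!\left[\,|S|\in[2^k,2^{k+1}-1],\ S\not\subseteq(-2^{k\beta},2^{k\beta})^2\,\right],
\]
by the very definitions of $E_k$, $\nu_k$ and $\delta_k$ (recall that $\delta_k$ was defined via $\nu_k(F_k)=(1-\delta_k)\nu_k(E_k)$, so that $\nu_k(E_k)\delta_k=\nu_k(E_k\setminus F_k)$, which is exactly the above probability).

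The first step is then simply to note the event inclusion
\[
\bigl\{\,|S|\in[2^k,2^{k+1}-1],\ S\not\subseteq(-2^{k\beta},2^{k\beta})^2\,\bigr\}\;\subseteq\;\bigl\{\,|S|<2^{k+1},\ S\not\subseteq(-2^{k\beta},2^{k\beta})^2\,\bigr\},
\]
so that monotonicity of $\widehat{\Pro}_{f_R}$ reduces the task to bounding the probability of the right-hand event.

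The second (and final) step is to quote item~$2$ of Corollary~\ref{soimportantcor}, with the same choice of $\beta>1$ satisfying the hypothesis $\rho(2^{k+1})\leq 2^{k\beta}$ for $k\geq \overline{k}(\beta)$ (which, by assumption of the lemma, we have). That corollary gives precisely
\[
\widehat{\Pro}_{f_R}\!\left[\,|S|<2^{k+1},\ S\not\subseteq(-2^{k\beta},2^{k\beta})^2\,\right]\;\leq\; C\,\frac{\alpha_1(R)}{\alpha_1(2^{k\beta})}\left(\frac{2^{k\beta}}{\rho(2^k)}\right)^{1-\epsilon}\alpha_4\!\left(\rho(2^k),2^{k\beta}\right),
\]
with $\epsilon$ the exponent from Theorem~\ref{soimportant} and $C=C(\beta)<+\infty$. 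Combining the two steps yields the claimed bound.

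There is no real obstacle here: all the content of the lemma is packaged into Corollary~\ref{soimportantcor}, whose proof itself relies on Theorem~\ref{soimportant} for the case $\rho(2^{k+1})\leq 2^{k\beta}$ and on Theorem~\ref{GPScool} in the small-$k$ regime. Thus the only thing to check carefully is that the definitions of $\delta_k$, $E_k$ and $F_k$ match, which is immediate from the setup in the proof of Theorem~\ref{tropbien}. The lemma should be recorded as a short paragraph with no additional computation.
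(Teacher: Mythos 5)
Your proposal is correct and is exactly the paper's argument: the identity $\widehat{\Pro}_{f_R}[E_k]\,\delta_k = \widehat{\Pro}_{f_R}\bigl[\,|S|\in[2^k,2^{k+1}-1],\ S\not\subseteq(-2^{k\beta},2^{k\beta})^2\,\bigr]$ follows from the definitions of $E_k$, $F_k$, $\nu_k$ and $\delta_k$, after which enlarging the event to $\{|S|<2^{k+1},\ S\not\subseteq(-2^{k\beta},2^{k\beta})^2\}$ and invoking item~2 of Corollary~\ref{soimportantcor} gives the claim. Nothing further is needed.
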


Thanks to Theorem~\ref{GPScool}, Lemma~\ref{lemmadirectconsequence}, and the quasi-multiplicativity property, we have:
\begin{align}\label{e.dich}
& \sum_{k \in \N} \widehat{\Pro}_{f_R} [ E_k ] \sqrt{\delta_k} \nn\\
& = \sum_{k \in \N} \sqrt{\widehat{\Pro}_{f_R} [ E_k ]} \sqrt{\widehat{\Pro}_{f_R} [ E_k ] \delta_k} \nn\\
&\leq \sum_{k \in \N} \sqrt{\widehat{\Pro}_{f_R} \left[ |S| < 2^{k+1} \right]} \sqrt{\widehat{\Pro}_{f_R} [ E_k ] \delta_k} \nn \\
& \leq \grandO{1} \sum_{k \in \N} \sqrt{\frac{\alpha_1(R)}{\alpha_1(\rho(2^k))}} \; \sqrt{\frac{\alpha_1(R)}{\alpha_1(2^{k\beta})} \left( \frac{2^{k\beta}}{\rho(2^k)} \right)^{1-\epsilon} \alpha_4( \rho(2^k), 2^{k\beta} )} \nn \\
& \leq \grandO{1} \alpha_1(R) \sum_{k \in \N} \sqrt{\frac{1}{\alpha_1(\rho(2^k)) \, \rho(2^k)^{1-\epsilon} \, \alpha_4(\rho(2^k))} \frac{2^{k\beta(1-\epsilon)} \, \alpha_4(2^{k\beta})}{\alpha_1(2^{k\beta})}} \, .
\end{align}
Thanks to~\eqref{poly} we know that (for $r \geq 1$):
\begin{equation}\label{premierestimatedelta}
\frac{1}{\alpha_1(\rho(r)) \, \rho(r)^{1-\epsilon} \, \alpha_4(\rho(r))} \leq \grandO{1} r^{\grandO{1}} \, .
\end{equation}
Thanks to the right-hand inequality of~\eqref{alpha4} (that is stated for $(r,R)$ but that we use for $(1,r)$) and thanks to the FKG-inequality (which implies that $\alpha_2(r) \leq \grandO{1} \alpha_1(r)^2$), we have:
\begin{equation}\label{deuxiemeestimatedelta}
\frac{r^{1-\epsilon}\alpha_4(r)}{\alpha_1(r)} \leq \grandO{1} \frac{r^{1-\epsilon} \alpha_4(r)}{\sqrt{\alpha_2(r)}} \leq \grandO{1} r^{-\epsilon} \, .
\end{equation}
Currently, there is no better estimate than~\eqref{deuxiemeestimatedelta} for bond percolation on $\Z^2$: for this model, it is only known that $\frac{r \, \alpha_4(r)}{\alpha_1(r)} \leq \grandO{1}$ (in particular it is not proved that $\alpha_2(r) \leq \grandO{1} r^{-h} \, \alpha_1(r)^2$ for some fixed $h>0$, see~$(9.2)$ in~\cite{SS10} for more about this inequality). However, for site percolation on $\T$, it is known that $\frac{r \, \alpha_4(r)}{\alpha_1(r)} = r^{-7/48 + \petito{1}}$.
\medskip

From~\eqref{premierestimatedelta} and~\eqref{deuxiemeestimatedelta}, we deduce that there exist some constants $\beta_0 < +\infty$ and $c_3>0$ such that, for all $\beta' \geq \beta_0$ (and for all $r \geq 1$), we have:
\begin{equation}\label{beta0}
\frac{1}{\alpha_1(\rho(r)) \, \rho(r) \, \alpha_4(\rho(r))} \frac{r^{\beta'(1-\epsilon)} \, \alpha_4(r^{\beta'})}{\alpha_1(r^{\beta'})} \leq \frac{1}{c_3} r^{-c_3} \, .
\end{equation}

Hence, we have the following: If $\beta$ satisfies the hypothesis of Corollary~\ref{soimportantcor} and is larger than or equal to $\beta_0$, then:
\begin{equation}\label{estimatedelta}
\sum_{k \in \N} \widehat{\Pro}_{f_R} [ E_k ] \sqrt{\delta_k} \leq \grandO{1} \alpha_1(R) \sum_{k \in \N} \sqrt{ \frac{1}{c_3} 2^{-kc_3} } \leq \grandO{1} \alpha_1(R) \, .
\end{equation}

\begin{remark}\label{r.epscrucial}
We see from the proof of~\eqref{estimatedelta} (see in particular~\eqref{deuxiemeestimatedelta} and the small paragraph below it) that the exponent $\eps$ in Theorem~\ref{soimportant} is crucial for our proof to work. Notice in particular the nice decoupling of scales in~\eqref{e.dich} (under the square root). Even if the exponent $\eps$ is very small, one can tune $\beta$ to be large enough so that the right-hand side in~\eqref{e.dich} wins against the left-hand side. This is the main constraint which will prevent us from obtaining exceptional times for larger values of $\alpha$ (see Figure~\ref{f.main} for the range of $\alpha$ we manage to cover with these estimates in the case of site percolation on $\T$).
\end{remark}

\paragraph{An estimate for \textbf{$\eta^\alpha_k(t)$}.} In this paragraph, we assume that $1-\alpha \, \beta > 0$. We first prove the following lemma:

\begin{lem}\label{lemmaeta}
There exists $c_1=c_1(\alpha,\beta)>0$ such that, for all $t \in [0,1]$ and all $k \in \N$:
\[
\eta_k^\alpha(t) \leq \frac{1}{c_1} \exp \left( -c_1 \, t \, 2^{k(1-\alpha\beta)} \right) \, .
\]
\end{lem}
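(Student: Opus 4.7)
The starting observation is that $\pi_t$ is a random permutation of $\mathcal{I}_R$, so $|\pi_t(S)| = |S|$; hence for any $S \in F_k$ the event $\pi_t(S) \in F_k$ coincides with $\pi_t(S) \subseteq A$ where $A := (-2^{k\beta},2^{k\beta})^2$. Writing $X_t := \pi_t(S)$, we thus have
\[
\eta_k^\alpha(t) = \sup_{S \in F_k} \P\bigl[X_t \subseteq A\bigr],
\]
and the plan is to estimate $\P[X_t \subseteq A]$ uniformly in $S$ by combining two ingredients: a negative association property for the random set $X_t$, and a uniform lower bound on the escape probability of a single tagged particle.

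For the first ingredient, since $K^\alpha$ is symmetric and the dynamics is started from the deterministic set $S$, the law of $X_t$ is the state of a symmetric exclusion process and is therefore a \emph{strongly Rayleigh}---hence \emph{negatively associated}---measure (Borcea, Br\"and\'en, Liggett, \emph{J.~Amer.~Math.~Soc.} 22 (2009)). Applying negative association to the occupation variables $\{\un_{v \in X_t}\}_{v \in \mathcal{I}_R}$ gives
\[
\P[X_t \subseteq A] = \E\Bigl[\prod_{v \notin A}\bigl(1 - \un_{v \in X_t}\bigr)\Bigr] \leq \prod_{v \notin A}\bigl(1 - \P[v \in X_t]\bigr) \leq \exp\Bigl(-\sum_{v \notin A} \P[v \in X_t]\Bigr).
\]
For the second ingredient, the graphical construction of Appendix~\ref{a.graphical} shows that for each fixed $u$ the marginal law of $\pi_t(u)$ is that of a rate-one continuous-time random walk on $\mathcal{I}_R$ with jump kernel $K^\alpha$. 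By Fubini this yields $\sum_{v \notin A}\P[v \in X_t] = \sum_{u \in S}\P[\pi_t(u) \notin A]$. For every $u \in A$, the power-law tail of $K^\alpha$ gives $\sum_{w \notin \widetilde A} K^\alpha(u,w) \geq c \cdot 2^{-k\alpha\beta}$ with $\widetilde A := (-2 \cdot 2^{k\beta}, 2 \cdot 2^{k\beta})^2$, uniformly in $u$ and in $k$ large enough. Hence the simple event ``the walk's first jump occurs at some time $s \in (0,t]$, lands outside $\widetilde A$, and no further jump occurs before time $t$'' shows that $\P[\pi_t(u) \notin A] \geq c' \, t \cdot 2^{-k\alpha\beta}$ for all $u \in A$ and $t \in [0,1]$.

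Combining the two ingredients with $|S| \geq 2^k$, one obtains
\[
\P[X_t \subseteq A] \leq \exp\bigl(-c' |S| \, t \, 2^{-k\alpha\beta}\bigr) \leq \exp\bigl(-c' \, t \, 2^{k(1-\alpha\beta)}\bigr),
\]
which yields the lemma after taking $c_1 \leq \min(c',1)$ small enough that the prefactor $1/c_1$ also accommodates the finitely many small values of $k$ for which the uniform estimate on $\sum_{w \notin \widetilde A} K^\alpha(u,w)$ has not yet kicked in. The main obstacle in this plan is the appeal to negative association of symmetric exclusion, which I rely on as a black box; an elementary proof via the graphical construction would need to carefully track labels that share Poisson clocks and appears to be more delicate.
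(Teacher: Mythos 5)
Your argument is correct, but it follows a genuinely different route from the paper's. You reduce the problem to a first-moment estimate by invoking negative association of the symmetric exclusion process: since the occupation measure of $\pi_t(S)$ started from a deterministic set $S$ is strongly Rayleigh, hence negatively associated (Borcea--Br\"and\'en--Liggett), you decouple the occupation events $\{v\in X_t\}$ over $v\notin A$ and then only need to lower-bound $\E\bigl[|X_t\setminus A|\bigr]=\sum_{u\in S}\Pro[\pi_t(u)\notin A]$, which you do via the marginal random walk of a single tagged label. This is a clean reduction and the escape-probability estimate (exactly one jump, landing outside a slightly enlarged box, no further jump) is fine and matches the form of the conclusion. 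The paper instead avoids any appeal to negative dependence: it uses an alternative graphical construction in which each particle carries its own rate-one clock and attempts a $K^\alpha$-jump that is suppressed if the target site is occupied, and then runs an elementary iterated-conditioning argument over the particles that ring exactly once in $[0,t]$. At each such step the conditional probability of landing outside the box \emph{and} at a vacant site is bounded below by $c\,2^{-k\alpha\beta}$ (the vacancy issue is handled by comparing the number of already-escaped particles to the size of the annulus $A_k^\beta$), and iterating gives the same exponential bound. What your route buys is brevity and conceptual clarity, at the cost of relying on the BBL strongly-Rayleigh/NA theorem as a black box (and, strictly, its extension to the infinite lattice, which is standard but should be cited or justified by finite approximation). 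What the paper's route buys is complete self-containment using only the graphical construction and elementary Binomial/exponential estimates. Both arrive at $\exp(-c\,t\,2^{k(1-\alpha\beta)})$, and the prefactor $1/c_1$ handles small $k$ in the same way in both arguments.

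One small point to watch: the event $\{\pi_t(S)\in F_k\}$ is \emph{contained in} $\{\pi_t(S)\subseteq A\}$ rather than equal to it (because $F_k$ also requires $\pi_t(S)\subseteq\mathcal{I}_R$, which is a subset of $\mathcal{I}$), but since you only need an upper bound on $\eta_k^\alpha(t)$ this inclusion is all that is used, and the paper does the same.
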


\begin{proof}
Let $S$ be a finite subset of $\mathcal{I}$, see $S$ as a set of $|S|$ particles, and construct an interacting particles system as follows: Associate to each particle of $S$ an exponential clock of parameter $1$, independent of the other clocks. If the clock of a particle rings and if its current location is $x \in \cal I$, then the particle attempts to jump to $y \in \cal I$ with probability $K^\alpha(x,y)$. If there is another particle at $y$, then the particle stays at $x$, while if there is no other particle at $y$ then the particle jumps to $y$. This way, we obtain for each $s \geq 0$ a random set $\widetilde{\pi}_s(S) \subseteq \cal I$ of $|S|$ particles. It is not difficult to see that, while each particle do not evolve exactly like in our Definition~\ref{d.exclusion}, the \textbf{whole set} of particles does evolve like in this definition. More precisely:
\[
(\widetilde{\pi}_s(S))_{s \geq 0} \overset{(d)}{=} (\pi_s(S))_{s \geq 0} \, ,
\]
where $\pi$ is defined in Appendix~\ref{a.graphical}.

In particular, Lemma~\ref{lemmaeta} is equivalent to the following statement: There exists $c_1=c_1(\alpha,\beta)>0$ such that, for all $t \in [0,1]$ and all $k \in \N$:
\begin{equation}\label{e.reecriture_lemmaeta}
\max_{S \in F_k} \Pro \left[ \widetilde{\pi}_t(S) \in F_k \right] \leq \frac{1}{c_1} \exp \left( -c_1 \, t \, 2^{k(1-\alpha\beta)} \right) \, .
\end{equation}
Let us prove~\eqref{e.reecriture_lemmaeta}. Fix $S \in F_k$ and let $U \subseteq S$ be the random subset of all the particles whose clock has rung exactly one time between time $0$ and time $t$, which happens with probability $\asymp t$ for each particle (remember that $t \in [0,1]$). By independence and by classical estimates on Binomial distributions, we obtain that there exists $c_2 > 0$ such that, with probability al least $\frac{1}{c_2} e^{-c_2t|S|}$, the size of $U$ is at least $c_2t|S|$. Write $U = \{ u_1, \cdots, u_{|U|} \}$, where the particles $u_i$ are indexed so that the clock of $u_i$ has rung before the clock of $u_{i+1}$. Also, write $\tau_i < t$ for the first time the clock of $u_i$ has rung, and write $V_i$ for the location to which the particle initially located at $u_i$ has attempted to jump at time $\tau_i$ (remember that $V_i$ follows the probability law $K^\alpha(u_i,\cdot)$).
\medskip

Now, \textbf{condition on $U$ and on $\tau_1, \cdots, \tau_{|U|}$}, and write $\widetilde{\Pro}$ for the conditional probability measure. Let $\mathcal{F}_i$ denote the $\sigma$-algebra generated by what happens strictly before time $\tau_i$. Also, write $S_i \subseteq \left( (-2^{k\beta},2^{k\beta})^2 \right)^c$ for the set of particles that are outside of $(-2^{k\beta},2^{k\beta})^2$ at time $\tau_i^-$. Note that $S_i$ and $V_{i-1}$ are measurable with respect to $\mathcal{F}_i$ while $V_i$ is independent of $\mathcal{F}_i$. Let us estimate $\widetilde{\Pro} \left[ \widetilde{\pi}_t(S) \in F_k \right]$. To do so, note that $\{ \widetilde{\pi}_t(S) \in F_k \}$ holds if there exists $u_i \in U$ such that the particle initially located at $u_i$ has jumped outside $(-2^{k\beta},2^{k\beta})^2$ at time $\tau_i$ (indeed, since the clock of $u_i$ has rung only one time before time $t$, the particle cannot go back to $(-2^{k\beta},2^{k\beta})^2$). Moreover, the particle initially located at $u_i$ has jumped outside $(-2^{k\beta},2^{k\beta})^2$ at time $\tau_i$ if and only if $V_i \notin (-2^{k\beta},2^{k\beta})^2 \cup S_i$. Let us estimate the quantity $\widetilde{\Pro} \left[ V_i \notin (-2^{k\beta},2^{k\beta})^2 \cup S_i \cond \mathcal{F}_i \right]$. To this purpose, observe that there exists a constant $c_3=c_3(\alpha) > 0$ such that, for every $y \in A_k^\beta := [-2^{k\beta+10},2^{k\beta+10}]^2 \setminus (-2^{k\beta},2^{k\beta})^2$ and every $x \in S$, we have:
\[
K^\alpha(x,y) \geq \frac{c_3}{2^{k\beta(2+\alpha)}} \, .
\]
Observe also that $|\mathcal{I} \cap A_k^\beta| \geq 2 \times 2^{2k\beta} \geq 2^{2k\beta} + |S|$ for every $S \in F_k$. Hence:
\begin{eqnarray*}
\widetilde{\Pro} \left[ V_i \notin (-2^{k\beta},2^{k\beta})^2 \cup S_i \cond \mathcal{F}_i \right] & \geq & \left( |\mathcal{I} \cap A_k^\beta| - |S_i| \right) \times \frac{c_3}{2^{k\beta(2+\alpha)}}\\
& \geq & \left( |\mathcal{I} \cap A_k^\beta| - |S| \right) \times \frac{c_3}{2^{k\beta(2+\alpha)}}\\
& \geq & \frac{c_3}{2^{k\alpha\beta}} \, .
\end{eqnarray*}
We obtain that $\widetilde{\Pro} \left[ \widetilde{\pi}_t(S) \in F_k \right]$ is at most:
\begin{align*}
& \widetilde{\Pro} \left[ \forall i \in \{ 1, \cdots, |U| \}, \, V_i \in (-2^{k\beta},2^{k\beta})^2 \cup S_i \right]\\
& = \widetilde{\E} \left[ \widetilde{\Pro} \left[ \forall i \in \{ 1, \cdots, |U| \}, \, V_i \in (-2^{k\beta},2^{k\beta})^2 \cup S_i \cond \mathcal{F}_{|U|} \right] \right]\\
& = \widetilde{\E} \left[ \un_{ \left\lbrace  \forall i \in \{ 1, \cdots, |U|-1 \}, \, V_i \in (-2^{k\beta},2^{k\beta})^2 \cup S_i \right\rbrace } \widetilde{\Pro} \left[ V_{|U|} \in (-2^{k\beta},2^{k\beta})^2 \cup S_{|U|} \cond \mathcal{F}_{|U|} \right] \right]\\
& \leq \widetilde{\Pro} \left[ \forall i \in \{ 1, \cdots, |U|-1 \}, \, V_i \in (-2^{k\beta},2^{k\beta})^2 \cup S_i \right] \left( 1 - \frac{c_3}{2^{k\alpha\beta}} \right)\\
& \leq \cdots\\
& \leq \left( 1-\frac{c_3}{2^{k\alpha\beta}} \right)^{|U|} \, .
\end{align*}
Finally, we have:
\[
\Pro \left[ \widetilde{\pi}_t(S) \in F_k \right] \leq \Pro \left[ U < c_2t |S| \right] + (1-\frac{c_3}{2^{k\alpha\beta}})^{c_2t |S|} \, ,
\]
where $c_2$ was chosen so that $\Pro \left[ U < c_2t |S| \right] \leq \frac{1}{c_2}e^{-c_2t|S|}$. Since $2^k \leq |S|$ we have:
\[
\Pro \left[ \widetilde{\pi}_t(S) \in F_k \right] \leq \frac{1}{c_2}e^{-c_2t2^k} +  (1-\frac{c_3}{2^{k\alpha\beta}})^{c_2t 2^k} \, ,
\]
which implies~\eqref{e.reecriture_lemmaeta}.
\end{proof}

We now combine Lemma~\ref{lemmaeta} and Theorem~\ref{GPScool}. First, note that the left-hand inequality of~\eqref{alpha4} (that is stated for $(r,R)$ but that we use for $(1,\rho(l))$) implies that:
\[
\frac{1}{\alpha_1(\rho(l))} \leq \frac{1}{c_2} \, \left( \rho(l)^2 \, \alpha_4(\rho(l)) \right) \rho(l)^{-c_2} = \frac{1}{c_2} \, l \, \rho(l)^{-c_2} \, ,
\]
for some $c_2 > 0$ (we have also used that $\rho(l)^2 \alpha_4(\rho(l)) = l$, which is a simple consequence of the definition of $\rho$). Note also that $\rho(l) \geq \sqrt{l}$ (since $r^2 \alpha_4(r) \leq r^2$). Therefore, by using Theorem~\ref{GPScool} we obtain that there exists some $\epsilon_1 > 0$ such that, for all $l \in \N_+$:
\begin{equation}\label{GPSrewritten}
\widehat{\Pro}_{f_R} \left[ |S| < l \right] \leq \frac{1}{\epsilon_1} \alpha_1(R) (l/2)^{1-\epsilon_1} \, ,
\end{equation}
where the constant $2$ is included to simplify the calculations below. We fix such an $\epsilon_1$ for the rest of the proof. We can (and do) assume that $\epsilon_1 \in (0,1)$.
Remember that we have assumed that $1 - \alpha \, \beta > 0$. We have:
\begin{align*}
& \sum_{k \in \N} \widehat{\Pro}_{f_R} \left[ E_k \right] \, \eta_k^\alpha(t) \leq \, \frac{1}{c_1} \sum_{k \in \N} \widehat{\Pro}_{f_R} [ E_k ] \exp \left( - c_1 \, t \, 2^{k(1-\alpha\beta)} \right)\\
& = \frac{1}{c_1} \sum_{k' \in \N} \sum_{j \in \N \, : \, \left( \frac{2^{k'}-1}{c_1t} \right)^{\frac{1}{1-\alpha\beta}} \leq 2^j < \left( \frac{2^{k'+1}-1}{c_1t} \right)^{\frac{1}{1-\alpha\beta}}} \widehat{\Pro}_{f_R} [ E_j ] \exp \left(- c_1 \, t \, 2^{j(1-\alpha\beta)} \right)\\
& \leq \frac{1}{c_1} \sum_{k' \in \N} \widehat{\Pro}_{f_R} \left[ |S| < 2 \left( \frac{2^{k'+1}-1}{c_1 t} \right)^{1 / (1-\alpha\beta)} \right] \exp \left( -(2^{k'}-1) \right) \, .
\end{align*}
We now use~\eqref{GPSrewritten}. It implies that the above is at most:
\[
\frac{1}{\epsilon_1c_1} \alpha_1(R) \sum_{k' \in \N} \left( \frac{2^{k'+1}-1}{c_1 t} \right)^{(1-\epsilon_1) / (1-\alpha\beta)} \exp \left( -(2^{k'}-1) \right) \, .
\]
Hence, there exists a constant $C'=C'(\alpha,\beta)<+\infty$ such that, if $1-\alpha \, \beta > 0$, then:
\begin{equation}\label{estimateeta}
\sum_{k \in \N} \widehat{\Pro}_{f_R} \left[ E_k \right] \, \eta_k^\alpha(t) \leq C' \, \alpha_1(R) \left( \frac{1}{t} \right)^{(1-\epsilon_1) / (1-\alpha\beta)} \, .
\end{equation}

\begin{remark}
Notice from the above inequalities that it was important to obtain a relatively sharp upper-bound on $\eta_k^\alpha(t)$ in Lemma \ref{lemmaeta} in order to optimise the exponent  of $(1/t)$ in~\eqref{estimateeta}. 
\end{remark}

\paragraph{End of the proof of existence of exceptional times.}

We are now in shape to prove that, if $\alpha$ is sufficiently small, then a.s. there are exceptional times for the $K^\alpha$-exclusion dynamical percolation of parameter $p=1/2$. Thanks to~\eqref{secondsecondmoment}, we know that it is sufficient to prove that there exists a constant $C=C(\alpha)<+\infty$ such that, for all $R \geq 1$:
\begin{equation}\label{whatwewanttoprove}
\int_0^1 \sum_{S,S' \subseteq \mathcal{I}_R} \sqrt{\widehat{\Pro}_{f_R}\left[ \lbrace S \rbrace \right]} \sqrt{\widehat{\Pro}_{f_R}\left[ \lbrace S' \rbrace \right]} \, K_t^\alpha(S,S') \,  dt \leq C \, \alpha_1(R) \, .
\end{equation}
Fix a constant $\beta_0$ that satisfies~\eqref{beta0} and choose $\beta \geq \beta_0$ that satisfies the hypothesis of Corollary~\ref{soimportantcor}. Next, choose $\alpha > 0$ sufficiently small so that $0<(1-\epsilon_1)/(1-\alpha\beta)<1$ (so, in particular, $1-\alpha \, \beta > 0$). Now, note that~\eqref{whathelpstoprove} implies that - in order to prove~\eqref{whatwewanttoprove} - it is sufficient to show the following inequalities:
\begin{equation}\label{whatwewanttoprove2}
\int_0^1 \sum_{k \in \N} \widehat{\Pro}_{f_R} [ E_k ] \sqrt{\delta_k} \, dt \leq \grandO{1} \, \alpha_1(R) \, ,
\end{equation}
and:
\begin{equation}\label{whatwewanttoprove1}
\int_0^1 \sum_{k \in \N} \widehat{\Pro}_{f_R} \left[ E_k \right] \, \eta_k^\alpha(t) \, dt \leq \grandO{1} \, \alpha_1(R)
\end{equation}
(where the constants in the $\grandO{1}$'s may depend on $\alpha$). The inequality~\eqref{whatwewanttoprove2} is a direct consequence of~\eqref{estimatedelta} since $\beta$ satisfies the hypothesis of Corollary~\ref{soimportantcor} and $\beta \geq \beta_0$ (note that, in~\eqref{whatwewanttoprove2}, the quantities do not depend on $\alpha$). On the other hand, the inequality~\eqref{whatwewanttoprove1} is a direct consequence of~\eqref{estimateeta}. Indeed, since $(1-\epsilon_1)/(1-\alpha\beta)<1$, we have:
\[
\int_0^1 \left( \frac{1}{t} \right)^{(1-\epsilon_1)/(1-\alpha\beta)} \, dt < +\infty \, .
\]

\paragraph{The constant \textbf{$\alpha_0 = 217/816$}.}

In this paragraph, we prove the following quantitative result: For site percolation on $\T$, there exist exceptional times for any $\alpha < \alpha_0 := 217/816$. We only work with site percolation on $\T$ and we use the computations of the arm-exponents (see Subsection~\ref{sectionarm}). Thanks to these computations (which imply in particular that $\rho(l) = l^{4/3+\petito{1}}$) we can say that:

\begin{enumerate}
\item The hypothesis on $\beta$ in Corollary~\ref{soimportantcor} is satisfied for any $\beta > 4/3$.
\item Any $\beta_0 > (17/36) \cdot (48/7) = 68/21$ satisfies~\eqref{beta0} (with $c_3 = c_3(\beta_0)$). Let us detail a little this result: it comes from the two following calculations:
\begin{eqnarray*}
\frac{1}{\alpha_1(\rho(r)) \, \rho(r) \,\alpha_4(\rho(r))} & = & r^{(5/48) \cdot (4/3) - 4/3 + (5/4) \cdot (4/3) + \petito{1}}\\
& = & r^{17/36 + \petito{1}} \, ,
\end{eqnarray*}
and:
\begin{eqnarray}
\frac{r^{1-\epsilon}\alpha_4(r)}{\alpha_1(r)} & \leq & \frac{r \, \alpha_4(r)}{\alpha_1(r)} \label{wherewehavelostepsilon}\\
& = & r^{1-5/4+5/48+\petito{1}} \nonumber \\
& = & r^{-7/48+\petito{1}} \, . \nonumber
\end{eqnarray}
\item The inequality~\eqref{GPSrewritten} can be replaced by the following quantitative estimate: For all $l \in \N_+$:
\[
\widehat{\Pro}_{f_R} \left[ |S| < l \right] \leq \frac{1}{\epsilon_1} \alpha_1(R) (l/2)^{5/36+\petito{1}} \, ,
\]
where $\petito{1} \searrow 0$ as $l \rightarrow +\infty$. This implies that, in~\eqref{estimateeta}, the exponent $1-\epsilon_1$ can be replaced by any $\zeta > 5/36$: Let $\zeta > 5/36$ and assume that $1-\alpha \, \beta > 0$. Then, there exists a constant $C'=C'(\alpha,\beta,\zeta)$ such that:
\begin{equation}\label{estimateetabis}
\sum_{k \in \N} \widehat{\Pro}_{f_R} \left[ E_k \right] \eta^\alpha_k(t) \leq C' \, \alpha_1(R) \, \left( \frac{1}{t} \right)^{\frac{\zeta}{1-\alpha \beta}} \, .
\end{equation}
\end{enumerate}

If we use items~$1$ and~$2$ above, we deduce that~\eqref{estimatedelta} is true for any $\beta > 68/21 \vee 4/3 = 68/21$. Hence,~\eqref{whatwewanttoprove2} is true for any $\beta > 68/21$ (remember that the quantities in~\eqref{estimatedelta} and~\eqref{whatwewanttoprove2} do not depend on $\alpha$). Fix such a $\beta$ and let $\zeta > 5/36$. Let $\alpha > 0$ such that $0 < \zeta/(1-\alpha \beta) < 1$. By using~\eqref{estimateetabis}, we obtain that~\eqref{whatwewanttoprove1} is true with these choices of $\alpha$ and $\beta$. Finally, for any $\beta > 68/21$ and any $\zeta > 5/36$, we have the following: Let $\alpha > 0$ such that $0 < \zeta/(1-\alpha \beta) < 1$. Then, there exist exceptional times for the $K^\alpha$-exclusion process of parameter $p=1/2$. As such, we have obtained that our result of existence of exceptional times holds for any $\alpha > 0$ such that $(5/36)/(1-68\alpha/21)<1$ i.e. for any $\alpha \in (0,\alpha_0)$ with $\alpha_0 = 217/816$.

Actually, we can do a little better:\footnote{This observation can be skipped at first reading.} Note that, in~\eqref{wherewehavelostepsilon}, we have used the rough estimate $r^{-\epsilon} \leq 1$. Let us take into account the exponent $\epsilon$.
Thanks to Remark~\ref{aquantitaiveepsilon}, we know that Theorem~\ref{soimportant} holds with any $\epsilon < (-5/4+\zeta_4^{|\half}) \wedge 1/4$, where $\zeta_4^{|\half}$ is defined in Proposition~\ref{exponenthalfplane}. Therefore, any $\beta_0$ larger than
\[
\frac{17}{36} \cdot \left( \frac{7}{48} + \left( (- \frac 5 4+\zeta_4^{|\half}) \wedge \frac 1 4 \right) \right)^{-1} < \frac{68}{21}
\]
satisfies~\eqref{beta0} if $\epsilon$ is well chosen. Let:
\[
\theta := \left( \frac{17}{36} \cdot \left( \frac{7}{48} + \left( (- \frac 5 4+\zeta_4^{|\half}) \wedge \frac 1 4 \right) \right)^{-1} \right) \vee (4/3) \, .
\]
Finally, our result of existence of exceptional times for site percolation on $\T$ holds for any $\alpha$ such that:
\[
\frac{5}{36} \cdot \frac{1}{1 - \alpha \theta} < 1 \, ,
\]
i.e. for any $\alpha$ less than:
\begin{align}\label{e.alphamax}
\frac{31}{36 \theta} > \frac{217}{816} \, .
\end{align}

\paragraph{The Hausdorff dimension of the set of exceptional times.}

In this paragraph, we also only work with site percolation on $\T$ and we prove lower-bounds estimates on the Hausdorff dimension of the set of exceptional times (for upper-bounds, see Proposition~\ref{upperhausdorff}). Let us introduce the function:
\[
d(\alpha):= 1 - \frac 5 {36}  \, \frac 1 {1 - \frac{68}{21} \alpha } \, ,
\]
which was plotted in Figure \ref{f.main}. Also, let:
\[
d(\alpha,\beta,\zeta):= 1 - \frac \zeta {1 -  \alpha \, \beta } \, .
\]
Thanks to~\eqref{secondsecondmomenthausdorff}, we know that, in order to prove that the Hausdorff dimension of set of exceptional times for the $K^\alpha$-exclusion process is at least $d(\alpha)$, it is sufficient to prove the following: Let $\zeta > 5/36$ and $\beta > 68/21$. Also, let $\alpha > 0$ be such that $1-d(\alpha,\beta,\zeta) > 0$, and let $\gamma< d(\alpha,\beta,\zeta)$. Then, there exists a constant $C = C(\alpha,\beta,\zeta,\gamma) < +\infty$ such that, for all $R \geq 1$:
\[
\int_0^1 \left( \frac{1}{t} \right)^{\gamma} \sum_{S,S' \subseteq \mathcal{I}_R} \sqrt{\widehat{\Pro}_{f_R}\left[ \lbrace S \rbrace \right]} \sqrt{\widehat{\Pro}_{f_R}\left[ \lbrace S' \rbrace \right]} \, K_t^\alpha(S,S') \, dt \leq C \, \alpha_1(R) \, .
\]
The inequality~\eqref{whathelpstoprove} implies that it is actually sufficient to prove that, for any $\alpha$, $\beta$, $\zeta$ and $\gamma$ as above, we have:
\begin{equation}\label{hausdorfflast2}
\int_0^1 \left( \frac{1}{t} \right)^{\gamma} \sum_{k \in \N} \widehat{\Pro}_{f_R} \left[ E_k \right] \sqrt{\delta_k} \, dt \leq \grandO{1} \, \alpha_1(R) \, ,
\end{equation}
and:
\begin{equation}\label{hausdorfflast1}
\int_0^1 \left( \frac{1}{t} \right)^{\gamma} \sum_{k \in \N} \widehat{\Pro}_{f_R} \left[ E_k \right] \eta^\alpha_k(t)  \, dt \leq \grandO{1} \, \alpha_1(R) \, ,
\end{equation}
where the constants in the $\grandO{1}$'s may depend on $\alpha$, $\beta$, $\zeta$ and $\gamma$. Since $\widehat{\Pro}_{f_R} \left[ E_k \right] \sqrt{\delta_k}$ does not depend on $t$ and since $\gamma < 1$,~\eqref{hausdorfflast2} is actually equivalent to:
\begin{equation}\label{hausdorfflast2bis}
\sum_{k \in \N} \widehat{\Pro}_{f_R} \left[ E_k \right] \sqrt{\delta_k} \leq \grandO{1} \, \alpha_1(R) \, .
\end{equation}
The fact that~\eqref{hausdorfflast2bis} holds when $\beta > 68/21$ has been proved in the paragraph about the constant $\alpha_0 = 217/816$ (this was a consequence of items~$1$ and $2$ of this last paragraph). Now, let us concentrate on~\eqref{hausdorfflast1}. If we use~\eqref{estimateetabis}, we obtain that:
\[
\int_0^1 \left( \frac{1}{t} \right)^{\gamma} \sum_{k \in \N} \widehat{\Pro}_{f_R} \left[ E_k \right] \eta^\alpha_k(t)  \, dt \leq \grandO{1} \, \alpha_1(R) \, \int_0^1 \left( \frac{1}{t} \right)^{\gamma+\frac{\zeta}{1-\alpha \beta}} \, dt \, ,
\]
and we are done since $\gamma+\frac{\zeta}{1-\alpha \beta} = \gamma + 1-d(\alpha,\beta,\zeta) < 1$.
\end{proof}

\begin{rem}
Actually, by taking into account the $\epsilon$ of Theorem~\ref{soimportant}, we can go slightly above the quantity $d(\alpha)$: we can prove that the Hausdorff dimension of the set of exceptional times belongs to $(d(\alpha),31/36]$.
\end{rem}

\bigskip

We now use Lemma~\ref{singular}, Theorem~\ref{GPScool} and Proposition~\ref{soimportantlog} in order to prove Proposition~\ref{tropbienlog}.
\begin{proof}[Proof of Proposition~\ref{tropbienlog}] Consider $a>0$. The steps are exactly the same as in the proof of Theorem~\ref{tropbien}, with the following analogous definitions (where $\epsilon_0>0$ is such that $1-\epsilon_0(1+a)>0$): $E_k = \lbrace S \subseteq \mathcal{I}_R \, : \, |S| = k \rbrace$, $F_k = \lbrace S \in E_k \, : \, S \subseteq (-\exp((k+1)^{\epsilon_0}),\exp((k+1)^{\epsilon_0}))^2 \rbrace$, $\nu_k = \widehat{\Pro}_{f_R}$ restricted to $E_k$, $P^a_k(t) = K^a_{\log,t}$ restricted to $E_k$. We write $\delta_k$ and $\eta^a_k(t)$ as in Lemma~\ref{singular}. If we follow the proof of Lemma~\ref{lemmaeta}, we obtain that there exists $c_1 = c_1(a,\epsilon_0) > 0$ such that:
\[
\eta_k^a(t) \leq \frac{1}{c_1} \exp \left( - c_1 t k^{1-\epsilon_0(1+a)} \right).
\]
Moreover, if we follow the proof of~\eqref{estimateeta}, we obtain that there exists a constant $C'=C'(a,\epsilon_0)<+\infty$ such that:
\[
\sum_{k \in \N} \widehat{\Pro}_{f_R} \left[ E_k \right] \, \eta^a_k(t) \leq C'\alpha_1(R)\left( \frac{1}{t} \right)^{(1-\epsilon_1)/(1-\epsilon_0(1+a))} \, ,
\]
where $\epsilon_1$ is the constant of~\eqref{GPSrewritten}.

To estimate $\delta_k$, we use Proposition~\ref{soimportantlog} and, since $\exp(k^{\epsilon_0}/C)$ is super-polynomial, we easily obtain that:
\[
\sum_{k \in \N} \widehat{\Pro}_{f_R} \left[ E_k \right] \, \sqrt{\delta_k} \leq C'' \alpha_1(R) \, ,
\]
for some $C''=C''(a,\epsilon_0)<+\infty$.

Finally note that: \textit{(a)} for the two models, $\epsilon_0$ can be chosen as small as we want and: \textit{(b)} for site percolation on $\T$, we can replace the exponent $1-\epsilon_1$ by any $\zeta > 5/36$. We conclude exactly as in the proof of Theorem~\ref{tropbien}.
\end{proof}

\section{Clustering effect for the spectral sample}\label{sectionspectralnotlocalized}

In this section, we prove Theorem~\ref{soimportant} and Proposition~\ref{soimportantlog}. We are mostly inspired by the proof of the upper-bound part of Theorem~\ref{GPScool} as found in~\cite{GPS}. This proof is divided into three steps. The first step shows that there exists a constant $\theta < + \infty$ such that the following holds: For every $S \subseteq \mathcal{I}_R$, let $S_r$ be the set of the $r \times r$ squares of the grid $r\Z^2$ which intersect $S$. Then, for all $k \in \N_+$ we have:
\begin{equation}\label{verysmallspectrum}
\widehat{\Pro}_{f_R} \left[ |S_r|=k \right] \leq g(k) \frac{\alpha_1(R)}{\alpha_1(r)} \, ,
\end{equation}
where $g(k)=2^{\theta \log_2^2(k+2)}$. This is Proposition~$4.7$ in~\cite{GPS} (with $l=1$), see Subsection~\ref{sectionlittlepart} for a little discussion about the two other steps.

As mentioned in Subsection~\ref{ss.outline}, the proof of Proposition~\ref{soimportantlog} is easier than the proof of Theorem~\ref{soimportant}. More precisely, Proposition~\ref{soimportantlog} is a direct consequence of the proof of~\eqref{verysmallspectrum} as found in Section~4 of~\cite{GPS}, whereas to prove Theorem~\ref{soimportant} we will need to \textit{(a)} prove an analogue of~\eqref{verysmallspectrum} and \textit{(b)} use the two other steps of the proof of the upper-bound part of Theorem~\ref{GPScool} identically to~\cite{GPS}. See Subsection~\ref{ss.outline} for a discussion (and a list) of the differences with the proof in~\cite{GPS}.

Remember that $\widehat{\Q}_{f_R} = \alpha_1(R) \, \widehat{\Pro}_{f_R}$. For some proofs of Section~\ref{sectionspectralnotlocalized}, it will be more convenient to deal with $\widehat{\Q}_{f_R}$ than with $\widehat{\Pro}_{f_R}$.

\subsection{The proof of Proposition~\ref{soimportantlog}}\label{sectionlog}


Proposition~\ref{soimportantlog} is a simple consequence of Corollary~\ref{soimportantcor}. As explained above, this is also a direct consequence of the proofs of Section~$4$ of~\cite{GPS}. Let us say a little more about this. Consider $\epsilon_0 > 0$. What we want to prove is the existence of some $C = C(\epsilon_0) < +\infty$ such that, for all $k \in \N_+$:
\[
\widehat{\Q}_{f_R} \left[ |S| < k, S \not\subseteq (-\exp(k^{\epsilon_0}),\exp(k^{\epsilon_0}))^2 \right] \leq C \, \alpha_1(R)^2 \exp(-k^{\epsilon_0}/C) \, .
\]
In Subsection~$4.2$ of~\cite{GPS} the authors prove an analogue of~\eqref{verysmallspectrum} for the indicator function of the crossing of the square $g_n$ while \eqref{verysmallspectrum} itself is proved in Subsection~$4.4$ of~\cite{GPS}. In Remark~$4.5$ of~\cite{GPS}, it is explained that the proof written in their Subsection~$4.2$ implies a clustering effect for the spectral sample of $g_n$ conditioned to be of size less than $\log(n)$. With same ideas, a clustering effect for the spectral sample of $f_R$ conditioned to be of atypically small size can be extracted from Subsection~$4.4$ of~\cite{GPS}, and we can thus obtain Proposition~\ref{soimportantlog}.

\subsection{Small residual spectral mass away from the origin}\label{sectionlittlepart}

Let us recall what are the three main steps in~\cite{GPS} in order to prove the upper-bound part of Theorem~\ref{GPScool}:

\begin{enumerate}
\item The estimate~\eqref{verysmallspectrum} on the probability of a very small spectrum. 
\item The following result on the independence structure of the spectral sample (this is not exactly the result stated in~\cite{GPS} but its proof is exactly the same):
\begin{prop}[Proposition~$5.12$ in~\cite{GPS}]\label{weakindependence}
Let $1 \leq r \leq R$ and let $\mathcal{S}_{f_R}$ be a spectral sample of $f_R$. Also, let $W \subseteq \mathcal{I}_R$ and let $B \subseteq \R^2$ be an $l \times l'$ rectangle such that: \textit{(a)} $r \leq l,l' \leq 2r$ and \textit{(b)} $W \cap B = \emptyset$. Let $B' \subset B$ be the $r/3 \times r/3$-square which has the same center as $B$. Suppose that $B' \subset [-R,R]^2$ and $B \cap [-4r,4r]^2 = \emptyset$. We also assume that $r \geq \overline{r}$, where $\overline{r} < + \infty$ is some universal constant. Finally, let $\mathcal{Z}=\mathcal{Z}_r$ be a random subset of $\mathcal{I}_R$ that is independent of $\mathcal{S}_{f_R}$, where each element of $\mathcal{I}_R$ is in $\mathcal{Z}$ with probability $(r^2\alpha_4(r))^{-1}$ independently of the others. Then, there exists a universal constant $a>0$ such that:
\[
\Pro \left[ \mathcal{S}_{f_R} \cap B' \cap \mathcal{Z} \neq \emptyset \cond \mathcal{S}_{f_R} \cap B \neq \emptyset = \mathcal{S}_{f_R} \cap W \right] \geq a \, .
\]
\end{prop}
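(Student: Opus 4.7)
The plan is to run a conditional second-moment method on the counting variable $N := |\mathcal{S}_{f_R}\cap B'|$. Set $E := \{\mathcal{S}_{f_R}\cap B\neq\emptyset=\mathcal{S}_{f_R}\cap W\}$ and $p := (r^2\alpha_4(r))^{-1}$. Since $B'\subseteq B$ and $\mathcal{Z}$ is independent of $\mathcal{S}_{f_R}$, the Bernoulli sampling gives
\[
\Pro\left[\mathcal{S}_{f_R}\cap B'\cap \mathcal{Z} \neq \emptyset \,\Big|\, \mathcal{S}_{f_R}\right] = 1 - (1-p)^N\,,
\]
and this is bounded below by $1-e^{-1}$ as soon as $N \geq 1/p = r^2\alpha_4(r)$. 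It therefore suffices to prove that under the conditional law $\widehat{\Pro}_{f_R}\left[\,\cdot\,\mid E\right]$ the event $\{N \geq r^2\alpha_4(r)\}$ has probability bounded below by a universal constant. I plan to get this from the two moment bounds $\widehat{\E}_{f_R}[N \mid E] \gtrsim r^2\alpha_4(r)$ and $\widehat{\E}_{f_R}[N^2 \mid E] \lesssim (r^2\alpha_4(r))^2$, followed by Paley--Zygmund.

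Multiplying through by $\widehat{\Q}_{f_R}[E]$, the two moment estimates reduce to comparing three spectral quantities. For the denominator, the standard identity $\widehat{\Q}_{f}[\mathcal{S}\cap A=\emptyset]=\E[\E[f\mid\mathcal{F}_{A^c}]^2]$ together with a telescoping in $A$ yields
\[
\widehat{\Q}_{f_R}[E] = \E\bigl[(\E[f_R\mid\mathcal{F}_{W^c}] - \E[f_R\mid\mathcal{F}_{(W\cup B)^c}])^2\bigr].
\]
The first-moment sum $\sum_{x\in B'}\widehat{\Q}_{f_R}[x\in \mathcal{S}_{f_R},\,\mathcal{S}_{f_R}\cap W=\emptyset]$ equals $\sum_{x\in B'}\mathrm{Inf}_x(\E[f_R\mid\mathcal{F}_{W^c}])$, and the second-moment sum over pairs $x,y\in B'$ becomes a joint-influence sum. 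The spectral/pivotal dictionary then turns each piece into a probability of a $4$-arm event: the influence of $x$ becomes a $4$-arm event from $x$, the joint influence becomes simultaneous $4$-arm events at $x$ and $y$, and the difference $\E[f_R\mid\mathcal{F}_{W^c}] - \E[f_R\mid\mathcal{F}_{(W\cup B)^c}]$ encodes the event that $f_R$ depends on the state of $B$, which is controlled by a $4$-arm crossing of a scale-$r$ annulus inside $B$.

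The key geometric features that make the estimates match up are as follows. First, $B'$ lies at distance $\geq r/3$ from $\partial B$, so around every $x\in B'$ there is a square annulus of inner radius $\overline{r}$ and outer radius of order $r$ entirely contained in $B$, hence disjoint from $W$. Second, $B\cap[-4r,4r]^2 = \emptyset$ with $B'\subset[-R,R]^2$, so Kesten's arm separation together with the quasi-multiplicativity property~\eqref{quasimulti} lets one factor every relevant arm event into an inner piece, an ``across-$B$'' $4$-arm piece of probability $\asymp\alpha_4(r)$, and a ``common outer'' factor that encapsulates $f_R$, the conditioning on $W$, and the long $1$-arm from $\partial B$ down to the origin. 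The outer factor then cancels in the ratios, leaving $\widehat{\E}_{f_R}[N\mid E] \gtrsim |B'|\,\alpha_4(r) \asymp r^2\alpha_4(r)$ and, since the second moment is dominated by pairs at distance $\asymp r$ where the two $4$-arm events decouple, $\widehat{\E}_{f_R}[N^2\mid E] \lesssim (r^2\alpha_4(r))^2$.

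The main obstacle is the complete freedom in the choice of $W$: it can be very irregular and come arbitrarily close to $\partial B$. What saves the argument is exactly the hypothesis $W\cap B=\emptyset$: all the $4$-arm events above live strictly inside $B$, so they are untouched by the conditioning on $\mathcal{F}_{W^c}$, and the averaging over $W$ only affects the common outer factor, which cancels. The universal constant $\overline{r}$ in the statement is there to guarantee that Kesten's arm separation applies uniformly across the annuli we use. Once the matching bounds on the numerators and denominator are in hand, Paley--Zygmund combined with the Bernoulli thinning of the first paragraph delivers the announced universal lower bound $a>0$.
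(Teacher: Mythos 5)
The paper offers no proof of its own here---it simply defers to \cite{GPS}, noting that the statement is a slight variant but ``its proof is exactly the same''. Your second-moment outline (the orthogonality identity for the denominator $\widehat{\Q}_{f_R}[E]$, the pivotal/influence translation of the moment sums, Kesten's arm separation plus quasi-multiplicativity to peel off a common outer factor, the hypothesis $W\cap B=\emptyset$ shielding the inner $4$-arm events from the conditioning, and Paley--Zygmund combined with the Bernoulli thinning at the end) is precisely the route taken in Section~5 of \cite{GPS}, so the two approaches coincide.
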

Let us emphasize the fact that in the conditioning we cannot have $\mathcal{S}_{f_R} \cap W' \neq \emptyset$ for some $W' \subseteq \mathcal{I}_R$. In other words, we can only deal with \textbf{negative information} about the spectral sample.
\item A large deviation result:
\begin{prop}[Proposition~$6.1$ in~\cite{GPS}]\label{largedev} Take $I \neq \emptyset$ a finite set. Let $x$ and $y$ be $\left\lbrace 0,1 \right\rbrace^I$-valued random variables such that a.s. $y_i \leq x_i$ for all $i \in I$. We write $X = \sum_{i \in I} x_i$ and $Y = \sum_{i \in I} y_i$. Suppose that there exists a constant $a \in (0,1]$ such that, for  each $i \in I$ and every $J \subseteq I \setminus \left\lbrace i \right\rbrace$:
\[
\Pro \left[ y_i = 1 \cond y_j = 0 \; \forall j \in J \right] \geq a \, \Pro \left[ x_i = 1 \cond y_j = 0 \; \forall j \in J \right] \, .
\]
Then:
\[
\Pro \left[ Y = 0 \cond X > 0 \right] \leq \frac{1}{a} \, \E \left[ \exp \left( -aX/e \right) \cond X > 0 \right] \, .
\]
\end{prop}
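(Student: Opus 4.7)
My approach is a sequential stopping-time analysis in the spirit of Poisson-type large-deviation bounds. I fix an arbitrary enumeration of $I$ as $\{1,\ldots,n\}$ and introduce the stopping time $T := \inf\{k \geq 1 \, : \, y_k = 1\}$, with $\inf\emptyset = +\infty$, so that $\{Y = 0\} = \{T > n\}$. For every $k$, I denote
\[
p_k := \Pro[x_k = 1 \mid y_1 = \cdots = y_{k-1} = 0], \qquad q_k := \Pro[y_k = 1 \mid y_1 = \cdots = y_{k-1} = 0].
\]
Applying the hypothesis with $J = \{1,\ldots,k-1\}$ yields $q_k \geq a\,p_k$ for each $k$.

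The first step is a chain-rule bound on the survival probability: using $1-u\leq e^{-u}$,
\[
\Pro[T > n] = \prod_{k=1}^n (1 - q_k) \leq \exp\Big( -a \sum_k p_k \Big).
\]
Since $y_j \leq x_j$, the inclusion $\{T > n\} \subseteq \{T > k-1\}$ holds for every $k$, so
\[
\sum_k p_k \, \Pro[T > k-1] \;=\; \sum_k \Pro[x_k = 1, T > k-1] \;\geq\; \E\big[X \, \mathbf{1}_{\{T > n\}}\big].
\]
These two facts form a dichotomy: either the pivotal mass $\sum_k p_k$ is large and the survival probability is exponentially small, or it is small, in which case $X$ must typically be small on $\{T > n\}$ as well.

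The remaining task is to transform this dichotomy into $a\,\Pro[Y = 0,\, X > 0] \leq \E[e^{-aX/e}\mathbf{1}_{X > 0}]$. The idea is to condition on the value of $X$ and, for each $m \geq 1$, obtain
\[
a\,\Pro[Y = 0,\, X = m] \leq e^{-am/e}\,\Pro[X = m].
\]
The universal constant $1/e$ suggests optimizing a Markov-type inequality built from $\sup_{t\geq 0} t e^{-\lambda t} = (e\lambda)^{-1}$ when balancing the exponential decay from the first step against the expected $X$-mass on the survival event. Concretely, I would split according to the value of $X$ and re-run the sequential argument restricted to the $m$ indices on which $x$ equals $1$; this restriction is legitimate because the hypothesis only conditions on zeros of $y$, and such conditioning is preserved when one reduces the index set.

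The main obstacle lies in this last step. The quantity $\sum_k p_k$ is a deterministic number built from the chosen enumeration rather than a random variable defined on the probability space, so relating it directly to the random variable $X$ on the survival event requires an indirect mechanism. I expect the cleanest route to be an induction on $|I|$, peeling off indices one at a time and re-applying the hypothesis to a reduced problem; the inductive hypothesis then produces exactly the exponential factor needed to close the bookkeeping with the constant $1/e$. A Poissonisation/sprinkling argument, in which each index is independently ``tested'' with a carefully chosen probability and the threshold is optimised at the end, would be a viable alternative, and it is probably this optimisation step that forces the factor $1/e$ rather than the naive $e^{-aX}$ that a straight coupling would give.
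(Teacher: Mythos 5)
Your preliminaries are fine: for any fixed enumeration, the chain rule does give $\Pro[Y=0]=\prod_k(1-q_k)\leq\exp\bigl(-a\sum_k p_k\bigr)$, and the identity $\sum_k p_k\,\Pro[T>k-1]=\sum_k\Pro[x_k=1,\,T>k-1]\geq\E\bigl[X\,\mathbf{1}_{Y=0}\bigr]$ is correct. But you then acknowledge yourself that you cannot bridge the gap between the deterministic quantity $\sum_k p_k$ in the exponent and the random variable $X$ in the target bound, and the route you sketch to close it does not work. Conditioning on $\{X=m\}$ (equivalently, on the random set $\{i:x_i=1\}$) and ``re-running the sequential argument restricted to those $m$ indices'' is not legitimate: the hypothesis of the proposition controls conditional probabilities given events of the form $\{y_j=0\ \forall j\in J\}$ only, and there is no reason that $\Pro[y_i=1\mid y_j=0\ \forall j\in J,\ X=m]\geq a\,\Pro[x_i=1\mid y_j=0\ \forall j\in J,\ X=m]$ should continue to hold once you also condition on the value of $X$. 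The pointwise bound $a\,\Pro[Y=0,\,X=m]\leq e^{-am/e}\,\Pro[X=m]$ that you want to establish is therefore neither proved nor a consequence of what you have.

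What is missing is precisely the mechanism that trades the deterministic sum $\sum_k p_k$ for the random variable $X$; in the actual proof of Proposition~$6.1$ in~\cite{GPS} (which the present paper only cites), this is achieved by introducing auxiliary randomness in the scan order of $I$ and averaging the chain-rule bound over that randomness, with the constant $1/e$ emerging from the resulting optimisation. Your two suggested alternatives (induction on $|I|$; Poissonisation) are left entirely at the level of speculation and are not carried out, so the proposal does not constitute a proof of the proposition.
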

\end{enumerate}
These results combine well to prove the upper-bound part of Theorem~\ref{GPScool} (i.e. Theorem~$7.3$ in~\cite{GPS}). In order to prove Theorem~\ref{soimportant}, we will use Propositions~\ref{weakindependence} and~\ref{largedev} and we will need an analogue of the estimate~\eqref{verysmallspectrum} where we only look at the part of the spectral sample that is outside of the box of radius $r_0$. In the following subsection, we state and prove this analogous result.

\subsubsection{A combinatorial result in the flavour of Section~$4$ of~\cite{GPS}} \label{sss.verylittle}

The aim of this section is to prove the following result. This is the more technical part of the paper and we have chosen to: \textit{(a)} divide the proof in three paragraphs and \textit{(b)} at the beginning of each paragraph, explain what is similar to (and different from) Section~$4$ of~\cite{GPS}.

\begin{prop}\label{smallspectrumbis}
If $S \subseteq \mathcal{I}_R$, write $S^{r_0}_r$ for the set of the squares of the grid $r \Z^2$ that intersect $S \setminus (-r_0,r_0)^2$. Then, there exist constants $\theta < + \infty$ and $\epsilon > 0$ such that, for all $k \in \N_+$ and all $1 \leq r \leq r_0 \leq R/2$:
\[
\widehat{\Pro}_{f_R} \left[ |S_r^{r_0}| = k \right] \leq g(k) \, \frac{\alpha_1(R)}{\alpha_1(r_0)} \left( \frac{r_0}{r} \right)^{1-\epsilon} \alpha_4(r,r_0) \, ,
\]
where $g(k)=2^{\theta \text{log}_2^2(k+2)}$.
\end{prop}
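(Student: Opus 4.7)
My plan is to extend the proof of Proposition~4.7 of~\cite{GPS} (i.e.\ the estimate \eqref{verysmallspectrum}) so as to count only those squares of the grid $r\Z^2$ that meet $S\setminus(-r_0,r_0)^2$. Following the strategy of~\cite{GPS}, I would first expand the event $\{|S_r^{r_0}|=k\}$ over the possible locations $\{B_1,\ldots,B_k\}$ of the $k$ squares of $S_r^{r_0}$, so that the main task reduces to upper-bounding
\[
\widehat{\Q}_{f_R}\bigl[S_r^{r_0}=\{B_1,\ldots,B_k\}\bigr]
\]
for each fixed admissible configuration of $r$-squares outside $(-r_0,r_0)^2$, and then summing these bounds over all such configurations.

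For a fixed configuration, I would express (up to constants) the probability above as the probability of a ``four-arm-type'' event in a percolation configuration where both the squares $B_i$ and the mesoscopic box $(-r_0,r_0)^2$ are frozen: inside the box the configuration is arbitrary (which will eventually produce the factor $\alpha_1(R)/\alpha_1(r_0)$), while the spectral constraint at each $B_i$ forces a $4$-arm in a local annulus. As in Lemma~4.8 of~\cite{GPS}, this can be controlled by a product of arm-event probabilities indexed by a combinatorial \emph{annulus structure}. Compared to~\cite{GPS}, two modifications are needed: (i) the structure now systematically contains a ``macroscopic'' annulus with inner square $(-r_0,r_0)^2$, and (ii) for boxes $B_i$ close to $\partial(-r_0,r_0)^2$, the usual full-plane $4$-arm annulus must be replaced by a half-plane (or quarter-plane) annulus, which contributes factors of $\alpha_3^+$ or $\alpha_3^{++}$. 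This generalization is the content of Lemma~\ref{generalizedannulusstructuresinequality}, whose proof (Appendix~\ref{a.B}) will follow the proof of Lemma~4.8 of~\cite{GPS}, keeping track of the boundary terms.

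The main obstacle -- and the origin of the exponent $\epsilon>0$ in the statement -- is that the naive bound $\alpha_3^+(r,r_0)\asymp (r/r_0)^{2}$ is not strong enough to yield $(r_0/r)^{1-\epsilon}$ rather than $(r_0/r)^{1}$. To capture the missing polynomial saving I would rely on Lemma~\ref{lemmerigolo}, which provides a sharper bound on the $4$-arm event in the plane conditioned on the percolation configuration in a half-plane, together with the strict inequality $\zeta_4^{|\half}>5/4$ supplied by Lemma~\ref{exponenthalfplane}. This is precisely where $\epsilon>0$ is produced (see Remark~\ref{aquantitaiveepsilon}), and this is the single input that makes the whole argument work on $\Z^2$ as well as on $\T$.

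Once the per-configuration estimate is in hand, I would sum over the admissible choices of $\{B_1,\ldots,B_k\}$ exactly as in Subsection~4.4 of~\cite{GPS}, reorganising the sum by the dyadic scales of the annulus structure. The most delicate bookkeeping is to verify that the modified boundary annuli combine with the standard bulk annuli so that the sum is still controlled by the sub-exponential factor $g(k)=2^{\theta\log_2^2(k+2)}$; I expect this step to go through by a direct adaptation of the combinatorics already performed in~\cite{GPS}, since the new half-plane factors only improve the decay, and therefore do not endanger the counting estimates.
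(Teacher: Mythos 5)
Your step (ii) misidentifies the arm event attached to the boundary annuli $A_j$ centered on $\partial[-r_0,r_0]^2$. Unlike $\partial[-R,R]^2$, across which there genuinely is no percolation, the curve $\partial[-r_0,r_0]^2$ is only an artificial cut: percolation remains random on both sides of it, and only the \emph{spectral} constraint (namely that $S\setminus(-r_0,r_0)^2$ avoids $A_j$) is confined to the outer part --- in particular $S$ is still allowed to intersect $A_j\cap(-r_0,r_0)^2$. Consequently, when one runs the annulus-structure inequality (Lemma~\ref{generalizedannulusstructuresinequality}), the weight attached to such an $A_j$ is not $\alpha_3^+(\rho_j,\rho_j')^2$ (or $\alpha_3^{++}$) but the conditional $L^2$ four-arm quantity $h^{r_0}(A_j)^2 = \E_{1/2}\bigl[\Pro_{1/2}\bigl[\text{4-arm in }A_j \bigm| \mathcal{F}_{\mathcal{I}_R\cap(-r_0,r_0)^2}\bigr]^2\bigr]$. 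By Jensen this quantity is at least $\alpha_4(\rho_j,\rho_j')^2$ and at most $\alpha_4(\rho_j,\rho_j')$, so its exponent $\zeta_4^{|\half}$ lies in $(5/4,5/2]$ on $\T$ (Lemma~\ref{exponenthalfplane}); it is much larger than $\alpha_3^+(\rho_j,\rho_j')^2\asymp(\rho_j/\rho_j')^4$, and substituting $\alpha_3^+$ here would assert a false bound. Your arithmetic is also reversed: if $\alpha_3^+(\cdot)^2$ were the correct weight, it would be far more than strong enough and no $\epsilon$ would be needed. The real obstacle is the opposite, namely that $h^{r_0}(A_j)^2$ is only \emph{slightly} smaller than $\alpha_4$, and Lemma~\ref{lemmerigolo} is not a patch on a deficient $\alpha_3^+$ estimate but is precisely the new percolation estimate producing the polynomial gain $h^{r_0}(A_j)^2\leq C\,\alpha_4(\rho_j,\rho_j')\,(\rho_j'/\rho_j)^{-\epsilon}$ over the trivial $\alpha_4$ bound; this is the sole source of $\epsilon>0$.

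A second gap: Lemma~\ref{generalizedannulusstructuresinequality} is not a bookkeeping modification of Lemma~4.8 of~\cite{GPS}. It requires a genuinely new spectral input --- Lemma~\ref{JP} on jointly pivotal sets, proved in Appendix~\ref{a.B} by an iterated $L^2$-projection argument generalising Lemma~2.2 of~\cite{GPS} --- in order to carry the conditioning on $\mathcal{F}_{W^c}$, with $W^c$ containing $\mathcal{I}_R\cap(-r_0,r_0)^2$, through the $L^2$ bound. It is exactly this step that makes the conditional four-arm weight $h^{r_0}(A_j)^2$ appear. Once both corrections are made, your combinatorial outline (decorating the centered annulus structure with extra annuli on $\partial[-r_0,r_0]^2$, summing over dyadic scales, absorbing the count into $g(k)$) does match Paragraphs~B and~C of the paper's proof, although one must also impose the technical conditions~\eqref{technicalepsilon1}--\eqref{technicalepsilon2} on $\epsilon$ so that the $r_0$-boundary terms dominate the interior terms throughout the induction.
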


\begin{proof} We will prove the equivalent inequality:
\begin{equation}\label{whatwewantwithQ}
\widehat{\Q}_{f_R} \left[ |S_r^{r_0}| = k \right] \leq g(k) \, \frac{\alpha_1(R)^2}{\alpha_1(r_0)} \left( \frac{r_0}{r} \right)^{1-\epsilon} \alpha_4(r,r_0) \, .
\end{equation}

\paragraph{A. The $r_0$-decorated centered annulus structures.} 
As in Section~$4$ of~\cite{GPS}, we begin with some definitions concerning annulus structures. More precisely, we first state the definition of centered annulus structures from Section~$4$ of~\cite{GPS}, and we recall the main preliminary result of~\cite{GPS} about these objects (see~\eqref{annulusstrcuturesinequality}). We then explain how to construct annulus structures more suitable for our work: the $r_0$-decorated centered annulus structures, that will be helpful when we want to take into account what happens near the boundary of the square $(-r_0,r_0)^2$. The proof of the result analogous to~\eqref{annulusstrcuturesinequality} (see Lemma~\ref{generalizedannulusstructuresinequality}) will be a little more difficult than the proof (in~\cite{GPS}) of~\eqref{annulusstrcuturesinequality}, and we will need to rely on a general property of spectral samples: Lemma~\ref{JP}. The proof of Lemma~\ref{JP}, based on ideas that come from Section~$2$ of~\cite{GPS}, is postponed to Appendix~\ref{a.B}.

\begin{defi}[Section $4$ of~\cite{GPS}]\label{defiannulussctruc}
Consider $(\mathcal{A},r_\mathcal{A})$ where $r_\mathcal{A} \in [0,R]$ and $\mathcal{A}$ is a collection of mutually percolation disjoint square annuli $A$ that satisfy:
\begin{enumerate}
\item either $A$ is included in $[-R,R]^2$ and is centered at $0$. Such $A$ are called \textbf{centered annuli},
\item or $A$  is included in $[-R,R]^2$ and the outer square of $A$ does not contain $0$. Such $A$ are called \textbf{interior annuli},
\item or $A$ is centered at a point of a side of $[-R,R]^2$ that is at distance at least the outer radius of $A$ from the other sides. Such $A$ are called \textbf{side annuli},
\item or $A$ is centered at a corner of $[-R,R]^2$ and the outer radius of $A$ is less than or equal to $R$. Such $A$ are called \textbf{corner annuli}. (Distinguishing between corner and interior annuli was interesting in~\cite{GPS} for the study of the indicator function of the crossing of the square $g_n$. In our case - where we are only interesting in $f_R$ - it will not be very useful but we have kept this distinction since: \textit{(a)} it does not add any technical difficulty and \textit{(b)} it will make it easier when refering to~\cite{GPS}.)
\end{enumerate}

Suppose also that the annuli and $[-r_\mathcal{A},r_\mathcal{A}]^2$ are percolation disjoint. Then, $(\mathcal{A},r_\mathcal{A})$ is called a \textbf{centered annulus structure}. For each $A \in \mathcal{A}$ we write $h(A)$ for the probability of having the $1$-arm event in $A$ if $A$ is a centered annulus, the $4$-arm event in $A$ if it is an interior annulus, the half-plane $3$-arm event (in $A$ intersected with $[-R,R]^2$) if it is a side annulus and the quarter-plane $3$-arm event (in $A$ intersected with $[-R,R]^2$) if it is a corner annulus. We will often write $\mathcal{A}$ instead of $(\mathcal{A},r_\mathcal{A})$. Finally, a subset $S\subseteq \mathcal{I}_R$ is called \textbf{compatible} with $\mathcal{A}$ if \textit{(a)} for each non-centered annulus $A \in \mathcal{A}$ there exists $i \in S \cap B(A)$ where $B(A)$ is the inner square of $A$ (more precisely, there exists $i \in S$ whose tile is included in $B(A)$) and \textit{(b)} there is no $i \in S$ whose tile intersects $\bigcup_{A \in \mathcal{A}} A$ (see Figure~\ref{centredAS}). (In~\cite{GPS} it is also asked that $S$ intersects $B(A)$ when $A$ is a centered annulus. That is the reason why the estimate~\eqref{annulusstrcuturesinequality} below is written with $S$ whereas the same result in~\cite{GPS} is written with $S \cup \lbrace 0 \rbrace$.)
\end{defi}

\begin{figure}[h!]
\centering
\includegraphics[scale=0.58]{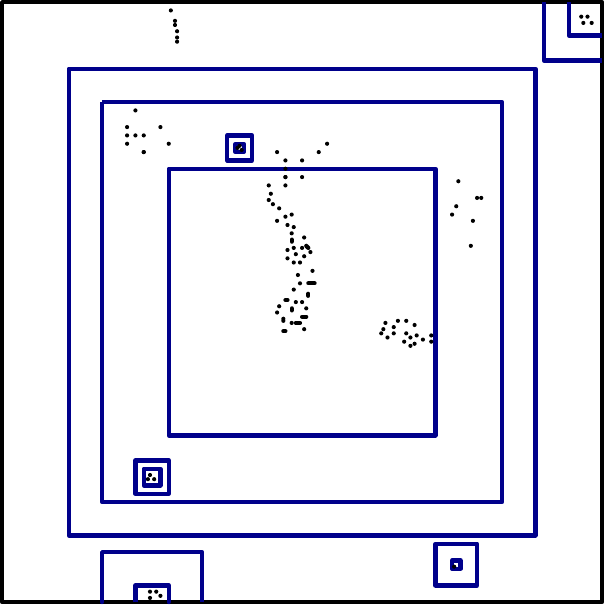}
\caption{A set $S$ compatible with a centered annulus structure.\label{centredAS}}
\end{figure}

The following is Lemma~$4.8$ in \cite{GPS}: if $\mathcal{A}$ is a centered annulus structure, then:
\begin{equation}\label{annulusstrcuturesinequality}
\widehat{\Q}_{f_R} \left[ S \text{ is compatible with } \mathcal{A} \right] \leq \alpha_1(r_\mathcal{A}) \prod_{A \in \mathcal{A}} h(A)^2 \, .
\end{equation}
We want to generalise this by adding some annuli centered on the boundary of $[-r_0,r_0]^2$. Consider $\mathcal{A}$ a centered annulus structure such that $r_\mathcal{A}=r_0$. Let $n \in \N$ and let $A_1, \cdots, A_n$ be some mutually percolation disjoint interior annuli which are percolation disjoint from all the annuli of $\mathcal{A}$. We also assume that, for all $j \in \lbrace 1, \cdots, n \rbrace$, $A_j$ is centered at a point of $\partial [-r_0,r_0]^2$ and is percolation disjoint from $[-r_0/2,r_0/2]^2$. Let $\widetilde{\mathcal{A}} = \mathcal{A} \cup \lbrace A_1, \cdots, A_n \rbrace$. We call such a set of annuli a \textbf{$r_0$-decorated centered annulus structure}.

We say that a subset $S\subseteq \mathcal{I}_R$ is compatible with $\widetilde{\mathcal{A}}$ if \textit{(a)} $S$ is compatible with $\mathcal{A}$, \textit{(b)} for all $j \in \lbrace 1, \cdots, n \rbrace$, there exists $i \in S$ such that the tile of $i$ is included in the inner square of $A_j$, and \textit{(c)} there is no $i \in S \setminus (-r_0,r_0)^2$ whose tile intersects $A_j$. Note that there may exist $i \in S \cap (-r_0,r_0)^2$ whose tile intersects $A_j$, see Figure~\ref{r0centredAS}.

In order to generalize~\eqref{annulusstrcuturesinequality} to these $r_0$-decorated centered annulus structures, we need to introduce a new notation. First, if $B \subseteq \mathcal{I}_R$, we write $\mathcal{F}_B$ for the $\sigma$-field of subsets of $\lbrace -1,1 \rbrace^{\mathcal{I}_R}$ generated by the restriction of $\omega$ to the bits in $B$. Next, for any $j \in \lbrace 1, \cdots, n \rbrace$, we write $h^{r_0}(A_j)$ for the non-negative real number such that:
\[
h^{r_0}(A_j)^2 = \E_{1/2} \left[ \Pro_{1/2} \left[ 4\text{-arm event in } A_j \cond \mathcal{F}_{\mathcal{I}_{R} \cap (-r_0,r_0)^2} \right]^2 \right] \, .
\]

\begin{figure}[h!]
\centering
\includegraphics[scale=0.58]{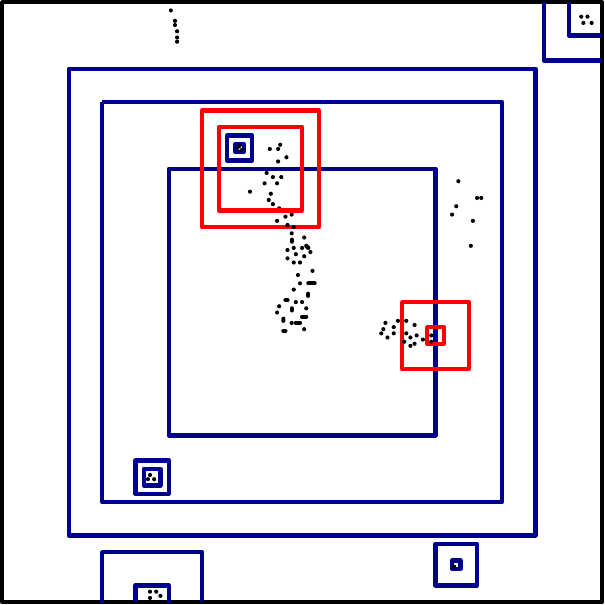}
\caption{A set $S$ compatible with a $r_0$-decorated centered annulus structure.\label{r0centredAS}}
\end{figure}

We now state the following analogue of~\eqref{annulusstrcuturesinequality}, whose proof is \textbf{one of the main steps in the present subsection that differs from \cite{GPS}} (and the main part of its proof is postponed to Appendix \ref{a.B}).
\begin{lem}\label{generalizedannulusstructuresinequality}
Let $\mathcal{A}$ and $\widetilde{\mathcal{A}}$ be as above. We have:
\[
\widehat{\Q}_{f_R} \left[ S \text{ is compatible with } \widetilde{\mathcal{A}} \right] \leq \alpha_1(r_0/2) \prod_{j = 1}^n (4 \, h^{r_0}(A_j)^2 ) \prod_{A \in \mathcal{A}} (4 \, h(A)^2 ) \, .
\]
\end{lem}
In order to prove this lemma, we first need a general property about the spectral sample. In order to state this property, we need the following definition:

\begin{defi}\label{jointlypivotal}
Let $h : \Omega_R = \lbrace -1,1 \rbrace^{\mathcal{I}_R} \rightarrow \R$. Also, let $J \subseteq \mathcal{I}_R$, and let $J_1, \cdots, J_n$ be mutually disjoint subsets of $\mathcal{I}_R$. We say that $J$ is \textbf{pivotal} for $h$ and some configuration $\omega \in \Omega_R$ if changing the values of the sites/edges in $J$ can change the value of $h$. We say that $J_1, \cdots, J_n$ are \textbf{jointly pivotal} for $h$ and some configuration $\omega$ if, for every $j_0 \in \lbrace 1, \cdots, n \rbrace$, there is a choice of configuration in $\cup_{j \neq j_0} J_j$ making $J_{j_0}$ pivotal. We will use the following notation:
\[
(\text{Jointly Pivotal})_{J_1, \cdots, J_n}(h) = JP_{J_1, \cdots, J_n}(h)=\left\lbrace J_1, \cdots, J_n \text{ are jointly pivotal for } h \right\rbrace \, .
\]
Note that $JP_{J_1, \cdots, J_n}(h)$ is an event measurable with respect to the configuration outside $\cup_j J_j$.
\end{defi}

The proof of the following lemma is postponed to Appendix~\ref{a.B}.

\begin{lem}\label{JP}
Let $h$ be as above. Let $n \in \N_+$ and let $J_1, \cdots, J_n, \, W$ be mutually disjoint subsets of $\mathcal{I}_R$. Then:
\[
\widehat{\Q}_h \left[ \forall j, S \cap J_j \neq \emptyset \, , \,S \cap W = \emptyset \right]  \leq 4^n \parallel h \parallel_\infty^2 \E_{1/2} \left[ \Pro_{1/2} \left[ JP_{J_1, \cdots, J_n}(h) \cond \mathcal{F}_{W^c} \right]^2 \right],
\]
where, for every $B \subseteq \mathcal{I}_R$, $\mathcal{F}_{B}$ is the $\sigma$-field of subsets of $\lbrace -1,1 \rbrace^{\mathcal{I}_R}$ generated by the restriction of $\omega$ to the sites/edges in $B$.
\end{lem}

\begin{proof}[Proof of Lemma~\ref{generalizedannulusstructuresinequality}]
If there are only centered annuli in $\widetilde{\mathcal{A}}$ (and more generally if $n=0$) then this is a direct consequence of~\eqref{annulusstrcuturesinequality} (and we obtain the result without the factors $4$; see also the end of the proof for another approach). Hence, we can assume that there exist non-centered annuli in $\widetilde{\mathcal{A}}$. Also, we write $W$ for the set of the $i \in \mathcal{I}_R$ whose tile intersects some $A \in \mathcal{A}$ and of the $i \in \mathcal{I}_R \setminus (-r_0,r_0)^2$ whose tile intersects some $A_j$, $j \in \lbrace 1,  \cdots, n \rbrace$. If $A$ is some annulus, let $B(A)$ be the set of the $i \in \mathcal{I}_R$ such that the tile of $i$ is included in the inner square of $A$. Also, let $\widetilde{\mathcal{A}}'$ be the subset of $\widetilde{\mathcal{A}}$ whose elements are the non-centered annuli $A \in \widetilde{\mathcal{A}}$ such that the inner square of $A$ does not contain any other annulus of $\widetilde{\mathcal{A}}$. We have:
\[
\widehat{\Q}_{f_R} \left[ S \text{ is compatible with } \widetilde{\mathcal{A}} \right] = \widehat{\Q}_{f_R} \left[ \forall A \in \widetilde{\mathcal{A}}', \, S \cap B(A) \neq \emptyset \, , \, S \cap W = \emptyset \right] \, .
\]
We now use Lemma~\ref{JP} (which is the main step in this proof). It implies that the above is at most:
\[
4^{|\widetilde{\mathcal{A}}'|} \E_{1/2} \left[ \Pro_{1/2} \left[ JP_{B(A)_{A \in \widetilde{\mathcal{A}}'}} ( f_R ) \cond \mathcal{F}_{W^c} \right]^2 \right] \leq 4^{|\widetilde{\mathcal{A}}|} \E_{1/2} \left[ \Pro_{1/2} \left[ JP_{B(A)_{A \in \widetilde{\mathcal{A}}'}} ( f_R ) \cond \mathcal{F}_{W^c} \right]^2 \right] \, .
\]
Since our annuli are mutually percolation disjoint and are percolation disjoint from the square  $[-r_0/2,r_0/2]^2$, the event $JP_{B(A)_{A \in \widetilde{\mathcal{A}}'}}(f_R)$ implies the $4$-arm event in every interior annulus $A \in \mathcal{A}$ and in $A_j$ for all $j \in \lbrace 1, \cdots, n \rbrace$. Moreover, it implies the $3$-arm event in $A$ intersected with $[-R,R]^2$ for every side or corner annulus $A \in \mathcal{A}$, the $1$-arm event in any centered annulus $A \in \mathcal{A}$ and the event $\lbrace 0 \leftrightarrow r_0/2 \rbrace$. For any interior annulus $A \in \mathcal{A}$ we have:
\[
\E_{1/2} \left[ \Pro_{1/2} \left[ 4 \text{-arm event in } A \cond \mathcal{F}_{W^c} \right]^2 \right] = h(A)^2 \, ,
\]
since the $4$-arm event in $A$ is independent of the configuration restricted to $W^c$. The analogous equalities hold for side, corner and centered annuli. Similarly, it is not difficult to see that for all $j \in \lbrace 1, \cdots, n \rbrace$ we have:
\[
\E_{1/2} \left[ \Pro_{1/2} \left[ 4 \text{-arm event in } A_j \cond \mathcal{F}_{W^c} \right]^2 \right] = h^{r_0}(A_j)^2 \, .
\]
Finally, note that:
\[
\E_{1/2} \left[ \Pro_{1/2} \left[ 0 \leftrightarrow r_0/2 \cond \mathcal{F}_{W^c} \right]^2 \right] = \Pro_{1/2} \left[ 0 \leftrightarrow r_0/2 \right] = \alpha_1(r_0/2) \, .
\]
By spatial independence, we are done. (We could have treated the case where there are only centered annuli in $\widetilde{\mathcal{A}}$ by very similar ideas but by using~$(2.9)$ from~\cite{GPS} instead of Lemma~\ref{JP}. It is actually easier than the above case.)
\end{proof}

Let $j \in \lbrace 1, \cdots, n \rbrace$ and let $\rho_j$ and $\rho_j'$ be the inner and outer radii of $A_j$. It is not difficult to see that there exists a (upper, lower, left or right, depending on $j$) half-plane $H_j$ such that the center of $A_j$ belongs to the boundary of $H_j$ and $(-r_0,r_0)^2 \subseteq H_j$. Note that, for any $B_1 \subseteq B_2 \subseteq \mathcal{I}_R$ and any function $h : \Omega_R \rightarrow \R$, we have:
\[
\E_{1/2} \left[ \E_{1/2} \left[ h \cond \mathcal{F}_{B_1} \right]^2 \right] \leq \E_{1/2} \left[ \E_{1/2} \left[ h \cond \mathcal{F}_{B_2} \right]^2 \right] \, .
\]
Let $\half_j = H_j \cap \mathcal{I_R}$. The above implies that:
\[
h^{r_0}(A_j)^2 \leq \E_{1/2} \left[ \Pro_{1/2} \left[ 4 \text{-arm event in } A_j \cond \mathcal{F}_{\half_j} \right]^2 \right] \, .
\]
Together with Lemma~\ref{lemmerigolo}, this implies that there exist $\epsilon > 0$ and $C<+\infty$ such that:
\begin{equation}\label{halfplaneinequality}
h^{r_0}(A_j) \leq C \, \alpha_4(\rho_j,\rho_j') \left( \frac{\rho_j'}{\rho_j} \right)^{-\epsilon} \, .
\end{equation}
By possibly decreasing $\epsilon$, we assume the following technical condition (for every $1 \leq \rho_1 \leq \rho_2$):
\begin{equation}\label{technicalepsilon1}
\left( \alpha_4(\rho_1,\rho_2) \, \frac{\rho_2}{\rho_1} \right)^2 \leq \grandO{1} \, \alpha_4(\rho_1,\rho_2) \, \left( \frac{\rho_2}{\rho_1} \right)^{1-\epsilon} \, .
\end{equation}
(This is possible since $\alpha_4(\rho_1,\rho_2)  \frac{\rho_2}{\rho_1}$ is polynomially small in $\frac{\rho_1}{\rho_2}$, see the right-hand inequality of~\eqref{alpha4}.) We also assume the following stronger condition: There exists $c > 0$ such that:
\begin{equation}\label{technicalepsilon2}
\left( \alpha_4(\rho_1,\rho_2) \, \frac{\rho_2}{\rho_1} \right)^2 \leq \frac{1}{c} \left( \alpha_4(\rho_1,\rho_2) \, \left( \frac{\rho_2}{\rho_1} \right)^{1-\epsilon} \right)^{1.01} \left( \frac{\rho_2}{\rho_1} \right)^{-c} \, .
\end{equation}
(This is possible since $\left( \alpha_4(\rho_1,\rho_2) \, \frac{\rho_2}{\rho_1} \right)^{0.99}$ is polynomially small in $\frac{\rho_1}{\rho_2}$. Moreover, this is a stronger condition than~\eqref{technicalepsilon1} since $\alpha_4(\rho_1,\rho_2) \, \left( \frac{\rho_2}{\rho_1} \right)^{1-\epsilon}$ is polynomially small in $\frac{\rho_1}{\rho_2}$.) See Remark~\ref{theexplanation} below where we explain the reason why we need~\eqref{technicalepsilon1} and~\eqref{technicalepsilon2}.

We now write $h(\widetilde{\mathcal{A}})=\prod_{A \in \mathcal{A}} h(A) \prod_{j \in \lbrace 1, \cdots, n \rbrace} h^{r_0}(A_j)$. More generally, for any $\widetilde{\mathcal{A}}' \subseteq \widetilde{\mathcal{A}}$, we write $h(\widetilde{\mathcal{A}}')$ for the obvious analogue where we only consider the annuli in $\widetilde{\mathcal{A}}'$.

Let us fix $1 \leq r \leq r_0 \leq R/2$ and $k \in \N_+$ for the rest of the proof. 
Recall that for any $\theta<+\infty$, we defined in the statement of Proposition \ref{smallspectrumbis} $g(k):= 2^{\theta \log_2^2(k+2)}$. Thanks to Lemma~\ref{generalizedannulusstructuresinequality}, we have the following:
\begin{lem}\label{l.criterion_annulus_struc}
To prove Proposition~\ref{smallspectrumbis}, it sufficient to show that there exists an absolute constant $\theta < +\infty$ such that, if $g(k) \frac{\alpha_1(R)}{\alpha_1(r_0)} \left( \frac{r_0}{r} \right)^{1-\epsilon} \alpha_4(r,r_0) \leq 1$ (where $\epsilon$ is as in inequalities~\eqref{halfplaneinequality},~\eqref{technicalepsilon1} and~\eqref{technicalepsilon2}), then there exists a set $\mathfrak{U}_k$ of $r_0$-decorated centered annulus structures such that: \textit{(a)} for all $S$ that satisfies $|S_r^{r_0}| = k$, there exists $\widetilde{\mathcal{A}} \in \mathfrak{U}_k$ compatible with $S$, and \textit{(b)}:
\begin{equation}\label{lastlast}
\sum_{\widetilde{\mathcal{A}} \in \mathfrak{U}_k} 4^{|\widetilde{\mathcal{A}}|} \, \alpha_1(r_0/2) \, h(\widetilde{\mathcal{A}})^2 \leq g(k) \frac{\alpha_1(R)^2}{\alpha_1(r_0)} \left( \frac{r_0}{r} \right)^{1-\epsilon} \alpha_4(r,r_0) \, .
\end{equation}
(If $g(k) \frac{\alpha_1(R)}{\alpha_1(r_0)} \left( \frac{r_0}{r} \right)^{1-\epsilon} \alpha_4(r,r_0) > 1$ then~\eqref{whatwewantwithQ} is trivial since $\alpha_1(R)$ is the total mass of $\widehat{\Q}_{f_R}$.)
\end{lem}

\begin{rem}\label{theexplanation}
The reason why we need conditions~\eqref{technicalepsilon1} and~\eqref{technicalepsilon2} on $\epsilon$ can be explained as follows: in the next paragraph, we will construct our sets of $r_0$-decorated centered annulus structures $\mathfrak{U}_k$. Then, we will estimate the quantities $\sum_{\widetilde{\mathcal{A}} \in \mathfrak{U}_k} 4^{|\widetilde{\mathcal{A}}|} \, \alpha_1(r_0/2) \, h(\widetilde{\mathcal{A}})^2$ of~\eqref{lastlast} by induction. At each step of the induction, the annuli centered on the boundary of $[-r_0,r_0]^2$ will induce estimates of the form $\left( \frac{r_0}{r} \right)^{1-\epsilon} \alpha_4(r,r_0)$ and the other interior annuli will induce estimates of the form $\left( \frac{r_0}{r} \right)^2 \alpha_4(r,r_0)^2$.
Since we will deal a lot with such terms, it will be useful to know which of them is the dominant term, and that is why we assume in~\eqref{technicalepsilon1} that $\epsilon$ is sufficiently small so that $\left( \frac{r_0}{r} \, \alpha_4(r,r_0) \right)^2 \leq \grandO{1} \alpha_4(r,r_0) \left( \frac{r_0}{r} \right)^{1-\epsilon}$.

The reason why we need the existence of the exponents $c$ (even very small) and $1.01$ in the stronger assumption~\eqref{technicalepsilon2} is that, at each step of the induction, we want to have some room to manoeuvre. Actually, we could have chosen any number $1+a \in (1,2)$ instead of $1.01$ and the proof would have been exactly the same by only replacing all the exponents $1.01$ by $1+a$ (in particular in the estimate~\eqref{recr_0} below). The reason why we have chosen $1.01$ is only that it is nice to think of $a$ as being very small so that we can have precise estimates about the exponent $\epsilon$ that we are able to consider. See Remark~\ref{aquantitaiveepsilon} for more about this.

At some point of the proof, it will probably be more natural to work with an exponent $1+a$ close to $2$ instead of $1.01$ (in the same spirit as the exponent $1.99$ that appears in Section~$4$ of~\cite{GPS}) since the exponent will be extracted from a geometric sum of the form $\sum_{d=2}^{k'} \gamma^d$ (see for instance the proof of~\eqref{recr_0} below). With the above explanations, we hope that the fact that we have chosen $1+a=1.01$ will not confuse the reader.
\end{rem}

\medskip

We now proceed to the construction of the sets $\mathfrak{U}_k$.

\paragraph{B. The construction of the $r_0$-decorated centered annulus structures.} Contrary to Paragraph~A, the novelty of this paragraph in comparison to~\cite{GPS} is only that we extend some definitions to what happens near the boundary of the box $(-r_0,r_0)^2$. Still, this paragraph is crucial to define carefully the sets $\mathfrak{U}_k$ (in particular, we will specify how we associate an annulus to the singleton $\lbrace \lbrace 0 \rbrace \rbrace$ and how we define the quantities $\gamma^{r_0}_{\rho_1}(\rho_2)$).
\medskip

In Section~$4$ of \cite{GPS}, the authors explain how we can classify the annulus structures. We will follow the same ideas to classify our $r_0$-decorated centered annulus structures. Let $S \subseteq \mathcal{I}_R$ be such that $|S_r^{r_0}| = k$. Let $j \in \N$. If $j \geq 1$, we define $G_j$ as the graph with vertices the elements of $S_r^{r_0} \cup \lbrace 0 \rbrace$ and with edges present between any two points with Euclidean distance from one to the other at most $2^jr$ (say for instance that the distance between two sets is the infimum distance between these two sets - the fact that the vertices are squares except $\lbrace 0 \rbrace$ that is a point will not be a problem). For the case $j=0$, $G_j$ is simply the graph with vertices the elements of $S_r^{r_0} \cup \lbrace 0 \rbrace$ and with no edge. The authors of \cite{GPS} explain how to construct annuli around the connected components of the $G_j$'s. Let us explain it (the difference will be that we will need to change the definition for annuli close to $\partial[-r_0,r_0]^2$):
\medskip

Let $\overline{j}=\lfloor \log_2 \left( \frac{R}{kr} \right) \rfloor-5$ and $J=\lbrace 0, \cdots, \overline{j} \rbrace$. Take $j \in \lbrace 1, \cdots, \overline{j} \rbrace$. A connected component of $G_j$ is called an \textbf{interior cluster at level $j$} if it does not contain $\lbrace 0 \rbrace$, it is not a connected component of $G_{j-1}$ and its distance to $\partial[-r_0,r_0]^2 \cup \partial [-R,R]^2$ is larger than $2^jr$. A connected component of $G_j$ is a \textbf{centered cluster at level $j$} if it contains $\lbrace 0 \rbrace$ and it is not a connected component of $G_{j-1}$. We define by induction on $j \in \lbrace 1, \cdots, \overline{j} \rbrace$ the other clusters: a connected component of $G_j$ is a \textbf{side cluster at level $j$} if it is within distance $2^jr$ of precisely one of the boundary edges of $[-R,R]^2$ and it is not a side cluster at level $j'$ for any $j' \in \lbrace 1, \cdots, j-1 \rbrace$. A connected component of $G_j$ is a \textbf{corner cluster at level $j$} if it is within distance $2^jr$ of precisely two adjacent boundary edges of $[-R,R]^2$ and it is not a corner cluster at level $j'$ for any $j' \in \lbrace 1, \cdots, j-1 \rbrace$. A connected component of $G_j$ is a \textbf{$r_0$-cluster at level $j$} if it does not contain $\lbrace 0 \rbrace$, it is within distance $2^jr$ of $\partial [-r_0,r_0]^2$ and it is not a $r_0$-cluster at level $j'$ for any $j' \in \lbrace 1, \cdots, j-1 \rbrace$. Furthermore, a connected component of $G_0$ (i.e. a singleton) that is not $\lbrace \lbrace 0 \rbrace \rbrace$ is an interior cluster at level $0$ and the singleton $\lbrace \lbrace 0 \rbrace \rbrace$ is a centered cluster at level $0$. Finally, we define a unique cluster at level $\overline{j}+1$ that is the entire set $S_r^{r_0} \cup \lbrace 0 \rbrace$ and is called the \textbf{top cluster}.

With these definitions, for any $j \in J$ and any connected component $C$ of $G_j$, there exists $j' \in J$ such that $C$ is a cluster at level $j'$ of one of the types described above. Moreover, for any type (i.e. interior, centered, side, corner, $r_0$- or top) of cluster there exists at most one level $j' \in J$ such that $C$ is a cluster at level $j'$ of this type, and for any level $j' \in J$ there exists at most one type of cluster such that $C$ is a cluster at level $j'$ of this type.

For a cluster $C$ of any type, we write $j(C)$ for the level of $C$.\\

We want to define a tree structure for our clusters. Let $C$ be a cluster of some type at some level $j \in J$. The parent of $C$ is either $C$ itself if $C$ is also a cluster of some other type at some level $j' > j$ (and we choose the smallest level $j'$ if there are more than one choice) or the smallest cluster that properly contains $C$ otherwise (and we also choose the smallest level $j' > j$). We write $C^p$ for the parent of $C$. For instance, the children of a $r_0$-cluster can only be interior and $r_0$-clusters; moreover, a $r_0$-cluster at level $j>0$ either has a single child that is an interior cluster or has at least two children.

Now, for any of the clusters $C$ described above (except for the top cluster), we define an annulus $A_C$. The inner radius of this annulus will be $2^{j(C)+4}|C|$ and the outer radius will be $2^{j(C^p)-4}$. The center will be $0$ if $C$ is a centered cluster and the corner associated to $C$ if $C$ is a corner cluster. In the other cases, we use some deterministic law to choose a vertex $v=v(C)$ of $C$ and we choose the center of the annulus as follows: If $C$ is an interior cluster, we decide that the center of $A_C$ is the (or one of the) nearest point(s) of $v$ whose coordinates are divided by $2^{j(C)}r$. If $C$ is a side cluster, the center of $A_C$ is the (or one of the) nearest point(s) of $v$ that is on $\partial[-R,R]^2$ and whose coordinate that is not $R$ is divided by $2^{j(C)}r$. We do exactly the same thing with $r_0$-clusters but now \textbf{we center the annulus on $\partial[-r_0,r_0]^2$}. (When the outer radius is larger than the inner radius, $A_C$ is the empty annulus.)

There is only \textbf{one exception}: if $C$ is the singleton $\lbrace \lbrace 0 \rbrace \rbrace$, we decide that the inner radius is $2^4r \vee (r_0+2)$ instead of $2^4r$ (and the outer radius is still $2^{j(C^p)-4}$).
\medskip

All these annuli define a $r_0$-decorated centered annulus structure $\widetilde{\mathcal{A}}_1(S)$ compatible with $S$ (to see this, write $A_1, \cdots, A_n$ for the annuli associated to the $r_0$-clusters). Let us for instance check that the annuli associated to the $r_0$-clusters are percolation disjoint from $[-r_0/2,r_0/2]^2$. Let $C$ be a $r_0$-cluster at level $j$. Some vertex of $C$ is at distance less than or equal to $2^jr$ of $\partial[-r_0,r_0]^2$ and $\lbrace 0 \rbrace \notin C$, so $2^{j(C^p)}r \leq 2(\sqrt{2}r_0 + 2^jr)$ i.e. $2^{j(C^p)-4}r \leq r_0/2^{2.5}+2^{j-3}r$. Therefore, if the annulus associated to $C$ is not empty, then $2^{j+4}r \leq 2^{j+4}r|C| \leq 2^{j(C^p)-4}r \leq r_0/2^{2.5}+2^{j-3}r$ so $2^{j-3}r \leq r_0/2^{8.5}$ and the outer radius is $2^{j(C^p)-4} < r_0/2^{2.5}+ r_0/2^{8.5} \leq r_0/4$. So, if $r_0$ is sufficiently large (for instance if $r_0 \geq 8$) then the annulus is percolation disjoint from $[-r_0/2,r_0/2]^2$. If $r_0 < 8$, it is not difficult to see (with very similar arguments) that any annulus associated to a $r_0$-cluster is empty.

For the other conditions that we have to check to prove that $\widetilde{\mathcal{A}}_1(S)$ is a $r_0$-decorated centered annulus structure compatible with $S$, we use similar arguments (see also Section~$4$ of \cite{GPS} where the authors explain some similar results).
\medskip

Actually, since we have defined these sets of annuli for every $S$ such that $|S_r^{r_0}| = k$, we have defined too many different $r_0$-decorated centered annulus structures and that would make the sum in \eqref{lastlast} much bigger than we would like (see Section~$4$ of~\cite{GPS} for more about such a problem). So, we need a few other definitions. Consider four positive real numbers $+\infty > \theta > \theta^* > \theta^{r_0} > \theta' > 1$ that we will determine later. We define $g'$, $g^{r_0}$ and $g^*$ like $g$ but with $\theta'$, $\theta^{r_0}$ and $\theta^*$ instead of $\theta$. We also define:
\[
\gamma_{\rho_1}(\rho_2) = \left( \frac{\rho_2}{\rho_1} \alpha_4(\rho_1,\rho_2) \right)^2 \, ,
\]
\[
\gamma^{r_0}_{\rho_1}(\rho_2) = \left( \frac{\rho_2}{\rho_1} \right)^{1-\epsilon} \alpha_4(\rho_1,\rho_2)
\]
(where $\epsilon$ is the constant in~\eqref{halfplaneinequality},~\eqref{technicalepsilon1}, and~\eqref{technicalepsilon2}),
\[
\gamma^*(\rho_1,\rho_2) = \alpha_1(\rho_1,\rho_2)^2 \, ,
\]
and $\overline{\gamma}_{\rho_1}(\rho_2) = \inf_{\rho' \in [1,\rho_2]} \gamma_{\rho_1}(\rho')$, and $\overline{\gamma}^{r_0}_{\rho_1}(\rho_2) = \inf_{\rho' \in [1,\rho_2]} \gamma^{r_0}_{\rho_1}(\rho')$. Note that:
\begin{equation}\label{sameorder}
\overline{\gamma}_{\rho_1}(\rho_2) \asymp \gamma_{\rho_1}(\rho_2) \, ,
\end{equation}
and similarly for $\overline{\gamma}^{r_0}$. (The quantities $\overline{\gamma}$ and $\overline{\gamma}^{r_0}$ are defined in order to work with decreasing functions in $\rho_2$, note that $\gamma^*$ is already decreasing in $\rho_2$).

Now, if $C$ is an interior, side or corner cluster then we say that $C$ is \textbf{overcrowded} if we have $g'(|C|) \overline{\gamma}_r(2^{j(C)}r) > 1$. If $C$ is a $r_0$-cluster then we say that $C$ is overcrowded if $g^{r_0}(|C|) \overline{\gamma}_r^{r_0}(2^{j(C)}r) > 1$.  Finally, if $C$ is centered then we say that $C$ is overcrowded if $g^*(|C|) \gamma^*_{r_0}(2^{j(C)}r) \, \overline{\gamma}^{r_0}_r(2^{j(C)}r) > 1$. Note that all clusters at level $0$ are overcrowded. We define a $r_0$-decorated centered annulus structure $\widetilde{\mathcal{A}}(S)$ by \textbf{removing from $\widetilde{\mathcal{A}}_1(S)$ every annulus that corresponds to a proper descendent of an overcrowded cluster}. The $r_0$-decorated centered annulus structure $\widetilde{\mathcal{A}}(S)$ is still compatible with $S$ and we can define:
\[
\mathfrak{U}_k = \left\lbrace \widetilde{\mathcal{A}}(S) : S \subseteq \mathcal{I}_R \text{ such that } |S_r^{r_0}| = k \right\rbrace \, .
\]

Note that from the definition of the $r_0$-decorated centered annulus structures and from the construction above, we have the following:
Let $S \subseteq \mathcal{I}_R$ be such that $|S_r^{r_0}|=k$. Let $A_1, \cdots, A_n$ be the annuli associated to the $r_0$-clusters of $S$ and that have not been removed from $\widetilde{\mathcal{A}}_1(S)$. Also, let $\mathcal{A}(S) = \widetilde{\mathcal{A}}(S) \setminus \lbrace A_1, \cdots, A_n \rbrace$. Then:

\[
h(\widetilde{\mathcal{A}}(S))^2 = \prod_{j=1}^n h^{r_0}(A_j)^2 \prod_{A \in \mathcal{A}(S)} h(A)^2 \, .
\]

\paragraph{C. Summations on the annulus structures.} This paragraph is analogous to the most technical parts of Section~$4$ of~\cite{GPS}. The calculations are of the same flavour as in~\cite{GPS}, but they require to deal with new quantities: those related to the $r_0$-clusters. In particular, we will have to deal with the exponent $\epsilon$ of~\eqref{halfplaneinequality} (and~\eqref{technicalepsilon1},~\eqref{technicalepsilon2}).

\paragraph{C.1. Some estimates proved inductively.} The strategy is to prove some estimates inductively and then conclude thanks to these estimates. Remember that we want to prove \eqref{lastlast}. We need a few last notations. Remember that we have fixed $r$, $R$ and $k$. Let $S \subseteq \mathcal{I}_R$ be such that $|S^{r_0}_r|=k$ and let $C$ be a cluster of $S^{r_0}_r \cup \lbrace 0 \rbrace$ (of any level and any type). We write $\widetilde{\mathcal{A}}'(C)$ for the subset of $\widetilde{\mathcal{A}}(S)$ that corresponds to the proper descendants of $C$.

Take $k' \in \N_+$ and $j \in J$. Let $B$ be a square such that there exists a set $S \subseteq \mathcal{I}_R$ with $|S^{r_0}_r|=k$ and an interior cluster $C$ of $S^{r_0}_r \cup \lbrace 0 \rbrace$ such that: \textit{(a)} the level of $C$ is $j$, \textit{(b)} $|C|=k'$ and \textit{(c)} $B$ is the inner square of the annulus associated to $C$. We also ask that the annulus $A_C$ has not been removed from $\widetilde{\mathcal{A}}_1(S)$ - i.e. we ask that $C$ is not a proper descendant of an overcrowded cluster. We define:
\[
\mathfrak{U}^{int}(B,k',j) = \left\lbrace \widetilde{\mathcal{A}}'(C) : C \text{ as above} \right\rbrace
\]
(note that $\widetilde{\mathcal{A}}'(C)$ does not depend on the choice of the set $S$ such that $C$ is a cluster of $S^{r_0}_r \cup \lbrace 0 \rbrace$) and:
\[
H^{int}(j,k') =  \sup_{B \text{ as above}} \sum_{ \widetilde{\mathcal{A}}' \in \mathfrak{U}^{int}(B,k',j)} 4^{|\widetilde{\mathcal{A}}'|} \, h(\widetilde{\mathcal{A}}')^2 \, .
\]
We do exactly the same thing for centered, side, corner and $r_0$-clusters and define respectively $H^*(j,k')$, $H^+(j,k')$, $H^{++}(j,k')$ and $H^{r_0}(j,k')$ (if there is no such $B$, then the supremum is $0$).
\medskip

We want to show by induction on $j$ that, if $\theta'$, $\theta^{r_0}/\theta'$ and $\theta^*/\theta^{r_0}$ are sufficiently large, then the following inequalities hold for any $j \in J$ and $k' \in \N_+$:
\begin{align}
& \forall \text{ symbol } \natural \in \lbrace int, +, ++ \rbrace, H^\natural(j,k') \leq g'(k') \, \overline{\gamma}_r(2^jr) \label{recintetc} \, ,\\
& H^{r_0}(j,k') \leq \left( g^{r_0}(k') \, \overline{\gamma}^{r_0}_r(2^jr) \right)^{1.01} \label{recr_0} \, ,\\
& H^*(j,k') \leq g^*(k') \, \gamma^*_{r_0}(2^jr) \, \overline{\gamma}^{r_0}_r(2^jr) \, . \label{reccent}
\end{align}

First, note that, due to the definition of overcrowded clusters, if $j \leq J'(k'):= \max \lbrace j \in \N : g'(k') \overline{\gamma}_r(2^jr) > 1 \rbrace$ then inequalities~\eqref{recintetc} are trivially true, if $j \leq J^{r_0}(k'):= \max \lbrace j \in \N : g^{r_0}(k') \, \overline{\gamma}^{r_0}_r(2^jr) > 1 \rbrace$ then inequality~\eqref{recr_0} is trivially true, and if $j \leq J^*(k'):= \max \lbrace j \in \N : g^*(k') \gamma^*_{r_0}(2^jr) \, \overline{\gamma}^{r_0}_r(2^jr) > 1 \rbrace$ then it is the case for~\eqref{reccent}.

\begin{rem}\label{power}
Assume that inequalities~\eqref{recintetc}, \eqref{recr_0} and~\eqref{reccent} hold. Then, they are still true if we raise the right-hand side to any power in $[0,1]$ (distinguish between the two cases $j \leq J^\natural(k')$ and $j>J^\natural(k')$ for any symbol $\natural \in \lbrace int, +, ++, r_0, * \rbrace$). For instance, we will also use~\eqref{recr_0} with exponent $1$ instead of $1.01$.
\end{rem}
\begin{rem}\label{calculus}
Until the end of proof, we will often use the quasi-multiplicativity property and~\eqref{poly}. We will also use that, for any $j_0$ and any $a>0$:
\[
\sum_{j' \geq j_0} \overline{\gamma}^{r_0}_r (2^{j'}r)^a \asymp \overline{\gamma}^{r_0}_r (2^{j_0}r)^a
\]
(where the constant in $\asymp$ may only depend on $a$). Of course, the analogous properties are also true for $\gamma^*$ and $\overline{\gamma}$.
\end{rem}
The estimates~\eqref{recintetc} are proved in Section~$4$ of \cite{GPS} (actually, in~\cite{GPS} there is not the $4^{|\widetilde{\mathcal{A}}'|}$ term in the definition of the $H$'s but that does not change the calculations since, at each step of the induction, the factors $4$ corresponding to the annuli we add are absorbed in the other $\grandO{1}$ terms). Note that in these estimates neither $r_0$-clusters nor centered clusters play a role since the descendents of interior, side and corner clusters cannot be neither centered nor $r_0$-clusters. The idea of the proof of~\eqref{recr_0} is very similar. Let us prove this result. We proceed by induction on $j$. If $j=0$ and $k' \in \N_+$ then we are done (and more generally if $j \leq J^{r_0}(k')$). We take some $j \in J$ and $k' \in \N_+$ such that $j>J^{r_0}(k')$, we assume that~\eqref{recr_0} is true for every $(k'',j')$ with $k'' \in \N_+$ and $j' \in \lbrace 0, \cdots, j-1 \rbrace$, and we want to prove it for $(k',j)$.

Consider some $B$ as in the definition of $H^{r_0}(j,k')$ (if there is no such $B$ then we are done since in this case $H^{r_0}(j,k')=0$). The square $B$ is the inner square of the annulus associated to some $r_0$-cluster $C$ at level $j$ such that: \textit{(a)} $|C|=k'$ and \textit{(b)} \textbf{$C$ is neither overcrowded nor the proper descendant of an overcrowded cluster}. Let $C_1, \cdots, C_d$ be the children of $C$, let $A_{C_1}, \cdots, A_{C_d}$ be the annuli associated to $C_1, \cdots, C_d$ (that have not been removed by the observation \textit{(b)} above) and let $B_1, \cdots, B_d$ be the inner squares of these annuli. Note that either $d=1$ and $C_1$ is an interior cluster or $d \geq 2$ and the $C_i$'s are either interior or $r_0$-clusters. Moreover, if we know that $C_i$ is an interior (respectively $r_0$-) cluster at level $j_i$, then there are at most $\grandO{1} (k'2^jr/(2^{j_i}r))^2 = \grandO{1} (k'2^{j-j_i})^2$ (respectively $\grandO{1} k'2^jr/(2^{j_i}r) = \grandO{1} k'2^{j-j_i}$) possible choices for $B_i$. Furthermore, if $k_i$ is the cardinal of $C_i$ then the inner radius of $A_{C_i}$ is $k_i2^{j_i+4}r$ and its outer radius is $2^{j-4}r$. Hence, if $C_i$ is an interior cluster, then $h(A_{C_i})^2 = \alpha_4(k_i 2^{j_i+4}r,2^{j-4}r)^2$. Moreover, \eqref{halfplaneinequality} implies that if $C_i$ is a $r_0$-cluster, then we have $h^{r_0}(A_{C_i})^2 \leq \grandO{1} \alpha_4(k_i 2^{j_i+4}r,2^{j-4}r) \, \left( 2^{j-4}r /(k_i2^{j_i+4}r) \right)^{-\epsilon}$.

If we distinguish between the cases $d=1$ and $d \geq 2$, we obtain that:
\begin{align}
& H^{r_0}(j,k') \leq \grandO{1} \sum_{j_1 < j} (k'2^{j-j_1})^2 \, \alpha_4(k'2^{j_1+4}r,2^{j-4}r)^2 \, H^{int}(j_1,k') \label{r_01stsum}\\ 
& + \sum_{d=2}^{k'} \sum_{j_1, \cdots, j_d < j} \sum_{ k_1, \cdots, k_d \in \N_+ : \atop k_1 + \cdots + k_d = k'} \prod_{i=1}^{d} \Bigg( \grandO{1} (k'2^{j-j_i})^2 \, \alpha_4(k_i2^{j_i+4}r,2^{j-4}r)^2 \, H^{int}(j_i,k_i) \label{r_02ndsum} \\
& + \grandO{1} k' \, 2^{j-j_i} \, \alpha_4(k_i2^{j_i+4}r,2^{j-4}r) \, \left( \frac{2^{j-4}r}{k_i2^{j_i+4}r} \right)^{-\epsilon} \, H^{r_0}(j_i,k_i) \Bigg) \, .\label{r_02ndsumbis}
\end{align}

By using~\eqref{recintetc} (with $\natural = int$),~\eqref{sameorder}, the quasi-multiplicativity property, and~\eqref{poly}, we obtain that the first sum of the above inequality (i.e. the right-hand side of~\eqref{r_01stsum}) is at most:
\begin{align}
& \grandO{1} k'^{\grandO{1}} \sum_{j_1 \leq j} \, \left( 2^{j-j_1} \, \alpha_4(2^{j_1}r,2^jr) \right)^2 \, g'(k') \, \bar{\gamma}_r(2^{j_1}r) \nonumber\\
& \leq \grandO{1} k'^{\grandO{1}} \sum_{j_1 \leq j} \frac{\bar{\gamma}_r(2^jr)}{\bar{\gamma}_r(2^{j_1}r)} \, g'(k') \, \bar{\gamma}_r(2^{j_1}r) \nonumber\\
& \leq \grandO{1} k'^{\grandO{1}} g'(k') \, j \, \bar{\gamma}_r(2^jr) \, . \label{e.usingtechnicaleps}
\end{align}
Let $c > 0$ be the constant of~\eqref{technicalepsilon2}. In terms of $\bar{\gamma}$ and ${\bar{\gamma}}^{r_0}$,~\eqref{technicalepsilon2} can be stated as follows:
\[
\bar{\gamma}_r(2^jr) \leq \grandO{1} {\bar{\gamma}}^{r_0}_r(2^jr)^{1.01} 2^{-cj} \, .
\]
Hence,~\eqref{e.usingtechnicaleps} is at most:
\[
\grandO{1} k'^{\grandO{1}} g'(k') \, {\bar{\gamma}}^{r_0}_r(2^jr)^{1.01} \, j \, 2^{-cj}  \leq \grandO{1} k'^{\grandO{1}} g'(k') \, {\bar{\gamma}}^{r_0}_r(2^jr)^{1.01} \, ,
\]
which is smaller than or equal to:
\[
1/2 \left( g^{r_0}(k') \, {\bar{\gamma}^{r_0}}_r(2^jr) \right)^{1.01} \, ,
\]
if $\theta^{r_0}/\theta'$ is sufficiently large.
\medskip

Let us now concentrate on the second sum (i.e. the quantity of lines~\eqref{r_02ndsum} and~\eqref{r_02ndsumbis}). As above, we have:
\begin{equation}\label{termint}
(k'2^{j-j_i})^2 \, \alpha_4(k_i2^{j_i+4}r,2^{j-4}r)^2 \, H^{int}(j_i,k_i) \leq \grandO{1} k'^{\grandO{1}} g'(k_i) \, \bar{\gamma}_r(2^jr)  \, .
\end{equation}
If we use our induction hypothesis on the $H^{r_0}(j_i,k_i)$'s (with an exponent $1$ instead of $1.01$, see Remark~\ref{power}), we obtain that:
\begin{equation}\label{termr_0}
k' \, 2^{j-j_i} \, \alpha_4(k_i2^{j_i+4}r,2^{j-4}r) \, \left( \frac{2^{j-4}r}{k_i2^{j_i+4}r} \right)^{-\epsilon} \, H^{r_0}(j_i,k_i) \leq \grandO{1} k'^{\grandO{1}} g^{r_0}(k_i) {\bar{\gamma}}^{r_0}_r(2^jr) \, .
\end{equation}
The inequality~\eqref{technicalepsilon1} (which implies that $\bar{\gamma}_r(2^jr) \leq \grandO{1} {\bar{\gamma}}^{r_0}_r(2^jr)$) and the fact that $\theta^{r_0} > \theta'$ imply that the right-hand side of~\eqref{termr_0} is at least $\grandO{1} k'^{\grandO{1}}$ times the right-hand side of~\eqref{termint}. In other words, our estimate on the terms that come from the $r_0$-clusters dominate our estimates on the terms that come from the interior clusters. We obtain that the second sum is at most:
\begin{align*}
& \sum_{d=2}^{k'} \sum_{j_1, \cdots, j_d \leq j} \sum_{ k_1, \cdots, k_d \in \N_+: \atop k_1 + \cdots + k_d = k'} \prod_{i=1}^{d} \left( \grandO{1} k'^{\grandO{1}} g^{r_0}(k_i) {\bar{\gamma}}^{r_0}_r(2^jr) \right)\\
& \leq \sum_{d=2}^{k'} \left( \grandO{1} k'^{\grandO{1}} \, j \, {\bar{\gamma}}^{r_0}_r(2^jr) \right)^d  \sum_{ k_1, \cdots, k_d \in \N_+: \atop k_1 + \cdots + k_d = k'} \prod_{i=1}^{d} g^{r_0}(k_i) \, .
\end{align*}
Since $\log_2^2$ is concave and increasing, the above is at most:
\[
\sum_{d=2}^{k'} \left( \grandO{1} k'^{\grandO{1}} \, j \, {\bar{\gamma}}^{r_0}_r(2^jr) \right)^d \, {k'}^d \, g^{r_0}(k'/d)^d \leq \sum_{d=2}^{k'} \left( \grandO{1} k'^{\grandO{1}} \, j \, g^{r_0}(k'/2) \, {\bar{\gamma}}^{r_0}_r(2^jr) \right)^d \, .
\]
We can show that, if $\theta^{r_0}$ is sufficiently large, then the hypothesis $j > J^{r_0}(k)$ implies that:
\begin{equation}\label{petitlemma}
\grandO{1} k'^{\grandO{1}} j \, g^{r_0}(k'/2) \, {\bar{\gamma}}^{r_0}_r(2^jr) \leq 1/2 \, \left( g^{r_0}(k') \, {\bar{\gamma}}^{r_0}_r(2^jr) \right)^{0.505} (\leq 1/2) \, .
\end{equation}
(This is the exact analogue of Lemma~4.4 of~\cite{GPS} - with $\epsilon = 0.495$ - and we refer to this paper for more details.) So, the second sum is smaller than or equal to:
\begin{eqnarray*}
\frac{\left( 1/2 \, \left( g^{r_0}(k') \, {\bar{\gamma}}^{r_0}_r(2^jr) \right)^{0.505}\right)^2}{1-1/2 \, \left( g^{r_0}(k') \, {\bar{\gamma}}^{r_0}_r(2^jr) \right)^{0.505}} & \leq & \frac{(1/2)^2}{1-1/2} \, \left( g^{r_0}(k') \, {\bar{\gamma}}^{r_0}_r(2^jr) \right)^{2 \times 0.505}\\
& = & 1/2 \, \left( g^{r_0}(k') \, {\bar{\gamma}}^{r_0}_r(2^jr) \right)^{1.01}.
\end{eqnarray*}
Finally:
\[
H^{r_0}(j,k') \leq 2 \times 1/2 \, \left( g^{r_0}(k') \, {\bar{\gamma}}^{r_0}_r(2^jr) \right)^{1.01} = \left( g^{r_0}(k') \, {\bar{\gamma}}^{r_0}_r(2^jr) \right)^{1.01}.
\]

\bigskip

Now, let us prove \eqref{reccent}. Let $j \in J$ and $k' \in \N_+$. First note that we can take $h \in (0,1/4)$ such that, for all $1 \leq \rho_1 \leq \rho_2$:
\[
{\overline{\gamma}}^{r_0}_{\rho_1}(\rho_2)^{1-2h} \leq \frac{1}{h} \gamma^*_{\rho_1}(\rho_2)^h \, .
\]

Consider some $B$ as in the definition of $H^*(j,k')$. The square $B$ is the inner square of the annulus associated to some centered cluster $C$ at level $j$ with $|C|=k'$. If $j=0$ and $k' \in \N_+$ then we are done (and more generally if $j \leq J^*(k')$). We assume that $j>J^*(k')$ and we prove the result by induction on $j$. Let $C_1, \cdots, C_d$ be the children of $C$. Note that $d \geq 2$ and that exactly one of the $C_i$'s is centered, say that it is $C_1$.

Remember Remark~\ref{power}: the induction hypothesis implies that the following is true for any $k'' \in \N_+$ and $j' \in \lbrace 0, \cdots, j-1 \rbrace$:
\begin{eqnarray}
H^*(j',k'') & \leq & \left( g^*(k'') \, \gamma^*_{r_0}(2^{j'}r) \, {\overline{\gamma}}^{r_0}_r(2^{j'}r) \right)^{1-h} \nonumber\\
& = & g^*(k'')^{1-h} \, \gamma^*_{r_0}(2^{j'}r)^{1-h} \, {\overline{\gamma}}^{r_0}_r(2^{j'}r)^{1-2h} \, {\overline{\gamma}}^{r_0}_r(2^{j'}r)^{h} \nonumber \\
& \leq & g^*(k'')^{1-h} \, \frac{1}{h} \gamma^*_{r_0}(2^{j'}r) \, {\overline{\gamma}}^{r_0}_r (2^{j'}r)^h \, . \label{reccentavech}
\end{eqnarray}
(In the last line we have used that $r \leq r_0$.) Note that, for any $i \geq 2$, $C_i$ is either an interior cluster or a $r_0$-cluster. As above, thanks to~\eqref{technicalepsilon1}, our estimates on the $r_0$-clusters dominate our estimates on the interior clusters. Let us also recall that the way to associate an annulus to the singleton $\lbrace \lbrace 0 \rbrace \rbrace$ is different from the other clusters, that is why ``$\vee (r_0+2)$" appears in the estimate below. We have:
\begin{align*}
& H^{*}(j,k') \leq \sum_{d=2}^{k'} \sum_{j_1, \cdots, j_d < j} \sum_{ k_1, \cdots, k_d \in \N_+: \atop k_1 + \cdots + k_d = k'} \grandO{1} \, \alpha_1 \left( (k_12^{j_1+4}r ) \vee (r_0+2) ,2^{j-4}r \right)^2 \, H^*(j_1,k_1)\\
& \times \prod_{i=2}^{d} \left( \grandO{1} k' \, 2^{j-j_i} \, \alpha_4(k_i2^{j_i+4}r,2^{j-4}r) \, \left( \frac{2^{j-4}r}{k_i2^{j_i+4}r} \right)^{-\epsilon} \, \left( g^{r_0}(k_i) \, {\overline{\gamma}}^{r_0}_r(2^{j_i}r) \right)^{1.01} \right)
\end{align*}
(the second line of the expression above comes from the fact that the estimates on the $r_0$-clusters dominate our estimates on the interior clusters; the term $\left( g^{r_0}(k_i) \, {\overline{\gamma}}^{r_0}_r(2^{j_i}r) \right)^{1.01}$ comes from~\eqref{recr_0}). We continue the calculation: by using~\eqref{reccentavech} to deal with $H^*(j_1,k_1)$ (and also by using that $\alpha_1 \left( (k_12^{j_1+4}r ) \vee (r_0+2) ,2^{j-4}r \right)^2 \, \gamma^*_{r_0}(2^{j_1}r) \leq \grandO{1} k_1^{\grandO{1}} \gamma^*_{r_0}(2^{j}r)$), we find that the above is at most:
\begin{align*}
& \sum_{d=2}^{k'} \sum_{j_1, \cdots, j_d < j} \sum_{ k_1, \cdots, k_d \in \N_+: \atop k_1 + \cdots + k_d = k'} \grandO{1} k^{\grandO{1}}_1 \, g^*(k_1)^{1-h} \, \frac{1}{h} \, \gamma^*_{r_0}(2^{j}r) \, {\overline{\gamma}}^{r_0}_r(2^{j_1}r)^h\\
& \times \prod_{i=2}^{d} \left( \grandO{1} k'^{\grandO{1}} \, g^{r_0}(k_i)^{1.01} \, {\overline{\gamma}}^{r_0}_r(2^{j}r) \, {\overline{\gamma}}^{r_0}_r(2^{j_i}r)^{0.01} \right)\\
& \leq \gamma^*_{r_0}(2^{j}r) \, g^*(k')^{1-h} \sum_{d=2}^{k'} \left( \sum_{j_1 \in \N} {\overline{\gamma}}^{r_0}_r(2^{j_1}r)^h \right)\\
& \times \sum_{ k_1, \cdots, k_d \in \N_+: \atop k_1 + \cdots + k_d = k'} \prod_{i=2}^{d} \left(  \grandO{1} k'^{\grandO{1}} \, g^{r_0}(k_i)^{1.01} {\overline{\gamma}}^{r_0}_r(2^{j}r) \, \sum_{j' \in \N} {\overline{\gamma}}^{r_0}_r(2^{j'}r)^{0.01} \right)\\
& \leq \gamma^*_{r_0}(2^{j}r) g^*(k')^{1-h} \sum_{d=2}^{k'} \left( \grandO{1} k'^{\grandO{1}} \, g^{r_0}(k')^{1.01} \, {\overline{\gamma}}^{r_0}_r(2^{j}r) \right)^{d-1}
\end{align*}
(since $\sum_{j_1 \in \N} {\overline{\gamma}}^{r_0}_r(2^{j_1}r)^h \leq \grandO{1}$ and $\sum_{j' \in \N} {\overline{\gamma}}^{r_0}_r(2^{j'}r)^{0.01} \leq \grandO{1}$). Next, note that, if $\theta^*/\theta^{r_0}$ is sufficiently large, then the hypothesis $j > J^*(k')$ implies that:
\[
\grandO{1} k'^{\grandO{1}} \, g^{r_0}(k')^{1.01}  \, {\overline{\gamma}}^{r_0}_r(2^{j}r) \leq 1/2
\]
(indeed, there exists $a > 0$ such that, if $j > J^*(k')$, then $\frac{1}{a} \, g^*(k') \, 2^{-aj} = \frac{1}{a} \, {g^{r_0}(k')}^{\theta^*/\theta^{r_0}} \, 2^{-aj}$ is smaller than or equal to $1$). As a result, if $\theta^*/\theta^{r_0}$ is sufficiently large, then:
\[
H^{*}(j,k') \leq \gamma^*_{r_0}(2^{j}r) g^*(k')^{1-h} \grandO{1} k'^{\grandO{1}} \, g^{r_0}(k')^{1.01}  \, {\overline{\gamma}}^{r_0}_r(2^{j}r) \, .
\]
Now, note that, again if $\theta^*/\theta^{r_0}$ is sufficiently large, we have:
\[
\grandO{1} k'^{\grandO{1}} \, g^{r_0}(k')^{1.01} \leq g^*(k')^h \, ,
\]
hence:
\[
H^*(j,k') \leq \gamma^*_{r_0}(2^{j}r) \, g^*(k') \, {\overline{\gamma}}^{r_0}_r(2^{j}r) \, ,
\]
which is what we want.

\paragraph{C.2. End of the proof.}

All that remains to prove is that~\eqref{recintetc},~\eqref{recr_0} and~\eqref{reccent} imply Proposition~\ref{smallspectrumbis}. Remember that it is sufficient to prove~\eqref{lastlast}. Remember also the definition of $\overline{j}$. By using the quasi-multiplicativity property and~\eqref{poly}, we obtain that it is sufficient to prove that there exists an absolute constant $\theta < +\infty$ such that, if: 
\begin{equation}\label{e.new_cond}
g(k) \, \sqrt{\gamma^*_{r_0}(2^{\overline{j}}r)} \, {\overline{\gamma}}^{r_0}_r(r_0) \leq 1 \, ,
\end{equation}
then:
\begin{equation}\label{e.ttffhh}
\sum_{\widetilde{\mathcal{A}} \in \mathfrak{U}_k} 4^{|\widetilde{\mathcal{A}}|} \, h(\widetilde{\mathcal{A}})^2 \leq g(k) \, \gamma^*_{r_0}(2^{\overline{j}}r) \, {\overline{\gamma}}^{r_0}_r(r_0) \, .
\end{equation}
Assume that~\eqref{e.new_cond} holds, let $S$ be some set such that $|S_r^{r_0}|=k$, and let $C$ be the top cluster of $S$. Also, let $C_1, \cdots, C_d$ be the children of $C$. Note that exactly one of the $C_i$'s is centered, say that it is $C_1$. The other $C_i$'s can be of any other type (and $d$ may equal $1$). As above (and thanks to~\eqref{alpha3+}), our estimates on the $r_0$-clusters will dominate the estimates on the interior, side and corner clusters. Note also that, if $d =1$, then $C_1$ contains $\lbrace 0 \rbrace$ and also at least one $r \times r$ square not included in $(-r_0,r_0)^2$, so $\log_2(r_0/(2r)) \leq j(C_1)$.
We have (the terms $k+1$ and $k_1+1$ come only from the fact that $|C| = |S_r^{r_0} \cup \lbrace 0 \rbrace|=k+1$):
\begin{align*}
& \sum_{\widetilde{\mathcal{A}} \in \mathfrak{U}_k} 4^{|\widetilde{\mathcal{A}}|} \, h(\widetilde{\mathcal{A}})^2 \leq \sum_{\log_2 (r_0/(2r)) \leq j_1 \leq \overline{j}} \left( \alpha_1((k +1)2^{j_1+4} r, 2^{\overline{j}+1-4})^2 \, H^*(j_1,k+1) \right)\\
& + \sum_{d=2}^{k} \sum_{k_1, \cdots, k_d  : \atop k_1 + \cdots + k_d = k} \sum_{j_1, \cdots, j_d \leq \overline{j}} \Big( \alpha_1 \left( \left( (k_1+1) 2^{j_1+4} r \right) \vee (r_0+2), 2^{\overline{j}+1-4}r \right)^2 H^*(j_1,k_1+1)\\
& \times \prod_{i=2}^{d} \Bigg( \grandO{1} k^{\grandO{1}} \, \frac{R}{2^{j_i}r} \, \alpha_4(k_i 2^{j_i+4} r, 2^{\overline{j}+1-4}r) \, \left( \frac{2^{\overline{j}+1-4}}{k_i2^{j_i+4}} \right)^{-\epsilon} \left( g^{r_0}(k_i) \, {\overline{\gamma}}^{r_0}_r(2^{j_i}r) \right)^{1.01} \Big) \Bigg) \, .
\end{align*}
We now use~\eqref{recintetc},~\eqref{recr_0} and~\eqref{reccent} to conclude. The first sum above is smaller than or equal to:
\begin{align*}
& \sum_{\log_2(r_0/(2r)) \leq j_1 \leq \bar{j}} \grandO{1} k^{\grandO{1}} \, \gamma^*_{r_0}(2^{\bar{j}}r) \, g^*(k) \, {\bar{\gamma}}^{r_0}_r(2^{j_1}r)\\
& \leq \grandO{1} k^{\grandO{1}} \, g^*(k) \, \gamma^*_{r_0}(2^{\bar{j}}r) \, \sum_{\log_2(r_0/(2r)) \leq j_1} \, {\bar{\gamma}}^{r_0}_r(2^{j_1}r) \\
& \leq \grandO{1} k^{\grandO{1}} \, g^*(k) \, \gamma^*_{r_0}(2^{\bar{j}}r) \,{\bar{\gamma}}^{r_0}_r(r_0)\\
& \leq 1/2 \, g(k) \, \gamma^*_{r_0}(2^{\bar{j}}r) \, {\bar{\gamma}}^{r_0}_r(r_0) \, ,
\end{align*}
if $\theta/\theta^*$ is sufficiently large. Let us now estimate the second sum. This sum is smaller than or equal to:
\begin{align*}
& \sum_{d=2}^{k} \sum_{k_1, \cdots, k_d  : \atop k_1 + \cdots + k_d = k} \sum_{j_1, \cdots, j_d \leq \bar{j}} \grandO{1} k^{\grandO{1}} \, \gamma^*_{r_0}(2^{\bar{j}}r) \, g^*(k) \, {\bar{\gamma}}^{r_0}_r(2^{j_1}r)\\
& \times \left( \prod_{i=2}^{d} \grandO{1} k^{\grandO{1}} \, g^{r_0}(k_i)^{1,01} \, {\bar{\gamma}}^{r_0}_r(2^{\bar{j}}r) \, {\bar{\gamma}}^{r_0}_r(2^{j_i}r)^{0.01} \right)\\
& \leq g^*(k) \, \gamma^*_{r_0}(2^{\bar{j}}r) \, \sum_{d=2}^{k} \left( \grandO{1} k^{\grandO{1}} \left( \sum_{j' \in \N} {\bar{\gamma}}^{r_0}_r(2^{j'}r)^{0.01} \right)  g^{r_0}(k)^{1.01} \, {\bar{\gamma}}^{r_0}_r(2^{\bar{j}}r) \right)^{d-1}\\
& \leq g^*(k) \, \gamma^*_{r_0}(2^{\bar{j}}r) \, \sum_{d=2}^{k} \left( \grandO{1} k^{\grandO{1}} \, g^{r_0}(k)^{1.01} \, {\bar{\gamma}}^{r_0}_r(2^{\bar{j}}r) \right)^{d-1} \, .
\end{align*}
Note that, if $\theta/\theta^{r_0}$ is sufficiently large, then~\eqref{e.new_cond} and the fact that $r_0 \leq R/2 \leq \grandO{1} k^{\grandO{1}} 2^{\overline{j}}r$ imply that:
\[
\grandO{1} k^{\grandO{1}} \, g^{r_0}(k)^{1.01} \, {\bar{\gamma}}^{r_0}_r(2^{\bar{j}}r) \leq 1/2 \, ,
\]
hence the second sum is at most:
\[
g^*(k) \, \gamma^*_{r_0}(2^{\bar{j}}r) \, \grandO{1} k^{\grandO{1}} \, g^{r_0}(k)^{1.01} \, {\bar{\gamma}}^{r_0}_r(2^{\bar{j}}r) \, .
\]
By using once again that $r_0 \leq \grandO{1} k^{\grandO{1}} 2^{\overline{j}}r$, we obtain that the above is at most:
\[
g^*(k) \, \gamma^*_{r_0}(2^{\bar{j}}r) \, \grandO{1} k^{\grandO{1}} \, g^{r_0}(k)^{1.01} \, {\bar{\gamma}}^{r_0}_r(r_0) \, .
\]
Next, note that, if $\theta/\theta^{r_0}$ and $\theta/\theta^*$ are sufficiently large, then:
\[
g^*(k) \, \grandO{1} k^{\grandO{1}} \, g^{r_0}(k)^{1.01} \leq 1/2 \, g(k) \, ,
\]
and the second sum is at most:
\[
1/2 \, g(k) \, \gamma^*_{r_0}(2^{\bar{j}}r) \, {\bar{\gamma}}^{r_0}_r(r_0) \, .
\]
Finally:
\[
\sum_{\widetilde{\mathcal{A}} \in \mathfrak{U}_k} 4^{|\widetilde{\mathcal{A}}|} \, h(\widetilde{\mathcal{A}})^2
\leq 2 \times 1/2 \, g(k) \, \gamma^*_{r_0}(2^{\bar{j}}r) \, {\bar{\gamma}}^{r_0}_r(r_0) = g(k) \, \gamma^*_{r_0}(2^{\bar{j}}r) \, {\bar{\gamma}}^{r_0}_r(r_0) \, .
\]
This ends the proof of~\eqref{e.ttffhh} and therefore of~Proposition~\ref{smallspectrumbis}.
\end{proof}

\subsubsection{The proof of Theorem~\ref{soimportant}}\label{ss.finalproof}

We now combine Propositions~\ref{weakindependence}, \ref{largedev} and~\ref{smallspectrumbis} in order to prove Theorem~\ref{soimportant}.

\begin{proof}[Proof of Theorem~\ref{soimportant}] The proof is very similar to the proof of Theorem~$7.3$ of~\cite{GPS} (which is Theorem~\ref{GPScool} in our paper). Consequently, we will omit some details. The main difference is that we have to use Proposition~\ref{smallspectrumbis} instead of the estimate~\eqref{verysmallspectrum}.

Note that we can assume that $r \geq \overline{r}$ for some absolute constant $\overline{r}$ (if $r < \overline{r}$ then Theorem~\ref{soimportant} is a direct consequence of Proposition~\ref{smallspectrumbis} applied itself with $r=1$ since $| S^{r_0}_1 | \asymp | S \setminus (-r_0,r_0)^2 |$). Let $(B_i)_i$ be a tiling of the annulus $[-(R+2),R+2]^2 \setminus (-r_0,r_0)^2$ by $l_i \times l'_i$ rectangles with $r \leq l_i, \, l'_i \leq 2r$ for every $i$ (for instance, we can tile with $r \times r$ squares expect near $\partial [-(R+2),(R+2)]^2$ where we use rectangles so that we perfectly tile the annulus). Also, let $\mathcal{S} = \mathcal{S}_{f_R}$ be a spectral sample of $f_R$ and $\mathcal{Z}=\mathcal{Z}_r$ be a random subset of $\lbrace i \in \mathcal{I}_R \, : \, i \notin (-r_0,r_0)^2 \rbrace$ that is independent of $\mathcal{S}_{f_R}$, where each element of $\lbrace i \in \mathcal{I}_R \, : \, i \notin (-r_0,r_0)^2 \rbrace$ is in $\mathcal{Z}$ with probability $\frac{1}{r^2\alpha_4(r)}$ independently of the others. 

It is sufficient to prove that (for some $\epsilon > 0$ and $C<+\infty$):
\[
\Pro \left[ \mathcal{S} \cap \mathcal{Z} = \emptyset \neq \mathcal{S} \setminus (-r_0,r_0)^2 \right] \leq C \frac{\alpha_1(R)}{\alpha_1(r_0)}  \left( \frac{r_0}{r} \right)^{1-\epsilon} \alpha_4(r,r_0) \, .
\]

We first assume that $r < r_0 / 4$. Let $x_i$ be the indicator function of $\left\lbrace \mathcal{S} \cap B_i \neq \emptyset \right\rbrace$ and $y_i$ the indicator function of $\left\lbrace \mathcal{S} \cap B_i \cap \mathcal{Z} \neq \emptyset \right\rbrace$. Let $X$ and $Y$ be as in Proposition~\ref{largedev}. The hypothesis of Proposition~\ref{largedev} is given by Proposition~\ref{weakindependence}. We will also use that $\frac{1}{C_1} |\mathcal{S}_r^{r_0}| \leq X \leq C_1 |\mathcal{S}_r^{r_0}|$ for some absolute constant $C_1 \in (0,+\infty)$. By using Proposition~\ref{largedev}, we obtain:
\begin{eqnarray*}
\Pro \left[ \mathcal{S} \cap \mathcal{Z} = \emptyset \neq \mathcal{S} \setminus (-r_0,r_0)^2 \right] & = & \Pro \left[ Y = 0 < X \right]\\
& \leq & \frac{1}{a} \E \left[ \exp(-aX/e) \un_{X > 0}  \right]\\
& \leq &  \frac{1}{a} \E \left[ \exp(-a|\mathcal{S}_r^{r_0}|/(C_1e)) \un_{\mathcal{S}_r^{r_0} \neq \emptyset}  \right] \, .
\end{eqnarray*}
Then, Proposition~\ref{smallspectrumbis} implies that:
\begin{eqnarray*}
\Pro \left[ \mathcal{S} \cap \mathcal{Z} = \emptyset \neq \mathcal{S} \setminus (-r_0,r_0)^2 \right] & \leq & \frac{1}{a} \sum_{k \in \N_+} \exp(-ak/(C_1e)) \, g(k) \, \frac{\alpha_1(R)}{\alpha_1(r_0)} \left( \frac{r_0}{r} \right)^{1-\epsilon} \alpha_4(r,r_0)\\
& = & C_2 \frac{\alpha_1(R)}{\alpha_1(r_0)} \left( \frac{r_0}{r} \right)^{1-\epsilon} \alpha_4(r,r_0) \, ,
\end{eqnarray*}
for some constant $C_2 < +\infty$ since $g$ is sub-exponential.\\

We now assume that $r \geq r_0/4$. We keep the notations $X$ and $Y$. The problem is that we cannot use Proposition~\ref{weakindependence} for the rectangles $B_i$ that intersect $[-4r,4r]^2$. However, the number of such rectangles is at most $100$. We write $J = \lbrace i \, : \, B_i \text{ does not intersect} [-4r,4r]^2 \rbrace$, $\widetilde{X} = \sum_{i \notin J} x_i$ and $\widetilde{Y} = \sum_{i \notin J} y_i$. We have:
\begin{eqnarray*}
\Pro \left[ \mathcal{S} \cap \mathcal{Z} = \emptyset \neq \mathcal{S} \setminus (-r_0,r_0)^2 \right] & = & \Pro \left[ Y = 0 < X \right]\\
& \leq & \Pro \left[ \, 0 < X \leq 100 \right] + \Pro \left[ \widetilde{Y} = 0 < \widetilde{X} \right] \, .
\end{eqnarray*}
By using Proposition~\ref{largedev} for $\widetilde{X}$ and $\widetilde{Y}$, we obtain that the above is at most:
\begin{multline*}
\Pro \left[ \, 0 < X \leq 100 \right] + \frac{1}{a} \E \left[ \exp(-a\widetilde{X}/e) \un_{\widetilde{X} > 0}  \right]\\
\leq \Pro \left[ \, 0 < X \leq 100 \right] + \frac{1}{a} \E \left[ \exp(-a(X-100)/e) \un_{X > 0}  \right] \, .
\end{multline*}
We now conclude as in the case $r<r_0/4$, i.e. by using the fact that $X \asymp | S^{r_0}_r |$ and Proposition~\ref{smallspectrumbis}.
\end{proof}

\begin{rem}\label{aquantitaiveepsilon}
In this remark, we try to be more quantitative about the $\epsilon$ of Theorem~\ref{soimportant} in the case of site percolation on $\T$, by using the computation of the arm-exponent (see Subsection~\ref{sectionarm}):

As pointed out in Remark~\ref{theexplanation}, we can replace all the exponents $1.01$ that appear in the proof of Proposition~\ref{smallspectrumbis} by $1+a$ for any $a \in (0,1)$. Now, remember that the conditions about $\epsilon$ were that~\eqref{technicalepsilon1} and~\eqref{technicalepsilon2} are satisfied and that:
\[
\E_{1/2} \left[ \Pro_{1/2} \left[ \text{\textbf{A}}_4(r_1,r_2) \cond \mathcal{F}_{\half} \right]^2 \right] \leq \grandO{1} \, \alpha_4(r_1,r_2) \, \left( \frac{r_2}{r_1} \right)^{-\epsilon} \, ,
\]
where $\text{\textbf{A}}_4(r_1,r_2)$ is the $4$-arm event in  the annulus $[-r_2,r_2] \setminus (-r_1,r_1)^2$ and $\half$ is the (lower, upper, left or right) half-plane. In the case of site percolation on $\T$, this last condition holds for any $\epsilon < -5/4 + \zeta_4^{|\half}$ (see Proposition~\ref{exponenthalfplane}). Finally, by combining this with~\eqref{technicalepsilon1} and~\eqref{technicalepsilon2}, we obtain that Proposition~\ref{smallspectrumbis} is true for any $\epsilon < (-5/4 + \zeta_4^{|\half}) \wedge 1/4$. Consequently, this is also the case for Theorem~\ref{soimportant}.
\end{rem}
\medskip

\begin{rem}\label{thepbbis}
The fact that we had to deal with $|S^{r_0}_r|$ instead of $|S_r|$ in Proposition~\ref{smallspectrumbis} \textbf{is a new difficulty} compared to the proof of \eqref{verysmallspectrum} in Section~$4$ of~\cite{GPS}, and that is the reason why we had to introduce the $r_0$-decorated centered annulus structures and deal with the ``$4$-arm event conditioned on the configuration in a half-plane". Now, imagine that we want to deal with Conjecture~\ref{conjquiseraitcool} (stated below) instead of Theorem~\ref{soimportant}. Then, we would have to consider the whole spectral sample so we would not need the notion of $r_0$-decorated centered annulus structures any more, but only the notion of centered annulus structures of~\cite{GPS}. Let us be more precise: If we follow~\cite{GPS} (Subsection~$4.4$) with another choice for the definition of overcrowded centered clusters (a centered cluster would be overcrowded if $\gamma^*_{r}(2^{j(C)}r) g^*(|C|) \overline{\gamma}_r(2^{j(C)}r) > 1$) then we would obtain the following:

For every $S \subseteq \mathcal{I}_R$, let $S_r$ be the set of the $r \times r$ squares of the grid $r\Z^2$ which intersect $S$. There exists some $\theta < +\infty$ such that, for all $k \in \N_+$ and for all $1 \leq r \leq r_0 \leq R/2$:
\[
\widehat{\Pro}_{f_R} \left[ |S_r|=k, \, S \nsubseteq (-r_0,r_0)^2 \right] \leq g(k) \frac{\alpha_1(R)}{\alpha_1(r)} \left( \frac{r_0}{r} \alpha_4(r,r_0) \right)^2 \, ,
\]
where $g(k)=2^{\theta \log_2^2(k+2)}$.

With this result, it seems that we can prove Conjecture~\ref{conjquiseraitcool} exactly as we have proved Theorem~\ref{soimportant} i.e. by using Propositions~\ref{weakindependence} and~\ref{largedev}. Here, $x_i$ would be the indicator functions of the event $\left\lbrace \mathcal{S}_{f_R} \cap B_i \neq \emptyset \, , \, S \nsubseteq (-r_0,r_0)^2 \right\rbrace$, and $y_i$ would be the indicator function of the event $\left\lbrace \mathcal{S}_{f_R} \cap B_i \cap \mathcal{Z} \neq \emptyset \, , \, S \nsubseteq (-r_0,r_0)^2 \right\rbrace$. However, this strategy does not work since the event $\left\lbrace  S \nsubseteq (-r_0,r_0)^2 \right\rbrace$ represents some \textbf{positive information} when we visit the rectangles $B_i$ that are included in $(-r_0,r_0)^2$. Therefore, Proposition~\ref{weakindependence} can no longer guarantee that the hypothesis of Proposition~\ref{largedev} is true. So, in order to prove Conjecture~\ref{conjquiseraitcool}, we would need an analogue of Proposition~\ref{weakindependence} where in the conditioning we can add the event $\lbrace S \nsubseteq (-r_0,r_0)^2 \rbrace$. Techniques from \cite{GPS} are not suitable for such a conditioning, which explains why we cannot prove at the moment the better upper-bound given by Conjecture~\ref{conjquiseraitcool}. 
\end{rem}

\section{Open questions}

Here is a list of a few open problems. 

\bnum
\item \textbf{Asymmetric exclusion dynamics.} Our hypothesis that the underlying exclusion dynamics is symmetric (i.e. $K$ is symmetric) is crucial in our proofs. Indeed the duality formula~\eqref{e.Kt} is no longer valid in the asymmetric setting. A natural question is thus to ask whether the results of the present article still hold by relaxing the symmetry condition. 

\item \textbf{Handling more local dynamics.} Our techniques brake when $\alpha$ becomes too large (the best value of $\alpha$ can be found in equation~\eqref{e.alphamax}). The most extreme (and most interesting) case would be the nearest-neighbour simple exclusion process. We are very far at this point of being able to prove the existence of exceptional times in this case.

\item \textbf{Sharp clustering effect for the radial spectral sample.} In the proof of Corollary~\ref{soimportantcor} (which is our key estimate in Theorem \ref{tropbien}), we used the crude upper-bound:
\[
\widehat{\Pro}_{f_R} \left[ |S| < r^2 \alpha_4(r), \, S \nsubseteq (-r_0,r_0)^2 \right] \leq \widehat{\Pro}_{f_R} \left[ 0 < |S \setminus (-r_0,r_0)^2| < r^2 \alpha_4(r) \right].
\]
We believe we have lost a lot in this inequality and we make the following conjecture (see Remark~\ref{thepbbis}):
\begin{conj}\label{conjquiseraitcool}
There exists a constant $C<+\infty$ such that, for all $1 \leq r \leq r_0$ and all $R \geq 1$:
\[
\widehat{\Pro}_{f_R} \left[ |S| < r^2 \alpha_4(r), \, S \nsubseteq (-r_0,r_0)^2 \right] \leq C \frac{\alpha_1(R)}{\alpha_1(r)} \left( \frac{r_0}{r} \, \alpha_4(r,r_0) \right)^2 \, .
\]
\end{conj}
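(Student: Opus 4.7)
The plan is to follow the same three-step template as in the proof of Theorem~\ref{soimportant} (Subsection~\ref{ss.finalproof}): (i) a combinatorial/annulus-structure estimate on the spectral mass of configurations with a prescribed ``$r$-granularity''; (ii) a weak-independence-type statement in the spirit of Proposition~\ref{weakindependence}; (iii) a large-deviations pass via Proposition~\ref{largedev}. The difference with the proof of Theorem~\ref{soimportant} is that we aim at the sharper bound $\bigl(\tfrac{r_0}{r}\alpha_4(r,r_0)\bigr)^2 \cdot \alpha_1(R)/\alpha_1(r)$ and we must count all of $S_r$ (not just the part $S^{r_0}_r$ outside the box) while still imposing the positive constraint $S\not\subseteq(-r_0,r_0)^2$.

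For step (i), I would follow the route already indicated in Remark~\ref{thepbbis}: apply the centered annulus structure machinery of \cite{GPS} (i.e.\ without any $r_0$-decoration, since we no longer need to distinguish the part of the spectral sample close to $\partial[-r_0,r_0]^2$) but with a modified overcrowding criterion for centered clusters, namely declaring a centered cluster $C$ overcrowded when $\gamma^*_r(2^{j(C)}r)\, g^*(|C|)\, \overline{\gamma}_r(2^{j(C)}r) > 1$. The extra factor $\gamma^*_r(2^{j(C)}r) = \alpha_1(r,2^{j(C)}r)^2$ reflects the need, in order to have $S\not\subseteq(-r_0,r_0)^2$, for the centered cluster to reach out at least to distance $r_0$, which forces a squared one-arm contribution between scales $r$ and $r_0$. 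The inductive proof that yielded \eqref{recintetc}--\eqref{reccent} should carry over essentially unchanged (only the centered-cluster bookkeeping and the final summation in paragraph~C.2 are different), yielding
\[
\widehat{\Pro}_{f_R}\bigl[|S_r|=k,\ S\not\subseteq(-r_0,r_0)^2\bigr]
\leq g(k)\,\frac{\alpha_1(R)}{\alpha_1(r)}\left(\frac{r_0}{r}\,\alpha_4(r,r_0)\right)^{2}.
\]

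For step (ii), one wants to upgrade Proposition~\ref{weakindependence} so that the \emph{positive} event $\{\mathcal{S}\not\subseteq(-r_0,r_0)^2\}$ may be added to the conditioning. This is the main obstacle, explicitly flagged in Remark~\ref{thepbbis}. My strategy is to split the tiling rectangles $(B_i)$ into two families: (a) rectangles $B\subseteq \mathcal{I}_R\setminus(-r_0,r_0)^2$, for which $\{\mathcal{S}\cap B\neq\emptyset\}$ already entails $\{\mathcal{S}\not\subseteq(-r_0,r_0)^2\}$ and Proposition~\ref{weakindependence} applies verbatim; (b) rectangles $B\subseteq(-r_0,r_0)^2$ with $\mathrm{dist}(B,\partial(-r_0,r_0)^2)\geq c\,r$, for which one must prove that, uniformly in admissible $W$ disjoint from $B$,
\[
\Pro\Bigl[\,\mathcal{S}\cap B'\cap \mathcal{Z}\neq\emptyset \;\Bigm|\;
\mathcal{S}\cap B\neq\emptyset=\mathcal{S}\cap W,\ \mathcal{S}\cap(\mathcal{I}_R\setminus(-r_0,r_0)^2)\neq\emptyset\,\Bigr]\ \geq\ a.
\]
The natural approach is to express both sides through the jointly-pivotal framework of Definition~\ref{jointlypivotal} and Lemma~\ref{JP}, so that the positive event $\{\mathcal{S}\cap D\neq\emptyset\}$ (with $D=\mathcal{I}_R\setminus(-r_0,r_0)^2$) is rewritten as the event that a certain Boolean functional of the exterior $\omega$-configuration is pivotal. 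RSW/quasi-multiplicativity then provide the key geometric input: since $B'$ sits at distance $\gtrsim r$ from $D$, the local $4$-arm pivotality in the neighbourhood of $B'$ decouples from the exterior structure up to a bounded multiplicative factor, so that the density of each point of $B'$ in $\mathcal{S}$ is preserved up to a constant by the additional exterior conditioning. This is the place where genuinely new techniques beyond those of \cite{GPS} are needed.

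Granting (i) and (ii), step (iii) is essentially identical to Subsection~\ref{ss.finalproof}: one samples $\mathcal{Z}$ as a Bernoulli $(r^2\alpha_4(r))^{-1}$ subset of all of $\mathcal{I}_R$, sets $x_i=\un\{\mathcal{S}\cap B_i\neq\emptyset,\,\mathcal{S}\not\subseteq(-r_0,r_0)^2\}$ and $y_i$ analogous with the extra constraint $\mathcal{Z}\cap B_i\cap\mathcal{S}\neq\emptyset$, verifies the hypothesis of Proposition~\ref{largedev} via the strengthened Proposition~\ref{weakindependence} (uniformly in $i$ and in the negative information collected from the other $y_j$'s), and plugs the resulting exponential tail on $X=\sum_i x_i$ into the sub-exponential estimate from step (i), summing over $k\in\N_+$. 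The hard part is unambiguously step (ii); the whole argument will stand or fall with a robust replacement for Proposition~\ref{weakindependence} that can absorb the positive exterior event.
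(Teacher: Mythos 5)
This statement is a \emph{conjecture} in the paper (Conjecture~\ref{conjquiseraitcool}); the authors do not prove it, and Remark~\ref{thepbbis} is precisely a discussion of why the natural route fails. Your proposal in fact retraces the paper's own sketch of that route and lands on the same obstruction, so there is no contradiction with the paper — but there is no proof here either.

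Your step (i) is the modified annulus-structure bound; this is exactly the content of Remark~\ref{thepbbis}, and the paper indicates it should go through with the redefined overcrowding criterion. Your step (iii) is a verbatim repeat of Subsection~\ref{ss.finalproof}. The entire weight of the argument sits on step (ii), and there you do not close the gap. The claim that ``the local $4$-arm pivotality in the neighbourhood of $B'$ decouples from the exterior structure up to a bounded multiplicative factor'' given the positive event $\{\mathcal{S}\not\subseteq(-r_0,r_0)^2\}$ is precisely what needs proof, and it is the thing the paper says the \cite{GPS} machinery cannot deliver. Proposition~\ref{weakindependence} relies crucially on the conditioning being of the form $\{\mathcal{S}\cap B\neq\emptyset=\mathcal{S}\cap W\}$ — one positive local constraint plus arbitrary \emph{negative} information — because this allows the proof to express the relevant ratios of spectral masses via quantities like $\widehat{\Q}_h[\mathcal{S}\cap B\neq\emptyset=\mathcal{S}\cap W]$, which Lemma~\ref{JP} controls with a \emph{squared} conditional probability of a pivotality event. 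Adding a second positive constraint $\{\mathcal{S}\cap D\neq\emptyset\}$ with $D$ far from $B$ breaks this structure: the spectral measure is not a product measure, two disjoint positive requests do not factorize, and one would need a genuinely new coupling argument (or a new spectral inequality in the spirit of Lemma~\ref{JP} but conditioned on $\mathcal{S}\cap D\neq\emptyset$). You acknowledge this yourself in your closing sentence. So this is an outline, not a proof: you have correctly diagnosed the obstruction, but you have not supplied the new idea needed to remove it, and the paper itself records that as of its writing no such idea was available.
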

Note that if we had proved this conjecture then we would have obtained a bigger $\alpha_0$ in Theorem~\ref{tropbien}.

\item \textbf{Clustering effect for left-right crossing events.} One of the main side technical contributions of this paper is our clustering result Theorem \ref{soimportant}. Even though it does not give a sharp estimate on $\widehat{\Pro}_{f_R} \left[ |S| < r^2 \alpha_4(r), S \nsubseteq (-r_0,r_0)^2 \right]$ as discussed in the item above, it provides the first  polynomial clustering estimate on the spectral sample of the one-arm event $f_R$. Indeed, such a {\em clustering effect} had already been analysed in \cite{GPS}, but it only gave rather weak (logarithmic) bounds. See Remark 4.5 in \cite{GPS}. Now, if $g_n$ is the indicator function of the left-right crossing of $[-n,n]^2$, a similar polynomial clustering effect should hold as $n \to +\infty$. More precisely, the following analogue of Conjecture~\ref{conjquiseraitcool} for the functions $g_n$ should hold:
\begin{conj}\label{c.LR}
There exists a constant $C < +\infty$ such that, for all $1 \leq r \leq r_0$ and all $n \geq 1$:
\[
\widehat{\Pro}_{g_n} \left[ |S| < r^2 \alpha_4(r), \, \text{\textup{diam}}(S) \geq r_0 \right] \leq C \left( \frac{n}{r} \, \alpha_4(r,n) \right)^2 \left( \frac{r_0}{r} \, \alpha_4(r,r_0) \right)^2 \, .
\]
\end{conj}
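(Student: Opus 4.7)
My plan is to adapt to the left-right crossing event $g_n$ the strategy developed in this paper for the radial event $f_R$ (Theorem~\ref{soimportant}). The first step is combinatorial: I would establish an analogue of Proposition~\ref{smallspectrumbis},
\[
\widehat{\Pro}_{g_n}\left[ |S_r| = k,\, \diam(S) \geq r_0 \right] \;\leq\; g(k) \left( \frac{n}{r}\,\alpha_4(r,n) \right)^{2} \left( \frac{r_0}{r}\,\alpha_4(r,r_0) \right)^{2},
\]
for some sub-exponential function $g$, by enumerating annulus structures classifying the hierarchical clustering of $S_r$ in the spirit of Section~4 of~\cite{GPS}, but with an additional constraint corresponding to the diameter condition. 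The condition $\diam(S) \geq r_0$ forces a merging in the hierarchical tree at some level $j^{*}$ of spatial scale $2^{j^{*}} r \asymp r_0$, where two previously distinct sub-clusters lying in boxes at distance $\gtrsim r_0$ join into one. By including in the annulus structure two additional ``diameter-witness'' annuli of inner radius comparable to $r$ and outer radius $\sim r_0$ surrounding these merging sub-clusters, one picks up the extra factor $(r_0/r \cdot \alpha_4(r,r_0))^{2}$, while the portion of the structure at scales above $r_0$ contributes $(n/r_0 \cdot \alpha_4(r_0,n))^{2}$; quasi-multiplicativity then combines these into the right-hand side.

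I would then run the inductive summation of paragraph~C of the proof of Proposition~\ref{smallspectrumbis}, with an appropriately modified notion of overcrowded cluster designed so that the witnessing annuli at scale $r_0$ are never removed. This combinatorial step is entirely analogous to arguments already present in this paper, and I would not expect any serious obstacle there.

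The second, probabilistic, step is to convert this estimate on $|S_r|$ into the announced bound on $|S|$ via the sprinkling/large-deviation argument of Subsection~\ref{ss.finalproof}. The natural choice is to take $x_i := \un\{\mathcal{S} \cap B_i \neq \emptyset,\, \diam(\mathcal{S}) \geq r_0\}$ and $y_i := \un\{\mathcal{S} \cap B_i \cap \mathcal{Z} \neq \emptyset,\, \diam(\mathcal{S}) \geq r_0\}$ along a tiling $(B_i)$ of $\mathcal{I}_n$ by $r \times r$ boxes, and to invoke Proposition~\ref{largedev}. Verifying its hypothesis amounts to showing
\[
\Pro\left[ y_i = 1 \mid y_j = 0,\ \forall j \in J \right] \;\geq\; a \cdot \Pro\left[ x_i = 1 \mid y_j = 0,\ \forall j \in J \right].
\]

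This is exactly where the main obstacle arises, as already flagged in Remark~\ref{thepbbis}: the event $\{\diam(\mathcal{S}) \geq r_0\}$ is \emph{positive} information about the spectral sample, whereas Proposition~\ref{weakindependence} only permits negative information in the conditioning. A plausible route would be to decompose $\{\diam(\mathcal{S}) \geq r_0\}$ as a union over some canonical ``first witnessing pair'' $(i^{*}, j^{*})$ of boxes with $|B_{i^{*}} - B_{j^{*}}| \geq r_0$ and both $\mathcal{S} \cap B_{i^{*}}$ and $\mathcal{S} \cap B_{j^{*}}$ non-empty, and then to prove a variant of Proposition~\ref{weakindependence} allowing positive conditioning on two distinguished boxes being intersected. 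Such a strengthening appears to lie beyond the second-moment-type arguments of Section~2 of~\cite{GPS}, which is the principal reason why Conjecture~\ref{c.LR} remains open.
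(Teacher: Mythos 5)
The statement you are asked about is labelled as a \emph{conjecture} in the paper; the paper offers no proof of it and explicitly leaves it open. Your ``proposal'' is accordingly not a proof but an analysis of why the conjecture resists the techniques of the paper, and on the essential point you are correct and in agreement with the authors: the obstruction is that the diameter constraint $\{\mathrm{diam}(\mathcal{S}) \geq r_0\}$ constitutes \emph{positive} information about the spectral sample, while Proposition~\ref{weakindependence} only accommodates negative (emptiness) information in the conditioning, so that the sprinkling argument of Subsection~\ref{ss.finalproof} cannot be run. This is exactly the diagnosis the paper gives in Remark~\ref{thepbbis} for the closely related radial Conjecture~\ref{conjquiseraitcool}, and your suggestion that one would need a strengthened independence result allowing positive conditioning on distinguished boxes is the same remedy the authors contemplate and fail to obtain.

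Where you are somewhat too optimistic is the combinatorial step. For the radial analogue, Remark~\ref{thepbbis} indeed sketches how a version of the annulus-structure estimate incorporating the event $\{S \not\subseteq (-r_0,r_0)^2\}$ can be extracted from Subsection~4.4 of~\cite{GPS}, so in that case the first step is essentially secured. For $g_n$, however, the paper explicitly flags an \emph{extra} difficulty (in the paragraph immediately following Conjecture~\ref{c.LR}): the spectral sample of a left-right crossing has no canonical anchoring point, and the extra freedom as to where in $[-n,n]^2$ a small spectral sample localizes introduces a union-over-locations cost that does not appear in the radial case. Your sketch -- a merging level $j^{*}$ at scale $\sim r_0$ and a pair of ``diameter-witness'' annuli contributing $(r_0/r\,\alpha_4(r,r_0))^2$ -- glosses over this: to make the inductive summation of paragraph~C close, one would have to show that the choice of the two witness locations, and the position of the top cluster, do not degrade the bound, and this requires new percolation input beyond what is ``entirely analogous'' to what the paper proves for $f_R$. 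So your account correctly locates the principal blocker but understates the difficulty of the combinatorial step; the honest summary is that \emph{both} steps present new obstacles for $g_n$, not just the second.
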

Somewhat surprisingly, it turns out to be easier for such clustering effects to deal with degenerate Boolean functions such as $f_R$ rather than left-right crossing events $g_n$. This is due to the fact that we know in the case of $\widehat{\Pro}_{f_R}$ that the spectral sample will most likely localize in a ball centered at the origin. The additional flexibility corresponding to where in $[-n,n]^2$ the spectral sample of $g_n$ will choose to localize adds new difficulties.

%
\enum

\appendix

\section{Appendix: graphical construction (\`a la Harris) of exclusion dynamics}\label{a.graphical}

In this appendix, we give a proper graphical construction of the exclusion dynamics we need. This is in the spirit of the graphical constructions of particle systems initiated by Harris, see for example \cite{Harris}. The content of this appendix is very basic and will probably be considered ``folklore'' by specialists. Yet, as we could not localize a reference, we include it here. 

Let us then define properly the $K$-exclusion process, for instance for dynamics on the edges of a graph $G=(V,E)$. First, we sample a percolation configuration $\omega_K(0)$ according to some initial law. Next, to each pair $\left\lbrace e,f \right\rbrace$ of edges, we associate an exponential clock of parameter $K(e,f)=K(f,e)$ (independent of the others and $\omega_K(0)$). We define the càdlàg process $(\omega_K(t))_{t \geq 0}$ on the space $\Omega:=\left\lbrace -1,1 \right\rbrace ^{E}$ (seen as the compact metric product space) as follows:
\begin{enumerate}
\item First, we want to define a (random) dynamical permutation of $E$. Take $e \in E$. Let $\tau_1(e)$ be the first time a clock associated to $e$ (i.e. the clock associated to $\lbrace e,f \rbrace$ for some edge $f$) has rung and let $e_1$ be the other edge associated to this clock. Define recursively $\tau_{n+1}(e)$ to be the first time larger than $\tau_n(e)$ such that a clock associated to $e_n$ has rung and let $e_{n+1}$ be the other edge associated to this clock. Now, for each $t \geq 0$, let $n_t(e) = \sup \left\lbrace n : \tau_n(e) \leq t \right\rbrace$ (with $\sup \emptyset = 0$) and let $\pi_t$ be the (random) permutation of $E$ defined by:
\begin{equation}\label{defiofpi}
\pi_t(e) = e_{n_t(e)}
\end{equation}
(with $e_0:=e$).

Note that a.s., for all $t$ and $e$, $\pi_t(e)$ is well defined since a.s. for all $t$ and all $e$:
\begin{enumerate}
\item there exists at most one edge $e'$ such that the clock associated to $\lbrace e,e' \rbrace$ has rung at time $t$,
\item $n_t(e)$ is finite.
\end{enumerate}
Let us prove that a.s. $\pi_t$ is indeed a permutation:
\item To this purpose, we define a function that will turn out to be the reciprocal function of $\pi_t$. To do so, we follow the same steps as for the definition of $\pi_t$ but we start from time $t$ and look back in time. More precisely, if $e$ is some edge, we denote by $\widehat{\tau}_1(e)$ the largest time less than or equal to $t$ such that a clock associated to $e$ has rung. If such a time does not exist, we write $\widehat{\tau}_1(e)= - \infty$. Otherwise, we write $\widehat{e}_1$ for the other edge associated to this clock. Then, recursively, if $\widehat{\tau}_n(e) \neq - \infty$, we write $\widehat{\tau}_{n+1}(e)$ for the largest time less than $\widehat{\tau}_n(e)$ such that a clock associated to $\widehat{e}_n$ has rung. If such a time does not exist, we write $\widehat{\tau}_{n+1}(e)=-\infty$. Otherwise, we write $\widehat{e}_{n+1}$ for the other edge associated to the clock. Let $\widehat{n}_t(e)$ be the first $k \in \N_+$ such that $\widehat{\tau}_k(e)=- \infty$. It is not difficult to show that a.s. for all $t$ the function $e \mapsto \widehat{e}_{\widehat{n}_t(e)-1}$ (with $\widehat{e}_0:=e$) is well defined and is the reciprocal function of $\pi_t$.
\item Now, we can define $\omega_K(t)$ stating that the state of the edge $\pi_t(e)$ at time $t$ is the state of the edge $e$ at time $0$. In other words, the configuration at time $t$ is:
\[
\omega_K(t) : e \longmapsto \omega(0)_{\pi_t^{-1}(e)} \, .
\]
\end{enumerate}

Using item~$2$ above (that defines explicitly $\pi_t^{-1}(e)$), it is not difficult to show that we have obtained a  c\`adl\`ag Markov process and that the probability measures $\Pro_p$ are invariant measures for this process.

\section{The proof of Lemma~\ref{JP}}\label{a.B}

In this appendix, we prove Lemma~\ref{JP}. First, we prove by induction on $n \geq 1$ that:
\begin{align}
&\widehat{\Q}_h \left[ \forall j \in \lbrace 1, \cdots, n \rbrace, \, S \cap J_j \neq \emptyset \, , \,S \cap W = \emptyset \right]\nonumber\\
& = \sum_{k=0}^{n} (-1)^k \sum_{1 \leq j_1 < \ldots < j_k \leq n} \E_{1/2} \left[ \E_{1/2} \left[ h \cond \mathcal{F}_{\left( \cup_{i=1}^k J_{j_i} \cup W \right)^c} \right]^2 \right]\label{grosseexpression} \, .
\end{align}
If $n=1$ this is equation~(2.14) in the proof of Lemma~$2.2$ of~\cite{GPS}. We also follow the proof of this lemma for $n \geq 2$. Take $n \in \N_+$, assume that the result is true for any $J'_1, \cdots, J'_n$ and $W'$ mutually disjoint subsets of $\mathcal{I}_R$, and let $J_1, \cdots, J_{n+1}$ and $W$ be mutually disjoint subsets of $\mathcal{I}_R$. We have:
\begin{align*}
& \widehat{\Q}_h \left[ \forall j \in \lbrace 1, \cdots, n+1 \rbrace, S \cap J_j \neq \emptyset \, , \,S \cap W = \emptyset \right]\\
& = \widehat{\Q}_h \left[ \forall j \in \lbrace 1, \cdots, n \rbrace, S \cap J_j \neq \emptyset \, , \,S \cap W = \emptyset \right]\\
& \hspace{1cm} - \widehat{\Q}_h \left[ \forall j \in \lbrace 1, \cdots, n \rbrace, S \cap J_j \neq \emptyset \, , \,S \cap \left( W \cup J_{n+1} \right) = \emptyset \right]\\
& = \sum_{k=0}^{n} (-1)^k \sum_{1 \leq j_1 < \ldots < j_k \leq n} \E_{1/2} \left[ \E_{1/2} \left[ h \cond \mathcal{F}_{\left( \cup_{i=1}^k J_{j_i} \cup W \right)^c} \right]^2 \right]\\
& \hspace{1cm} - \sum_{k=0}^{n} (-1)^k \sum_{1 \leq j_1 < \ldots < j_k \leq n} \E_{1/2} \left[ \E_{1/2} \left[ h \cond \mathcal{F}_{\left( \cup_{i=1}^k J_{j_i} \cup J_{n+1} \cup W \right)^c} \right]^2 \right]\\
& = \sum_{k=0}^{n+1} (-1)^k \sum_{1 \leq j_1 < \ldots < j_k \leq n+1} \E_{1/2} \left[ \E_{1/2} \left[ h \cond \mathcal{F}_{\left( \cup_{i=1}^k J_{j_i} \cup W \right)^c} \right]^2 \right],
\end{align*}
and the induction is over.
\medskip

Now, we prove Lemma~\ref{JP}, also by induction on $n$.

If $n=1$, this is Lemma~2.2 of \cite{GPS}. We assume that Lemma~\ref{JP} holds for some $n \in \N_+$ and we want to prove it for $n+1$. Thanks to \eqref{grosseexpression}, it is sufficient to study the quantity $\sum_{k=0}^{n+1} (-1)^k \sum_{1 \leq j_1 < \ldots < j_k \leq n+1} \E_{1/2} \left[ \E_{1/2} \left[ h \cond \mathcal{F}_{\left( \cup_{i=1}^{k} J_{j_i} \cup W \right)^c} \right]^2 \right]$, which equals:
\begin{multline*}
\sum_{k=0}^{n} (-1)^k \sum_{1 \leq j_1 < \ldots < j_k \leq n} \Bigg( \E_{1/2} \left[  \E_{1/2} \left[ h \cond \mathcal{F}_{\left( \cup_{i=1}^{k} J_{j_i} \cup W \right)^c} \right]^2 \right]\\
- \E_{1/2} \left[ \E_{1/2} \left[ h \cond \mathcal{F}_{\left( \cup_{i=1}^{k} J_{j_i} \cup J_{n+1} \cup W \right)^c} \right]^2 \right] \Bigg) \, .
\end{multline*}
Note that:
\begin{multline*}
\E_{1/2} \left[  \E_{1/2} \left[ h \cond \mathcal{F}_{\left( \cup_{i=1}^{k} J_{j_i} \cup W \right)^c} \right]^2 \right] - \E_{1/2} \left[ \E_{1/2} \left[ h \cond \mathcal{F}_{\left( \cup_{i=1}^{k} J_{j_i} \cup J_{n+1} \cup W \right)^c} \right]^2 \right]\\ = \E_{1/2} \left[ \left( \E_{1/2} \left[ h \cond \mathcal{F}_{\left( \cup_{i=1}^{k} J_{j_i} \cup W \right)^c} \right] - \E_{1/2} \left[ h \cond \mathcal{F}_{\left( \cup_{i=1}^{k} J_{j_i} \cup J_{n+1} \cup W \right)^c} \right] \right)^2 \right] \, .
\end{multline*}
Moreover, since $\Pro_{1/2}$ is the uniform measure, we have:
\[
\E_{1/2} \left[ h \cond \mathcal{F}_{\left( \cup_{i = 1}^k J_{j_i} \cup J_{n+1} \cup W \right)^c} \right] = \E_{1/2} \left[ \E_{1/2} \left[ h \cond \mathcal{F}_{J_{n+1}^c} \right] \cond \mathcal{F}_{\left( \cup_{i = 1}^k J_{j_i} \cup W \right)^c} \right] \, .
\]
Therefore:
\begin{multline*}
\sum_{k=0}^{n+1} (-1)^k \sum_{1 \leq j_1 < \ldots < j_k \leq n+1} \E_{1/2} \left[ \E_{1/2} \left[ h \cond \mathcal{F}_{\left( \cup_{i=1}^{k} J_{j_i} \cup W \right)^c} \right]^2 \right]\\
= \sum_{k=0}^{n} (-1)^k \sum_{1 \leq j_1 < \ldots < j_k \leq n} \E_{1/2} \left[ \E_{1/2} \left[ h - \E_{1/2} \left[ h \cond \mathcal{F}_{J_{n+1}^c} \right] \cond \mathcal{F}_{\left( \cup_{i=1}^{k} J_{j_i} \cup W \right)^c} \right]^2 \right] \, .
\end{multline*}
By using~\eqref{grosseexpression} and the induction hypothesis for $h - \E_{1/2} \left[ h \, | \, J_{n+1}^c \right]$, we obtain that the above equals:
\begin{align*}
& \widehat{\Q}_{h - \E_{1/2} [ h | \mathcal{F}_{J_{n+1}^c} ]} \left[ \forall j \in \lbrace 1, \cdots, n \rbrace, S \cap J_j \neq \emptyset \, , \, S \cap W = \emptyset \right]\\
& \leq 4^ n \left\lVert h - \E_{1/2} \left[ h \cond \mathcal{F}_{J_{n+1}^c} \right] \right\rVert_\infty^2 \E_{1/2} \left[ \Pro_{1/2} \left[ JP_{J_1, \cdots, J_n} \left( h - \E_{1/2} \left[ h \cond \mathcal{F}_{J_{n+1}^c} \right] \right) \cond \mathcal{F}_{W^c} \right]^2 \right]\\
& \leq 4^{n+1} \parallel h \parallel_\infty^2 \E_{1/2} \left[ \Pro_{1/2} \left[ JP_{J_1, \cdots, J_n} \left( h - \E_{1/2} \left[ h \cond \mathcal{F}_{J_{n+1}^c} \right] \right) \cond \mathcal{F}_{W^c} \right]^2 \right] \, .
\end{align*}
The proof is over since:
\[
JP_{J_1, \cdots, J_n} \left(h - \E_{1/2} \left[ h \cond \mathcal{F}_{J_{n+1}^c} \right] \right) \subseteq JP_{J_1, \cdots, J_{n+1}}(h) \, .
\]

\section{Appendix: the $4$-arm event conditioned on the configuration in a half-plane}\label{a.eps}

Consider bond percolation on $\Z^2$ or site percolation on $\T$ and see Lemma~\ref{JP} for the notation $\mathcal{F}_B$.

\begin{lem}\label{lemmerigolo}
Let $r_2 \geq r_1 \geq 1$, let $H$ be the lower half plane and let $\half = H \cap \mathcal{I}$. There exists an absolute constant $C < +\infty$ such that:
\[
\E_{1/2} \left[ \Pro_{1/2} \left[ \text{\textup{\bf{A}}}_4(r_1,r_2) \cond \mathcal{F}_{\half} \right]^2 \right] \leq C \, \alpha_4(r_1,r_2) \, \left( \frac{r_2}{r_1} \right)^{-1/C} \, ,
\]
where $\text{\textup{\bf{A}}}_4(r_1,r_2)$ is the $4$-arm event in  the annulus $[-r_2,r_2] \setminus (-r_1,r_1)^2$. (Such an estimate is also true with the right, left or upper half-plane and the proof is the same.)
\end{lem}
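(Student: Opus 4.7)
The plan is to rewrite the second moment as a joint probability for two coupled configurations and to extract the polynomial gain from the strict inequality between the half-plane and full-plane arm exponents. To set up the coupling, I would introduce $\omega^1, \omega^2$ such that $\omega^1|_{\half}=\omega^2|_{\half}$ while $\omega^1|_{\mathcal{I}\setminus \half}$ and $\omega^2|_{\mathcal{I}\setminus \half}$ are independent, each $\omega^i$ having marginal law $\Pro_{1/2}$. An elementary identity then gives
\[
\E_{1/2}\!\left[\Pro_{1/2}\!\left[\text{\textup{\bf{A}}}_4(r_1,r_2)\cond \mathcal{F}_{\half}\right]^2\right] \;=\; \Pro\!\left[\text{\textup{\bf{A}}}_4(r_1,r_2)(\omega^1)\cap \text{\textup{\bf{A}}}_4(r_1,r_2)(\omega^2)\right],
\]
so it suffices to bound this joint probability.

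I would then split the joint event by a topological dichotomy: either (a) in at least one of $\omega^1,\omega^2$, say $\omega^1$, the four alternating arms of $\text{\textup{\bf{A}}}_4(r_1,r_2)$ in $A=[-r_2,r_2]^2\setminus (-r_1,r_1)^2$ can all be realised in the closed lower half-plane, or (b) in both $\omega^1$ and $\omega^2$ at least one arm of the $4$-arm event must enter the open upper half-plane. In case (a), the ``confined'' $4$-arm event forces a half-plane $3$-arm event at every intermediate dyadic scale, so that by~\eqref{quasimulti} and the sharp estimate $\alpha_3^+(r,R)\asymp (r/R)^2$ from~\eqref{alpha3+} this contribution is bounded by $C(r_1/r_2)^2$. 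Comparing against $\alpha_4(r_1,r_2) \geq (r_1/r_2)^{5/4+o(1)}$ on $\T$ via~\eqref{5/4}, and against the weaker lower bound in~\eqref{alpha4} on $\Z^2$, yields that case (a) is polynomially smaller than $\alpha_4(r_1,r_2)$ by a factor $(r_1/r_2)^{c_0}$ for some absolute $c_0>0$. In case (b), since $\omega^1$ and $\omega^2$ are independent on $\mathcal I\setminus\half$, I would pick a mesoscopic sub-annulus of $A$ well separated from the real axis and pay an extra half-plane arm cost for each of $\omega^1$ and $\omega^2$ \emph{independently}, combining with the usual $4$-arm baseline via quasi-multiplicativity to produce a gain $(r_1/r_2)^{c_1}$ on top of $\alpha_4(r_1,r_2)$.

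The main obstacle will be making the combinatorial step in case (b) rigorous. One has to enumerate the finitely many topological types for how the four arms distribute between the two half-planes (each arm being entirely above, entirely below, or crossing the real axis), and in every type isolate a concrete upper-half multi-arm event that can be bounded via~\eqref{alpha3+} together with the standard separation-of-arms/gluing machinery of~\cite{kesten1987scaling, nolin2008near}; the BK inequality applied inside the upper half, where $\omega^1$ and $\omega^2$ are independent, then yields the desired squaring of the extra half-plane cost. Combining cases (a) and (b) and setting $1/C:=\min(c_0,c_1)$ then finishes the proof. The argument is uniform across the two models, since the only quantitative inputs it uses---\eqref{poly}, \eqref{quasimulti}, \eqref{alpha3+}, and \eqref{alpha4}---are valid for both site percolation on $\T$ and bond percolation on $\Z^2$.
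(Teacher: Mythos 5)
Your coupling identity for the second moment (rewriting it as $\Pro[\omega^1,\omega^2 \in \textbf{A}_4(r_1,r_2)]$ with $\omega^1,\omega^2$ agreeing on $\half$ and independent on $\half^c$) is exactly the paper's starting point, and your case~(a) bound is sound: confinement to $\closure{H}$ gives a half-plane multi-arm event of probability $\lesssim (r_1/r_2)^2$, which is polynomially smaller than $\alpha_4(r_1,r_2)$ by the left-hand inequality of~\eqref{alpha4}. The genuine gap is in case~(b), and it is not a technicality that ``standard separation/gluing machinery'' can be expected to fill.

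The problem is that your dichotomy is too coarse: being forced into the open upper half-plane is not itself a quantitative event. Consider configurations $\omega^1$ whose four arms lie within distance $O(1)$ of $\closure{H}$ but make one unavoidable one-site excursion above the real axis (so no realisation stays in $\closure{H}$ and you are in case~(b)). For such $\omega^1$ the ``upper-half event'' you would want to isolate lives in a handful of sites and has probability $\Theta(1)$; squaring it gains nothing. Your proposed bound ``$\alpha_4 \times (\text{upper-half cost})^2$'' therefore reproduces $\alpha_4$, with no gain. The correct bound for these configurations is $\approx \alpha_4^+(r_1,r_2)$, because the shared lower-half part is itself a half-plane-type $4$-arm event, but your scheme charges the full-plane $\alpha_4$ baseline for that shared part. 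In other words, what you want to extract in case~(b) --- that the conditional cost of the upper-half completion is polynomially small given the lower half --- is precisely the statement of the lemma, and the argument becomes circular unless you identify a concrete, scale-sensitive event that pays the gain. The mesoscopic sub-annulus ``well separated from the real axis'' does not serve this purpose: if the arm excursions into the upper half occur at a much smaller scale, nothing is forced in that sub-annulus. Enumerating finitely many topological types does not resolve this either, since the problematic configurations form a continuum of ``nearly-confined'' realisations rather than a single type.

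The paper avoids this by working scale by scale instead of globally. It introduces, at each scale $\rho_k=M^k$, a concrete ``shield'' event $B(k)$: two open paths in the upper half-plane running just above the real axis across the annulus of scales $[\rho_k,\rho_{k+1}]$ (Figures~\ref{pleindanneaux}, \ref{pleindanneauxetchemins}). By RSW/FKG, $B(k)$ holds in $\omega$ with probability $\geq c(M)>0$. The crucial step is that \emph{conditionally on $B(k)$}, the $4$-arm event at scale $k$ forces a $3$-arm event in the half-plane-like region below the shield, costing $\lesssim M^{-2}$ by~\eqref{alpha3+} --- strictly smaller than the unconditional one-scale cost $\alpha_4(\rho_k,\rho_{k+1})$ by the left-hand inequality of~\eqref{alpha4}. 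At scales where $B(k)$ fails in $\omega$ one simply charges $\omega'$ the generic $\alpha_4$ cost. Since $B(k)$ holds at a positive fraction of scales with exponentially high probability, the per-scale factor $\leq 1/2$ compounds to the desired polynomial gain. This is a conditional, multi-scale mechanism that degrades gracefully on the nearly-confined configurations, which is exactly what your global topological dichotomy cannot do.
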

\begin{proof}
As pointed out in the beginning of Subsection~$5.3$ in~\cite{GPS} for analogous events, it is not difficult to see that, if $\omega$ and $\omega'$ are two critical percolation configurations which coincide on $\half$ but are independent on $\half^c$, then:
\begin{equation}\label{stuce}
\E_{1/2} \left[ \Pro_{1/2} \left[ \text{\textbf{A}}_4(r_1,r_2) \cond \mathcal{F}_{\half} \right]^2 \right] = \Pro \left[ \omega, \omega' \in \text{\textbf{A}}_4(r_1,r_2) \right] \, .
\end{equation}

Let $M \geq 100$ that we will choose later. First note that it is sufficient to prove the lemma for $r_1 < r_2$ of the form $\rho_l = M^l$ for some $l \in \N_+$. Let $1 \leq i < j$ be such that $r_1=\rho_i=M^i$ and $r_2=\rho_j=M^j$. Let $B(k)=B(k,M)$ be the event that there exist open paths in the annulus $[-\rho_{k+1},\rho_{k+1}]^2 \setminus (-\rho_k,\rho_k)^2$ as in Figure~\ref{pleindanneaux}. By the FKG-inequality and the RSW-estimate, there exists $c=c(M)>0$ such that for all $k$ we have $\Pro_{1/2} \left[ B(k) \right] \geq c$. Given a realization of our variables $\omega$ and $\omega'$, we write $i \leq k_1 < \cdots < k_N \leq j-1$ the $k$'s such that $B(k)$ is satisfied in $\omega$ (note that the random variables $N$ and $(k_1, \cdots, k_N)$ are measurable with respect to $\omega$ and are independent of $\omega'$). Note also that (by classical properties of the Binomial distribution and thanks to the existence of the above $c > 0$) there exists $a=a(M) \in (0,1)$ such that the probability of the event $\lbrace N \leq  a \log_M(r_2/r_1) \rbrace$ is less than or equal to $\frac{1}{a}(1-a)^{\log_M(r_2/r_1)}$.
\medskip

Next, condition on $B(k)$ and on the upper open paths that cross the rectangle $[\rho_k+2\rho_k/10,\rho_{k+1}-2\rho_k/10] \times [\rho_k/10,2\rho_k/10]$ and the rectangle $[-(\rho_k+2\rho_k/10),-(\rho_{k+1}-2\rho_k/10)] \times [\rho_k/10,2\rho_k/10]$. Write $\gamma_1$ and $\gamma_2$ for these two paths. Note that, if $\text{\textbf{A}}_4(\rho_k+10,\rho_{k+1})$ holds, then there is a $3$-arm event in the region of the annulus $[-(\rho_{k+1}-3\rho_k/10),\rho_{k+1}-3\rho_k/10]^2 \setminus (-(\rho_{k}+3\rho_k/10),\rho_k+3\rho_k/10)^2$ that is below $\gamma_1$ and $\gamma_2$, see Figure~\ref{pleindanneauxetchemins}. The percolation configuration in this region is not biased by the conditionnings.
Consequently, we can use~\eqref{alpha3+} to obtain that there exist two absolute constants $C_0,C_1<+\infty$ such that, for all $k \in \lbrace 1, \cdots, j-1 \rbrace$, we have:
\begin{equation}\label{3bras}
\Pro_{1/2} \left[ \text{\textbf{A}}_4(\rho_k + 10,\rho_{k+1}) \cond B(k) \right] \leq C_0 \left( \frac{\rho_k+3\rho_k/10}{\rho_{k+1}-3\rho_k/10} \right)^2 \leq C_1 \, M^{-2} \, .
\end{equation}

\begin{figure}[h!]
\centering
\includegraphics[scale=0.42]{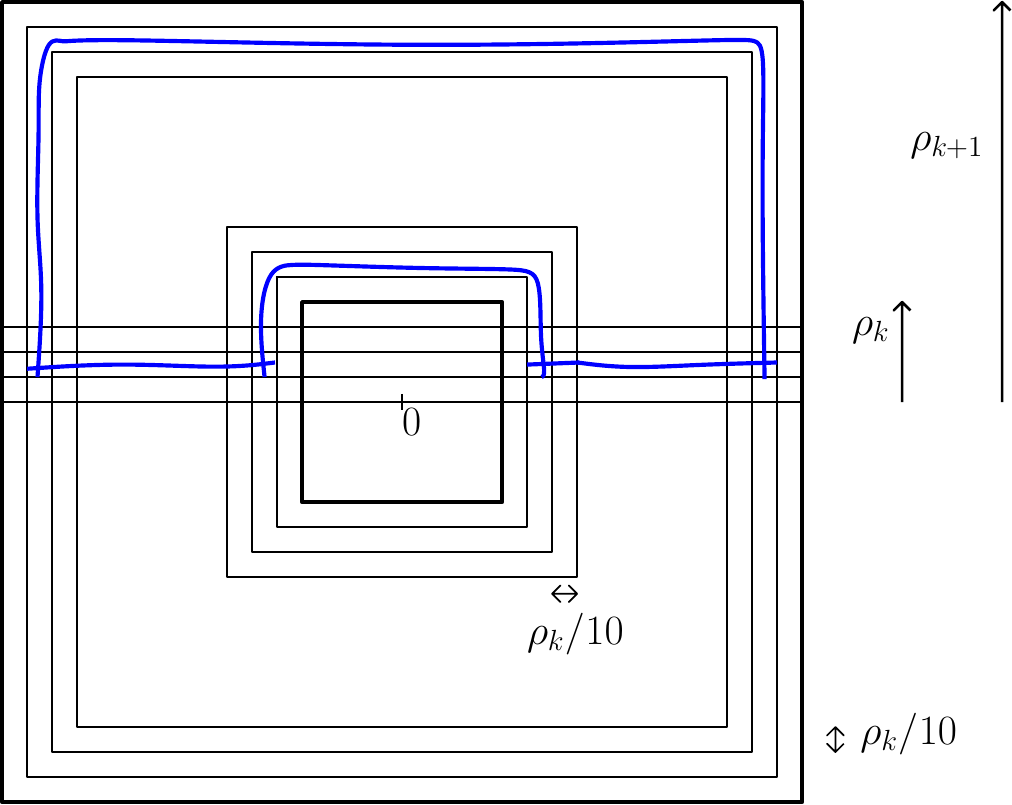}
\caption{A realization of the event $B(k)$.\label{pleindanneaux}}
\end{figure}

\begin{figure}[h!]
\centering
\includegraphics[scale=0.42]{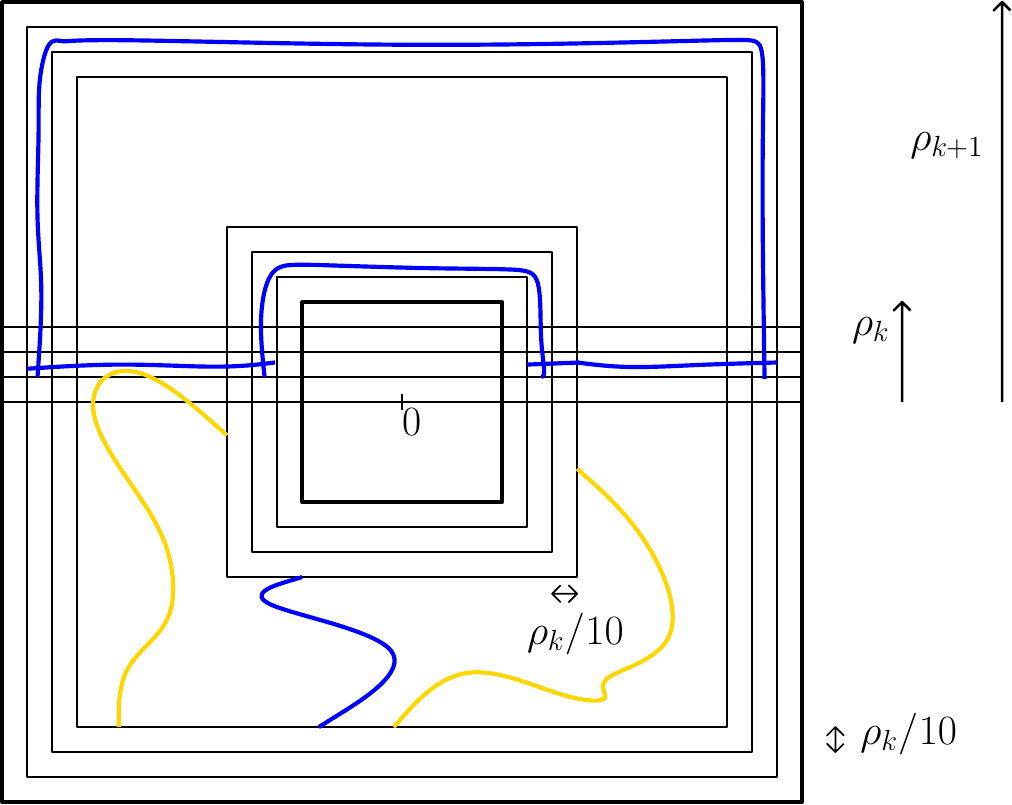}
\caption{A realization of the event $B(k)$ and of the $4$-arm event implies the realization of a $3$-arm event in a half-plane.\label{pleindanneauxetchemins}}
\end{figure}

Now, we simply use that $\lbrace \omega,\omega' \in \text{\textbf{A}}_4(r_1,r_2) \rbrace \subseteq \lbrace \omega \in \text{\textbf{A}}_4(r_1,r_2) \rbrace \cup \lbrace \omega' \in \text{\textbf{A}}_4(r_1,r_2) \rbrace$, and we choose to look only at $\omega$ in the annuli where $\omega \in B(k)$ and to look only at $\omega'$ in the other annuli. More precisely, for any $m \in \N$ and $i \leq l_1 < \cdots < l_m \leq j-1$, by spatial independence we have:
\begin{align*}
& \Pro \left[ \omega, \, \omega' \in \text{\textbf{A}}_4(r_1,r_2) \cond N=m, k_1 = l_1, \cdots, k_m = l_m \right] \leq \Pro \left[ \omega' \in \text{\textbf{A}}_4(r_1,\rho_{l_1}) \right]\\
& \times \prod_{q = 1}^{m-1} \left( \Pro \left[ \omega \in \text{\textbf{A}}_4(\rho_{l_q}+10,\rho_{l_q+1}) \cond \omega \in B(l_q) \right] \, \Pro \left[ \omega' \in \text{\textbf{A}}_4(\rho_{l_q+1}+10,\rho_{l_{q+1}}) \right] \right)\\
&\times \Pro \left[ \omega \in \text{\textbf{A}}_4(\rho_{l_m}+10,\rho_{l_m+1}) \cond \omega \in B(l_m) \right] \Pro \left[ \omega' \in \text{\textbf{A}}_4(\rho_{l_m+1}+10,r_2) \right] \, .
\end{align*}
Next,~\eqref{3bras} implies that the above is at most:
\begin{align*}
& \alpha_4(r_1,\rho_{l_1}) \, \prod_{q = 1}^{m-1} \left( \frac{C_1}{M^2} \, \alpha_4(\rho_{l_q+1}+10,\rho_{l_{q+1}}) \right) \, \frac{C_1}{M^2} \, \alpha_4(\rho_{l_m+1}+10,r_2)\\
& = \alpha_4(r_1,\rho_{l_1}) \, \prod_{q = 1}^{m-1} \left( \alpha_4(\rho_{l_q},\rho_{l_q+1}+10) \, \alpha_4(\rho_{l_q+1}+10,\rho_{l_{q+1}}) \, \frac{C_1}{M^2 \alpha_4(\rho_{l_q},\rho_{l_q+1}+10)} \right)\\
& \times \alpha_4(\rho_{l_m},\rho_{l_m+1}+10) \, \alpha_4(\rho_{l_m+1}+10,r_2)\frac{C_1}{M^2 \alpha_4(\rho_{l_m},\rho_{l_m+1}+10)} \, .
\end{align*}
The quasi-multiplicativity property implies that there exists a constant $C_2 < +\infty$ such that the above is at most:
\begin{equation}\label{lesmauvaisesechelles}
C_2 \, \alpha_4(r_1,r_2) \, \prod_{q=1}^m \frac{C_2}{M^2 \alpha_4(\rho_{l_q},\rho_{l_q+1})} \, .
\end{equation}
Thanks to the left-hand inequality of~\eqref{alpha4} we know that there exists some $\epsilon_2 > 0$ such that for all $l \in \N$:
\[
\alpha_4(\rho_{l},\rho_{l+1}) \geq \frac{1}{\epsilon_2} M^{-2+\epsilon_2} \, .
\]
We deduce that we can choose $100 \leq M < +\infty$ such that for all $l \in \N$:
\begin{equation}\label{truc}
\frac{C_2}{M^2\alpha_4(\rho_{l},\rho_{l+1})} \leq 1/2 \, .
\end{equation}
We fix such an $M$. Then,~\eqref{lesmauvaisesechelles} and~\eqref{truc} imply that:
\begin{equation}\label{tructruc}
\Pro \left[ \omega,\omega' \in \text{\textbf{A}}_4(r_1,r_2) \cond N=m, k_1 = l_1, \cdots, k_m = l_m \right] \leq C_2 \, \alpha_4(r_1,r_2) /2^m \, .
\end{equation}

\bigskip

Now, we write:
\begin{multline}
\Pro \left[ \omega, \omega' \in \text{\textbf{A}}_4(r_1,r_2) \right] \leq \Pro \left[ N \leq  a \log_M(r_2/r_1), \, \omega' \in \text{\textbf{A}}_4(r_1,r_2) \right]\\
+ \Pro \left[ N \geq a \log_M(r_2/r_1), \, \omega,\omega' \in \text{\textbf{A}}_4(r_1,r_2) \right] \, , \label{the2terms}
\end{multline}
where the constant $a=a(M)$ comes from the beginning of the proof.

By independence of $\omega$ and $\omega'$ on $\half^c$ we can say that the first term of the right-hand side of~\eqref{the2terms} equals:
\begin{eqnarray*}
\Pro \left[ N \leq  a \log_M(r_2/r_1) \right] \Pro \left[ \omega' \in \text{\textbf{A}}_4(r_1,r_2) \right] & \leq  &\frac{1}{a}(1-a)^{\log_M(r_2/r_1)} \, \alpha_4(r_1,r_2)\\
& \leq & \frac{C}{2} \, \left( \frac{r_2}{r_1} \right)^{-1/C} \, \alpha_4(r_1,r_2) \, ,
\end{eqnarray*}
for some $C < +\infty$.

Thanks to~\eqref{tructruc} we know that the second term of the right-hand side of~\eqref{the2terms} is also less than or equal to $\frac{C}{2} \, \left( \frac{r_2}{r_1} \right)^{-1/C} \alpha_4(r_1,r_2)$ if $C$ is sufficiently large. And the proof is over thanks to~\eqref{stuce}.
\end{proof}

\begin{lem}\label{exponenthalfplane}
For site percolation on $\T$, there exists $\zeta_4^{|\half} \in (5/4,5/2]$ such that:
\[
\beta_4^{|\H}(r_1,r_2):= \E_{1/2} \left[ \Pro_{1/2} \left[ \text{\textup{\bf{A}}}_4(r_1,r_2) \cond \mathcal{F}_{\half} \right]^2 \right] = \left( \frac{r_1}{r_2} \right)^{\zeta_4^{|\half}+\petito{1}} \, ,
\]
where $r_2 \geq r_1 \geq 1$ and $\petito{1} \rightarrow 0$ as $r_1/r_2 \rightarrow 0$.
\end{lem}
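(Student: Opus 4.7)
The plan is to view $\beta_4^{|\H}(r_1,r_2)$ as the probability of a single event under the coupled law of a pair $(\omega,\omega')$ described in the proof of Lemma~\ref{lemmerigolo}: $\omega\sim\Pro_{1/2}$ and $\omega'$ is obtained by keeping $\omega'\equiv\omega$ on $\half$ while resampling independently on $\half^c$. The key observation is that the joint law of $(\omega,\omega')$ is a \emph{product measure over sites} (with a two-valued distribution on $\half$ and a four-valued one on $\half^c$). In particular, the joint event $\{\omega,\omega'\in\text{\textbf{A}}_4(r_1,r_2)\}$ is determined by the configuration restricted to the annulus $[-r_2,r_2]^2\setminus(-r_1,r_1)^2$, and events localised in disjoint annuli are genuinely independent.

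The two easy one-sided exponent bounds come first. The lower bound $\zeta_4^{|\half}\leq 5/2$ follows from Jensen's inequality:
\[
\beta_4^{|\H}(r_1,r_2) \;=\; \E_{1/2}\!\left[\Pro_{1/2}\left[\text{\textbf{A}}_4(r_1,r_2)\mid\mathcal{F}_\half\right]^2\right] \;\geq\; \alpha_4(r_1,r_2)^2 \;=\;(r_1/r_2)^{5/2+\petito{1}}.
\]
The strict inequality $\zeta_4^{|\half}>5/4$ is exactly the content of Lemma~\ref{lemmerigolo}, which yields $\beta_4^{|\H}(r_1,r_2)\leq C\,\alpha_4(r_1,r_2)(r_1/r_2)^{1/C}$ with an explicit exponent gain $1/C>0$.

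To define the exponent itself, the plan is to establish quasi-multiplicativity
\[
c\,\beta_4^{|\H}(r_1,r_2)\,\beta_4^{|\H}(r_2,r_3) \;\leq\; \beta_4^{|\H}(r_1,r_3) \;\leq\; \beta_4^{|\H}(r_1,r_2)\,\beta_4^{|\H}(r_2,r_3),
\]
for some universal $c>0$ and all $1\leq r_1\leq r_2\leq r_3$. Sub\-multiplicativity (the right inequality) is immediate: $\text{\textbf{A}}_4(r_1,r_3)\subseteq \text{\textbf{A}}_4(r_1,r_2)\cap \text{\textbf{A}}_4(r_2,r_3)$ for each of $\omega$ and $\omega'$, and the two annular events are independent under the product joint law. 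Super\-multiplicativity is the standard gluing statement, which I would obtain by adapting Kesten's separation-of-arms argument to the coupled setting: conditionally on $\text{\textbf{A}}_4(r_1,r_2)$ in both $\omega$ and $\omega'$, one shows that with positive probability the four arms of alternating colours are ``well-separated'' on the circle of radius $r_2$ simultaneously in $\omega$ and in $\omega'$, and similarly for $\text{\textbf{A}}_4(r_2,r_3)$ on the same circle. One can then use RSW and FKG-type manoeuvres in a thin scale-$r_2$ annulus (decomposing into the contributions of $\half$ and $\half^c$, where on $\half^c$ the configurations $\omega|_{\half^c}$ and $\omega'|_{\half^c}$ are independent) to connect these arms through a ``hub'' that realises $\text{\textbf{A}}_4(r_1,r_3)$ for both configurations. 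Once quasi-multiplicativity is in place, the function $L(\rho):=-\log\beta_4^{|\H}(1,2^\rho)$ is, up to an additive constant, both sub- and super-additive in $\rho$, so $L(\rho)/\rho$ converges to a limit $\zeta_4^{|\half}$; combined with the polynomial sandwich from the previous paragraph, this yields the desired asymptotic $\beta_4^{|\H}(r_1,r_2)=(r_1/r_2)^{\zeta_4^{|\half}+\petito{1}}$ with $\zeta_4^{|\half}\in(5/4,5/2]$.

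The main obstacle is the supermultiplicative gluing step: while the joint law is a pleasant product measure, one must simultaneously achieve well-separated four-arm landings in two different (partially coupled) configurations. The asymmetry between $\half$ (where $\omega$ and $\omega'$ are forced to agree) and $\half^c$ (where they are independent) means that the usual ``colour-switch in a box'' trick has to be carried out carefully near the horizontal axis, and the hub construction has to respect this asymmetry. I expect, however, that the standard Kesten/Nolin machinery (see e.g.~\cite{kesten1987scaling, nolin2008near} as already invoked in Subsection~\ref{sectionarm}) goes through at the cost of some additional RSW bookkeeping, since the arguments are essentially deterministic once a positive-probability separation event is produced.
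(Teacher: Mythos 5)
Your Jensen and Lemma~\ref{lemmerigolo} bounds for $\zeta_4^{|\half}\in(5/4,5/2]$ are correct, the coupled-pair/product-measure viewpoint matches the paper's formula~\eqref{stuce}, and the quasi-multiplicativity sketch is essentially Proposition~$5.1$ of~\cite{GPS} (with $W=\half^c$). But there is a genuine gap in the step where you conclude the exponent exists: quasi-multiplicativity alone does \emph{not} give sub- or super-additivity of $L(\rho)=-\log\beta_4^{|\H}(1,2^\rho)$. Quasi-multiplicativity tells you
\[
\beta_4^{|\H}(1,2^{\rho_1+\rho_2}) \;\asymp\; \beta_4^{|\H}(1,2^{\rho_1})\,\beta_4^{|\H}(2^{\rho_1},2^{\rho_1+\rho_2}),
\]
and the rightmost factor is \emph{not} the same as $\beta_4^{|\H}(1,2^{\rho_2})$: there is no scale invariance at the lattice level, so Fekete's lemma does not apply. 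This is exactly why, for bond percolation on $\Z^2$, quasi-multiplicativity of $\alpha_4$ is known but the value (indeed the existence) of the four-arm exponent is not. What quasi-multiplicativity does give you, once you additionally know that $\beta_4^{|\H}(\lambda r,\lambda R)$ converges as $\lambda\to+\infty$, is the promotion of that scaling-limit convergence into the stated $(r_1/r_2)^{\zeta+\petito{1}}$ asymptotic. So the missing ingredient is precisely the convergence $\lim_{\lambda\to\infty}\beta_4^{|\H}(\lambda r,\lambda R)$ to a continuum quantity, which the paper obtains from the Schramm--Smirnov scaling limit of percolation on $\T$ together with a mesh-independent gluing property for the $4$-arm event (following Theorem~$10.1$ and Proposition~$10.3$ of~\cite{GPS} and results of~\cite{SS11,GPS2a}). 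Your plan needs to incorporate this step explicitly; without it, the sub/super-additivity claim is false and the argument does not produce an exponent, only the polynomial sandwich you already have.
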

\begin{proof}
We shall only sketch the proof here. 
To prove that this exponent exists on $\T$ (without necessarily computing its value), one proceeds as with classical exponents which describe critical percolation in two steps (see \cite{lawler2002onearm, smirnov2001critical, werner2007lectures}):
\bnum
\item First, one needs to show that for any fixed $r<R$, the quantity $\beta_4^{|\H}(\lambda r, \lambda R)$ converges as $\lambda\to +\infty$ to a limiting real number which is expressible in terms of the continuum scaling limit of percolation. For usual {\em arm-exponents}, these limiting numbers are given by $\SLE_6$ computable quantities. In the present case, these limiting real numbers are instead described in terms of the continuum scaling limit of percolation introduced by Schramm-Smirnov \cite{SS11}. The proof follows very similar lines as the proof of Theorem~$10.1$ in \cite{GPS}. Let us be a little more precise here: In order to prove Theorem~$10.1$ in~\cite{GPS}, two results are used: \textit{(a)} the existence and uniqueness of the continuum scaling limit of percolation (see Subsection~$2.3$ of~\cite{GPS2a} for the uniqueness part) and \textit{(b)} a ``mesh independent gluing property for crossing of quads" which is Proposition~$10.3$ of~\cite{GPS} and Proposition~$4.1$ of~\cite{SS11}. The only difference in our case is that we need a gluing property for the $4$-arm event instead of the crossing of quads. Such a result follows easily from the gluing properties for crossing of quads, and from results about the scaling limit of arm events from Subsection~$2.4$ of~\cite{GPS2a} (see in particular~$(2.3)$ of this last paper). 

\item Then one needs to prove that the quantity $\beta_4^{|\H}(r_1,r_2)$ statisfies a \textbf{quasi-multiplicativity property} (see \cite{werner2007lectures}). This is Proposition~$5.1$ of~\cite{GPS} (with $W = \half^c$).
\enum
Once $\zeta_4^{|\half}$ is proved to exist, the fact that it belongs to $(5/4,5/2]$ follows directly from Lemma~\ref{lemmerigolo} and the computation of the critical exponents.
\end{proof}

%
%

\addcontentsline{toc}{section}{Bibliography}

\bibliographystyle{alpha}

\begin{thebibliography}{}

\end{thebibliography}


\begin{thebibliography}{GPS13b}

\bibitem[BGS13]{BGS}
Erik~I Broman, Christophe Garban, and Jeffrey~E. Steif.
\newblock Exclusion sensitivity of Boolean functions.
\newblock {\em Probability theory and related fields}, 155(3-4):621--663, 2013.

\bibitem[BKS99]{BKS}
Itai Benjamini, Gil Kalai, and Oded Schramm.
\newblock Noise sensitivity of {B}oolean functions and applications to
  percolation.
\newblock {\em Inst. Hautes \'Etudes Sci. Publ. Math.}, (90):5--43 (2001),
  1999.

\bibitem[BR06]{bollobas2006percolation}
B{\'e}la Bollob{\'a}s and Oliver Riordan.
\newblock {\em Percolation}.
\newblock Cambridge University Press, New York, 2006.

\bibitem[FvdH15]{fitzner2015nearest}
Robert Fitzner and Remco van~der Hofstad.
\newblock Mean-field behavior for nearest-neighbour percolation in $d> 10$.
\newblock {\em Preprint}, 2015.

\bibitem[For15a]{For15a}
Malin Pal{\"o} Forsstr{\"o}m.
\newblock A Noise Sensitivity Theorem for Schreier Graphs.
\newblock {\em arXiv preprint arXiv:1501.01828}, 2015.

\bibitem[For15b]{For15b}
Malin Pal{\"o} Forsstr{\"o}m.
\newblock Monotonicity properties of exclusion sensitivity.
\newblock {\em arXiv preprint arXiv:1503.05735}, 2015.

\bibitem[GPS10]{GPS}
Christophe Garban, G{\'a}bor Pete, and Oded Schramm.
\newblock The Fourier spectrum of critical percolation.
\newblock {\em Acta Mathematica}, 205(1):19--104, 2010.

\bibitem[GPS13a]{GPS2a}
Christophe Garban, G{\'a}bor Pete, and Oded Schramm.
\newblock Pivotal, cluster, and interface measures for critical planar
  percolation.
\newblock {\em Journal of the American Mathematical Society}, 26(4):939--1024,
  2013.

\bibitem[GPS13b]{GPS2b}
Christophe Garban, G{\'a}bor Pete, and Oded Schramm.
\newblock The scaling limits of near-critical and dynamical percolation.
\newblock To appear in {\em Journal of the European Mathematical Society.} 
\newblock {\em arXiv:1305.5526}, 2013.

\bibitem[Gri99]{grimmett1999percolation}
Geoffrey~R. Grimmett.
\newblock {\em Percolation (Grundlehren der mathematischen Wissenschaften)}.
\newblock Springer: Berlin, Germany, 1999.

\bibitem[GS14]{book}
Christophe Garban and Jeffrey Steif.
\newblock {\em Noise sensitivity of Boolean functions and percolation}.
\newblock Cambridge University Press, 2014.

\bibitem[Har60]{harris1960lower}
Theodore~E. Harris.
\newblock A lower bound for the critical probability in a certain percolation
  process.
\newblock {\em Proc. Cambridge Philos. Soc.} 56(01):13--20, 1960.

\bibitem[Har78]{Harris}
Theodore~E. Harris.
\newblock Additive set-valued Markov processes and graphical methods.
\newblock {\em Ann. Probability.} 6, no. 3, 355--378, 1978. 

\bibitem[HPS97]{olle1997dynamical}
Olle H{\"a}ggstr{\"o}m, Yuval Peres, and Jeffrey~E. Steif.
\newblock Dynamical percolation.
\newblock {\em Ann. Inst. H. Poincar\'e Probab. Statist.}, 33(4):497--528, 1997.

\bibitem[HPS15]{HPS15}
Alan Hammond, G{\'a}bor Pete, and Oded Schramm.
\newblock Local time on the exceptional set of dynamical percolation and the incipient infinite cluster.
\newblock {\em Ann. Probab.}, 43 (6):2949--3005, 2015.

\bibitem[HS94]{hara1994mean}
Takashi Hara and Gordon Slade.
\newblock Mean-field behaviour and the lace expansion.
\newblock In {\em Probability and phase transition ({C}ambridge, 1993)}, volume
  420 of {\em NATO Adv. Sci. Inst. Ser. C Math. Phys. Sci.}, 87--122.
  Kluwer Acad. Publ., Dordrecht, 1994.

\bibitem[Kes80]{kesten1980critical}
Harry Kesten.
\newblock The critical probability of bond percolation on the square lattice
  equals $1/2$.
\newblock {\em Comm. Math. Phys. 74, no. 1}, 1980.

\bibitem[Kes87]{kesten1987scaling}
Harry Kesten.
\newblock Scaling relations for 2d-percolation.
\newblock {\em Comm. Math. Phys.}, 109(1):109--156, 1987.

\bibitem[KZ87]{kesten1987strict}
Harry Kesten and Yu~Zhang.
\newblock Strict inequalities for some critical exponents in two-dimensional
  percolation.
\newblock {\em Journal of statistical physics}, 46(5-6):1031--1055, 1987.

\bibitem[Lig05]{liggett2005interacting}
Thomas~M. Liggett.
\newblock {\em Interacting particle systems}.
\newblock Classics in Mathematics. Springer-Verlag, Berlin, 2005.
\newblock Reprint of the 1985 original.

\bibitem[LSW02]{lawler2002onearm}
Gregory~F. Lawler, Oded Schramm, and Wendelin Werner.
\newblock One-arm exponent for critical 2{D} percolation.
\newblock {\em Electron. J. Probab.}, 7:no. 2, 13 pp. (electronic), 2002.

\bibitem[Nol08]{nolin2008near}
Pierre Nolin.
\newblock Near-critical percolation in two dimensions.
\newblock {\em Electron. J. Probab}, 13(55):1562--1623, 2008.

\bibitem[PS98]{peres1998number}
Yuval Peres and Jeffrey E. Steif.
\newblock The number of infinite clusters in dynamical percolation.
\newblock {\em Probab. Theory Related Fields}, 111, pp 141--165, 1998

\bibitem[Smi01]{smirnov2001criticalp}
Stanislav Smirnov.
\newblock Critical percolation in the plane: Conformal invariance, Cardy's
  formula, scaling limits.
\newblock {\em Comptes Rendus de l'Acad{\'e}mie des Sciences-Series
  I-Mathematics}, 333(3):239--244, 2001.

\bibitem[SS10]{SS10}
Oded Schramm and Jeffrey~E. Steif.
\newblock Quantitative noise sensitivity and exceptional times for percolation.
\newblock {\em Ann. of Math. (2)}, 171(2):619--672, 2010.


\bibitem[SS11]{SS11}
Oded Schramm and Stanislav Smirnov.
\newblock On the scaling limits of planar percolation.
\newblock With an appendix by Christophe Garban.
{\em Ann. Probab.} 39, no. 5, 1768--1814, 2011.

\bibitem[SW01]{smirnov2001critical}
Stanislav Smirnov and Wendelin Werner.
\newblock Critical exponents for two-dimensional percolation.
\newblock {\em Math. Res. Lett.}, 8(5-6):729--744, 2001.

\bibitem[Wer07]{werner2007lectures}
Wendelin Werner.
\newblock Lectures on two-dimensional critical percolation.
\newblock {\em IAS Park City Graduate Summer School}, 2007.

\end{thebibliography}

\ \\
{\bf Christophe Garban}\\
Universit\'e Lyon 1\\
\url{http://math.univ-lyon1.fr/~garban/}\\
garban@math.univ-lyon1.fr\\
Supported by the ANR grant Liouville 15-CE40-0013 and the ERC grant LiKo 676999 \\

\medskip
\ni
{\bf Hugo Vanneuville} \\
Universit\'e Lyon 1\\
\url{http://math.univ-lyon1.fr/~vanneuville/}\\
vanneuville@math.univ-lyon1.fr\\
Supported by the ANR grant Liouville 15-CE40-0013 and the ERC grant LiKo 676999

\end{document}